\setlist[itemize]{align = parleft,left = 4pt..1.5em}
\setlist[enumerate]{align = parleft,left = 4pt..2.0em}
\newcommand{\beq}{\begin{equation}}
\newcommand{\eeq}{\end{equation}}
\newcommand{\bea}{\begin{eqnarray}}
\newcommand{\eea}{\end{eqnarray}}
\newcommand{\beas}{\begin{eqnarray*}}
\newcommand{\eeas}{\end{eqnarray*}}
\newcommand{\R}{\mathbb{R}}
\renewcommand{\P}{\mathbb{P}}
\newcommand{\E}{\mathbb{E}}
\newcommand{\myMod}[1]{\left\llbracket #1\right\rrbracket}
\newcommand{\myDist}[1]{\mathop{\mathrm{dist}}\!\left(#1\right)}
\newcommand{\Norm}[1]{\left|\left|  #1   \right|\right|}
\newcommand{\InPrd}[1]{\left\langle #1 \right\rangle}
\newcommand{\dom}{\mathbb{T}}
\newcommand{\ud}{\ensuremath{\mathrm{d}}}
\providecommand{\customgenericname}{}
\newcommand{\newcustomtheorem}[2]{%
  \newenvironment{#1}[1]
  {%
   \renewcommand\customgenericname{#2}%
   \renewcommand\theinnercustomgeneric{##1}%
   \innercustomgeneric
  }
  {\endinnercustomgeneric}
}
\theoremstyle{plain}
\newtheorem{theorem}{Theorem}[section]
\newtheorem{lemma}[theorem]{Lemma}
\newtheorem{proposition}[theorem]{Proposition}
\theoremstyle{definition}
\newtheorem{definition}[theorem]{Definition}
\newtheorem{assumption}{Assumption}
\newtheorem{remark}[theorem]{Remark}
\newtheorem{example}[theorem]{Example}
\numberwithin{hypothesis}{section}
\numberwithin{assumption}{section}
\def\me{{\mathbb  E}}
\def\mr{{\mathbb  R}}
\def\mz{{\mathbb  Z}}
\def\mt{{\mathbb T}}
\title[]{Parabolic Anderson model with colored noise on torus}
\author[L.~Chen]{Le Chen}
\address{L.~Chen: Department of Mathematics and Statistics, Auburn, AL, 36849, USA.}
\email{\url{le.chen@auburn.edu}}
\author{Cheng Ouyang}
\address{C.~Ouyang: Department of Mathematics, Statistics, and Computer Science, University of Illinois at Chicago, Chicago, IL, 60607, USA.}
\email{\url{couyang@uic.edu}}
\author{William Vickery}
\address{W.~Vickery: Department of Mathematics, Statistics, and Computer Science, University of Illinois at Chicago, Chicago, IL, 60607, USA.}
\email{\url{wvicke2@uic.edu}}
\subjclass[2010]{Primary 60H15, Secondary: 60G60,~37H15}
\keywords{
  Stochastic heat equation on torus,
  Dalang's condition,
  measure-valued initial condition,
  Brownian bridge,
  moment asymptotics,
  intermittency,
  moment Lyapunov exponent,
  theta function.
}
\begin{document}
\maketitle

\begin{abstract}
  We construct an intrinsic family of Gaussian noises on $d$-dimensional flat
  torus $\mathbb{T}^d$. It is the analogue of the colored noise on
  $\mathbb{R}^d$, and allows us to study stochastic PDEs on torus in the It\^{o}
  sense in high dimensions. With this noise, we consider the \textit{parabolic
  Anderson model} (PAM) with measure-valued initial conditions and establish
  some basic properties of the solution, including a sharp upper and lower bound
  for the moments and H\"{o}lder continuity in space and time. The study of the
  toy model of $\mathbb{T}^d$ in the present paper is a first step towards our
  effort in understanding how geometry and topology play an role in the behavior
  of stochastic PDEs on general (compact) manifolds.
\end{abstract}

\tableofcontents

\section{Introduction}

In this paper, we construct an intrinsic family of Gaussian noises on
$d$-dimensional torus $\mathbb{T}^d \coloneqq [-\pi,\pi]^d$ that is
\textit{colored} in space and white in time. It is the analogue of the colored
noise on $\mathbb{R}^d$ and it enables one to study, in It\^{o}'s sense,  the
\textit{parabolic Anderson model} (PAM, see~\eqref{E:PAM} below) and other
\textit{stochastic partial differential equations} (SPDEs) on $\mathbb{T}^d$ in
higher dimensions. In this setting, we aim at understanding how topology and
geometry of non-Euclidean spaces influence the behavior of the solution.

More specifically, let $G(t,x)$ be the heat kernel on $\mathbb{T}^d$, i.e.,
\begin{align}\label{E:HeatKernel}
  G(t,x) \coloneqq \sum_{k\in \mathbb{Z}^d} \prod_{i=1}^d p\left(t,x_i + 2\pi k_i\right)
  \quad \text{for all $t>0$ and $x = (x_1,\cdots, x_d) \in \mathbb{T}^d$,}
\end{align}
where we use $p(t,x) \coloneqq (2\pi t)^{-1/2} e^{-x^2/(2t)}$ for $x\in\R$ to
denote the heat kernel on $\R$. Sometimes, we use a subscript $d$ to denote the
dimension of the space, e.g., $p_{d}(t,x) \coloneqq (2\pi t)^{-d/2}
e^{-\frac{|x|^2}{2t}}$ for $x\in\R^d$ where $|x|=\sqrt{x_1^2+\cdots+x_d^2}$.
Alternatively, $G_d(t,x)$ in~\eqref{E:HeatKernel} can be expressed as
\begin{align*}
  G_d(t,x)= \prod_{i=1}^{d} G_1(t,x_i).
\end{align*}
When there is no confusion from context, this subscript $d$ will be omitted.
\bigskip

Fix $\rho\geq0$ and $\alpha>0$. The spatial covariance function of our colored
noise will be given by
\begin{gather}\label{E:f}
  f_{\alpha,\rho}(x) \coloneqq \frac{\rho}{(2\pi)^d} + \frac{1}{\Gamma(\alpha)}\int_0^\infty t^{\alpha-1}\left(G(t,x)-\frac{1}{(2\pi)^d}\right) \ud t, \quad \text{for $x\in \mathbb{T}^d$.}
\end{gather}
The function $f_{\alpha,\rho}$ is an analogue of the Riesz kernel on
$\mathbb{R}^d$, and is related to the spectral decomposition of the Laplacian on
$\mathbb{T}^d$ (see Section~\ref{S:Noise} for more details). Moreover, it is
well understood that for all $\alpha>0$ and $\rho\ge 0$, there exists some
constant $C>0$ such that $f_{\alpha,\rho}$ admits the following estimate (see,
e.g., Lemma~2.9 of~\cite{brosamler:83:laws})
\begin{align} \label{E:fEst}
  \left|f_{\alpha,\rho}(x)\right| \leq
  \begin{dcases}
    C                          & \text{if $\alpha>d/2$} \\
    C\left(1+\log^- |x|\right) & \text{if $\alpha=d/2$} \\
    C|x|^{-d+2\alpha}          & \text{if $\alpha<d/2$}
  \end{dcases},
  \quad \text{for all $x\in \mathbb{T}^d$},
\end{align}
where $\log^- t \coloneqq \max(0,-\log t)$. The above estimate implies that the
colored noise is smoother than the white noise. In addition, the parameter
$\rho$ controls the level of $f_{\alpha,\rho}$ while $\alpha$ controls its
regularity. In what follows, we adopt the following convention:
\begin{align}\label{E:f_alpha}
  f_\alpha(x) \coloneqq f_{\alpha,0}(x) \quad \text{and} \quad
  f(x) \coloneqq f_{1}(x).
\end{align}
It is known that $f(x)$, the Green's function of the Laplace on $\mathbb{T}^d$,
is not positive. The same can be said to our covariance function
$f_{\alpha,\rho}(x)$ (see Lemma~\ref{L:Cov} below). Throughout the paper, we use
the convention that
\begin{align}\label{E:Convention}
  G(t,x,y) = G\left(t,\myMod{x-y}\right) \quad \text{and} \quad
  f_{\alpha,\rho}(x,y) = f_{\alpha,\rho}(\myMod{x-y}),
\end{align}
where for $x = (x_1,\cdots,x_d) \in\R^d$, $\myMod{x} \coloneqq
\left(\myMod{x_1},\cdots, \myMod{x_n}\right)$ with $\myMod{x_i} = \text{mod}(x_i
+ \pi, 2\pi) - \pi$, i.e., $\myMod{x_i}$ is the \textit{signed
remainder}\footnote{Here we use the same convention as \textit{Wolfram
  Mathematica} that the $\text{mod}$ function always returns the positive
  reminder, i.e., $\text{mod}(x,2\pi) = c$ iff $c\in [0,2\pi)$ and $x = 2\pi d +
  c$ for some $d\in \mathbb{Z}$. Alternatively, $\myMod{x}$ can be equivalently
expressed as $\myMod{x} \coloneqq \text{mod}(x,2\pi,-\pi)$, where
$\text{mod}(m,n,d)$ is the same modulo function with an offset $d$ as that in
\textit{Wolfram Mathematica}.} of $x_i$ when divided by $\pi$. Note that
$\left|\myMod{x-y}\right|$ is the distance between $x$ and $y$ on torus, namely,
$\myDist{x,y} \coloneqq \left|\myMod{x-y}\right|$.

With the colored noise in hand described as above, we consider the following
SPDE, or PAM, on $\mathbb{T}^d$
\begin{align}\label{E:PAM}
  \begin{dcases}
    \partial_t u(t,x) = \frac{1}{2}\triangle u(t,x)+ \lambda u(t,x) \dot{W}(t,x), & (t,x)\in (0,\infty)\times\mt^d,\\
    u(0,\cdot) = \mu,
  \end{dcases}
\end{align}
where $\dot{W}(t,x)$ is a centered Gaussian noise white in time and colored in
space, and $\lambda \ne 0$ is a constant that controls the level of the noise.
We assume that the initial condition $\mu$ is a finite (nonnegative) measure on
$\mathbb{T}^d$, namely,
\begin{align}\label{E:Cmu}
  C_\mu \coloneqq \int_{\mathbb{T}^d} \mu(\ud x) <\infty.
\end{align}

The solution to~\eqref{E:PAM} is interpreted as the integral equation or the
\textit{mild solution} (see Definition~\ref{D:Sol} below for more details)
\begin{equation}\label{E:mild-sol}
  u(t, x) = J_0(t,x) + \lambda \int_0^t \int_{\dom^d} G(t-s, x, y) u(s, y) W(\ud s, \ud y) \quad \text{a.s.},
\end{equation}
where the stochastic integral is in the It\^o/Walsh sense and $J_0(t,x)$ refers
to the solution to the homogeneous equation, namely,
\begin{align}\label{E:J0}
  J_0(t,x) \coloneqq \int_{\dom^d} G(t, x, y) \mu(\ud y).
\end{align}

\begin{assumption}[Dalang's condition]\label{A:Dalang}
  We assume that the correlation function $f_{\alpha,\rho}(\cdot)$ satisfies the
  \begin{align}\label{E:Dalang}
    \sum_{k\in \mz^d_*}\frac{\mathcal{F}(f_{\alpha,\rho})(k)}{|k|^{2}}<\infty
    \quad \Longleftrightarrow \quad
    2(\alpha + 1) > d \quad  \textnormal{(thanks to~\eqref{E:theta})},
  \end{align}
  where $\mz^d_*\coloneqq \mathbb{Z}^d \setminus \{0\}$.
\end{assumption}
\begin{remark}
  \textit{Dalang's condition} usually refers to the condition on the correlation
  function $f$ such that the corresponding stochastic partial differential
  equation (SPDE) with an additive noise has a unique solution;
  see~\cite{dalang:99:extending}. For most parabolic SPDEs, Dalang's condition
  is usually the necessary and sufficient condition for the existence of a
  unique solution when the noise is of multiplicative type. In the current
  setting, Dalang's condition~\eqref{E:Dalang} regulates the high frequencies of
  $f_{\alpha,\rho}$ in the Fourier mode, which in turn controls the singularity
  of $f_{\alpha,\rho}(x)$ at $x=0$ in the direct mode (see~\eqref{E:fEst}). We
  also would like to mention that Dalang's condition is only a condition on
  $\alpha$, but not $\rho$.
\end{remark}

\begin{remark}
  The solution theory of~\eqref{E:PAM} is rather straightforward when
  $\alpha>d/2$, since the covariance function $f_{\alpha,\rho}$ is bounded and
  continuous in this case (see~\eqref{E:fEst}). We hence assume $\alpha<d/2$ in
  the rest of our discussion.
\end{remark}

Our first main result is summarized in the following theorem:

\begin{theorem}\label{T:Main}
  Suppose that the correlation function $f_{\alpha,\rho}(\cdot)$ satisfies
  Dalang's condition~\eqref{E:Dalang} {for some $\alpha\in
  (0,d/2)$}. Then there exists a unique solution $u$ to~\eqref{E:PAM} starting
  from a finite measure $\mu$ on $\mathbb{T}^d$. Moreover, the solution
  satisfies the following properties
  \begin{enumerate}
    \item For all $t>0$ and $x, x'\in \dom^d$, it holds that
      \begin{align}\label{E:TwoPoint}
        \E\left(u(t,x)u(t,x')\right)
        = J_0(t,x)J_0(t,x') + \lambda^{-2} \iint_{\dom^{2d}}\mu(\ud z) \mu(\ud z') \mathcal{K}_\lambda(t-s, x, z, x', z')
      \end{align}
      where $\mathcal{K}$ is defined in Definition~\ref{D:Ln}.
    \item For all $t>0$, $x\in \dom^d$, and $p\ge 2$, it holds that
      \begin{align}\label{E:p-Moments}
        \Norm{u(t,x)}_p \le \sqrt{2} J_0(t,x) \left[H_{4 \lambda \sqrt{p}}(t)\right]^{1/2} ,
      \end{align}
      where the function $H_\lambda(t)$ is defined in~\eqref{E:Ht}. Moreover,
      when $\lambda^2 p$ is large enough, there exists some constant $C>0$ such
      that
      \begin{align}\label{E:p-Mom-Rate}
        \Norm{u(t,x)}_p \le C J_0(t,x) \exp\left(C \lambda^{\max\left(\frac{4}{2(1+\alpha)-d},\,2\right)} p^{\max\left(\frac{2}{2(1+\alpha)-d},\,1\right)}t\right),
      \end{align}
      for all $(t,x,p)\in (0,\infty)\times\dom^d\times [2,\infty)$.
  \end{enumerate}
\end{theorem}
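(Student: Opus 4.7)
For existence and uniqueness, I would run Picard iteration in $L^{2}(\Omega)$ in the Walsh/Dalang style. Setting $u^{(0)}(t,x):=J_{0}(t,x)$ and defining $u^{(n+1)}$ by substituting $u^{(n)}$ into the right-hand side of~\eqref{E:mild-sol}, Walsh's isometry reduces the $L^{2}$ Cauchy estimate to the quantity $\int_{0}^{t}\Phi(s)\mathrm{d}s$ with
\[
  \Phi(s):=\sup_{x}\iint_{\mathbb{T}^{2d}} G(s,x,y)G(s,x,y')f_{\alpha,\rho}(y-y')\,\mathrm{d}y\,\mathrm{d}y'.
\]
Expanding on the Fourier basis of $\mathbb{T}^{d}$ rewrites $\int_{0}^{t}\Phi(s)\mathrm{d}s$ as $\mathcal{F}(f_{\alpha,\rho})(0)\,t + \sum_{k\in\mathbb{Z}_{*}^{d}}\mathcal{F}(f_{\alpha,\rho})(k)\bigl(1-e^{-t|k|^{2}}\bigr)/|k|^{2}$, which is finite by Dalang's condition~\eqref{E:Dalang} and vanishes as $t\to 0$; a Gronwall/fixed-point argument then yields convergence of $(u^{(n)})$ and uniqueness of the mild solution.

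For the two-point formula~\eqref{E:TwoPoint}, I would iterate~\eqref{E:mild-sol} to the Wiener chaos expansion $u(t,x)=\sum_{n\ge 0}\lambda^{n} I_{n}(\widetilde{f}_{n}(\cdot\,;t,x))$, whose kernels $\widetilde f_{n}$ are built from $n$-fold convolutions of $G$ against $\mu$. Orthogonality of chaoses expresses $\E[u(t,x)u(t,x')]$ as $J_{0}(t,x)J_{0}(t,x')$ plus a sum of $n$-fold pairings against $f_{\alpha,\rho}^{\otimes n}$; after a Picard-type resummation this coincides with the kernel $\mathcal{K}_{\lambda}$ of Definition~\ref{D:Ln}, giving~\eqref{E:TwoPoint}. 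The same expansion, together with direct $L^{2}$ estimates of the kernels, yields the $p=2$ case of~\eqref{E:p-Moments}.

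For higher moments I would use Gaussian hypercontractivity chaos by chaos: an element of the $n$-th chaos has $L^{p}$-norm at most $(p-1)^{n/2}$ times its $L^{2}$-norm. Inserted into the expansion above, this is equivalent to replacing $\lambda$ by $\lambda\sqrt{p-1}$ in the second-moment bound, and after accounting for the product-to-diagonal expansion in the definition of $H_{\lambda}$ the net scaling becomes $4\lambda\sqrt{p}$, which is exactly~\eqref{E:p-Moments}. The explicit rate~\eqref{E:p-Mom-Rate} then follows from the $\lambda\to\infty$ asymptotics of $H_{\lambda}(t)$: using a Feynman-Kac/Brownian-bridge representation together with the Riesz-type singularity $f_{\alpha,\rho}(x)\asymp|x|^{-d+2\alpha}$ from~\eqref{E:fEst} and Brownian scaling, the log-Lyapunov exponent is of order $\lambda^{4/(2(1+\alpha)-d)}$ in the singular range $\alpha<d/2$ (this exceeds $2$, hence matches the $\max$ in~\eqref{E:p-Mom-Rate}); substituting $\lambda\mapsto 4\lambda\sqrt p$ concludes.

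The main obstacle will be this final step, namely obtaining the sharp $\alpha$-dependent Lyapunov exponent \emph{uniformly in $t>0$} on the compact torus. The short-time behaviour is essentially Euclidean and can be controlled by comparing $H_{\lambda}$ against a bridge functional of a Riesz singularity, but torus-valued Brownian motion is recurrent and repeatedly revisits the singularity of $f_{\alpha,\rho}$; the removal of the zero Fourier mode in Dalang's sum, together with compactness of $\mathbb{T}^{d}$, must be combined carefully to pass from a local Schilder-type estimate near the singularity to the uniform exponential bound in~\eqref{E:p-Mom-Rate}.
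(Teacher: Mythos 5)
There is a genuine gap, and it sits exactly where the paper's main technical effort goes. Your Picard scheme controls the iteration through $\Phi(s)=\sup_{x}\iint G(s,x,y)G(s,x,y')f_{\alpha,\rho}(y-y')\,\ud y\,\ud y'$, i.e.\ through a bound that is uniform in $x$. That works for bounded initial data (e.g.\ $\mu\equiv 1$), but the theorem allows measure-valued $\mu$ such as $\delta_{0}$, for which $\|u^{(n)}(s,y)\|_{2}$ blows up as $s\downarrow 0$ non-uniformly in $y$ and the Gronwall input $\int_{0}^{t}\Phi(s)\,\ud s$ no longer closes the estimate; moreover your route cannot produce the multiplicative prefactor $J_{0}(t,x)$ in~\eqref{E:p-Moments}. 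The paper's resolution is to factor the product of heat kernels through the Brownian-bridge density, $G(s,x_0,z)G(t-s,z,x)=G(t,x_0,x)\,G_{t,x_0,x}(s,z)$, and then compare the torus bridge either to the torus heat kernel for $t\ge\epsilon$ (Lemma~\ref{L:Large}) or to a Euclidean bridge for $t<\epsilon$ (Lemma~\ref{L:BMBridge}); this is what yields the bound $\mathcal{L}_n\le C^{n}G(t,x_0,x)G(t,x_0',x')h_n(t)$ of Proposition~\ref{P:LK}, which carries the $J_0$ factor through every Picard step. A second, related problem: your Fourier identity for $\int_0^t\Phi(s)\,\ud s$ exploits that the pairing against $f_{\alpha,\rho}$ is exact, but once the iterated moments depend on $y$ (as they must for nonconstant $\mu$) you need bounds involving $|f_{\alpha,\rho}|$, and by Lemma~\ref{L:Cov}(ii) this covariance is signed; the paper handles this by the domination $|f_{\alpha,\rho}|\le f_{\alpha,\widehat{\rho}}$ in~\eqref{E:hatRho} (large time) and by the Riesz-kernel domination from~\eqref{E:fEst} after unwrapping the torus to $\R^{d}$ (small time). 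Neither device appears in your plan, and without them both the existence step and the chaos-kernel estimates you invoke are not justified.

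On the remaining items: obtaining the $p$-moment bound by hypercontractivity chaos-by-chaos instead of the paper's BDG-in-Picard argument is a legitimate alternative in principle (both give the $\lambda\sqrt{p}$ scaling), although landing exactly on $H_{4\lambda\sqrt{p}}$ requires the same $J_0$-weighted kernel estimates discussed above. For the rate~\eqref{E:p-Mom-Rate}, your proposed Feynman--Kac/Schilder analysis is both unnecessary and problematic (again because $f_{\alpha,\rho}$ is signed and the initial data may be a measure): in the paper the exponent $\lambda^{\max(4/(2(1+\alpha)-d),\,2)}$ comes from a purely deterministic Laplace-transform computation for $H_{\lambda}$, using the explicit bounds on $k_1$ and $k_2$ (Lemma~\ref{L:LogH}), so once~\eqref{E:p-Moments} is in place no probabilistic large-deviation input is needed.
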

\begin{remark}\label{R:novelty}
  It is worth mentioning that our result allows measure-valued initial
  conditions. In particular, it includes the commonly interested cases when
  $\mu=\delta_0$ and $\mu=1$. The proof of Theorem~\ref{T:Main} is inspired by
  the method developed in~\cite{chen.dalang:15:moments, chen.kim:19:nonlinear,
  chen.huang:19:comparison} for the \textit{stochastic heat equation} (SHE) on
  $\R^d$. The main novelty of our approach is to observe that the Brownian
  bridge (starting from $x$ and conditioned on arriving at $y$ at time $t$)
  plays an important role. The proof of the theorem is then based on the
  observation that a Brownian bridge on torus is comparable to that on a
  Euclidean space when $t$ is small, whereas it is comparable to a Brownian
  motion on torus when $t$ is large. This approach allows one to tackle the PAM
  in the direct mode (as opposed to the Fourier mode) and can be applied to
  study PAM on general manifolds. It circumvents the difficulty that on general
  manifolds, Fourier transform, or the spectral decomposition of the
  Laplace-Beltrami operator, is not particularly easy to work with.
\end{remark}
\smallskip

The study of PAM in current literature usually assumes that the covariance
function is positive. Unfortunately, it is not the case for $f_{\alpha,\rho}$ in
general.  However, one can show that it is indeed a positive function when
$\rho$ is large enough (see Lemma~\ref{L:Cov} below). In this case, we are able
to derive the following lower bound for the second moment of $u$.

\begin{theorem}\label{T:Lbd}
  Let the spatial covariance function $f: \mathbb{T}^{2d}\to \R_+$ be a generic
  nonnegative and nonnegative definite function. Assume that $f$ is uniformly
  bounded from below away from zero, i.e.,
  \begin{align}\label{E:Cf}
    C_f \coloneqq \inf_{x,x'\in \mathbb{T}^d} f(x,x') > 0,
  \end{align}
  then for all $\epsilon>0$, it holds that
  \begin{align}\label{E:Lbd}
    \E\left(u(t,x)^2\right) \ge J_0^2(t,x) + \frac{1}{2}\lambda^{-2} c_\epsilon^d \, C_\mu^2\, \exp\left(\frac{C_ft}{2}\right),
    \quad \forall (t,x)\in [\epsilon,\infty)\times \mathbb{T}^d,
  \end{align}
  where the constants $C_\mu$ and $c_\epsilon$ are given in~\eqref{E:Cmu}
  and~\eqref{E:C-Large}, respectively.
\end{theorem}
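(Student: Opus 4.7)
The plan is to start from the second-moment integral equation for $u$, iterate it into a Neumann/Picard series whose terms are all nonnegative thanks to the hypothesis $f \ge 0$, and bound each term from below by replacing $f$ by its infimum $C_f$ and telescoping the spatial integrals via the semigroup property of $G$ on the torus. The pointwise bound \eqref{E:Lbd} will then follow by specializing $x = x'$ and invoking the uniform lower bound on $J_0(t,x)$ valid for $t \ge \epsilon$.

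Applying It\^o's isometry to the mild formulation \eqref{E:mild-sol}, the two-point function $V(t,x,x') := \E(u(t,x) u(t,x'))$ satisfies
\begin{equation*}
  V(t,x,x') = J_0(t,x) J_0(t,x') + \lambda^2 \int_0^t \iint_{\dom^{2d}} G(t-s,x,y) G(t-s,x',y') f(y,y') V(s,y,y') \, \ud y\, \ud y'\, \ud s.
\end{equation*}
The assumption $f \ge 0$ is critical here: it guarantees that the Picard iteration $V_0 := J_0 \otimes J_0$, $V_{n+1} := \lambda^2 \mathcal{L}(V_n)$ produces nonnegative iterates $V_n$ summing to $V$ (convergence being ensured by the moment bound \eqref{E:p-Moments}). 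A short induction that replaces $f$ by $C_f$ at each step and repeatedly uses the Chapman--Kolmogorov identity $\int_{\dom^d} G(t-s,x,y) J_0(s,y) \, \ud y = J_0(t,x)$ yields
\begin{equation*}
  V_n(t,x,x') \ge (\lambda^2 C_f)^n \, \frac{t^n}{n!} \, J_0(t,x) J_0(t,x'),
\end{equation*}
so that summation produces the clean multiplicative bound $V(t,x,x') \ge J_0(t,x) J_0(t,x') \exp(\lambda^2 C_f t)$.

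To convert this into the additive form of \eqref{E:Lbd}, I would take $x = x'$, invoke the uniform lower bound $J_0(t,x) \ge c_\epsilon^d C_\mu$ for $(t,x) \in [\epsilon,\infty) \times \dom^d$ (which follows from $G_d = \prod_{i=1}^d G_1$ and \eqref{E:C-Large}), and decompose $\exp(\lambda^2 C_f t) = 1 + (\exp(\lambda^2 C_f t) - 1)$, with the $1$ absorbed by the trivial bound $V(t,x,x) \ge J_0(t,x)^2$ and the remainder giving the exponential term.

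The chaos/Picard scheme is a standard template for PAM-type models; the genuinely delicate point is matching the precise constants displayed in \eqref{E:Lbd} — in particular the prefactor $\tfrac{1}{2}$, the single (rather than double) power $c_\epsilon^d$, the $\lambda^{-2}$, and the halved rate in the exponent. I expect these fine constants will emerge from a refinement in which the time integral is restricted to $[0, t/2]$ (producing the factor $1/2$ in the exponent) and in which the heat-kernel lower bound $G \ge c_\epsilon^d$ is applied to only one of the two kernels, the other being reabsorbed via $\int_{\dom^d} G(t-s,x,y) \, \ud y = 1$ on the torus; the $\lambda^{-2}$ prefactor should be traceable to the normalization of $\mathcal{K}_\lambda$ in Theorem~\ref{T:Main}(1).
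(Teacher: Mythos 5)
Your core computation is correct, and it follows a genuinely different route from the paper's. The paper never iterates the second-moment renewal equation with $J_0\otimes J_0$ kept inside; instead it bounds the kernels $\mathcal{L}_n$ of Definition~\ref{D:Ln} from below \emph{uniformly in all spatial variables}: it restricts the time integral to $[0,t/2]$, replaces the Brownian-bridge densities by $c_\epsilon^d\,G(s,x_0,\cdot)$ via Lemma~\ref{L:Large} (this is where $t\ge\epsilon$ and the constant $c_\epsilon$ of \eqref{E:C-Large} enter), bounds $f\ge C_f$, and uses $t^n-(t/2)^n\ge t^n/2$ in the induction to get $\mathcal{L}_n\ge\tfrac{c_\epsilon^d}{2}\tfrac{1}{n!}(C_f t/2)^n$, hence $\mathcal{K}_\lambda\ge\tfrac{c_\epsilon^d}{2}e^{C_f t/2}$; the bound \eqref{E:Lbd} then follows from the two-point formula \eqref{E:TwoPoint}, with $C_\mu^2$ coming from $\iint\mu(\ud z)\mu(\ud z')$ and the $\lambda^{-2}$ coming from that formula itself. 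Your iteration, which keeps $J_0\otimes J_0$ and uses only the semigroup identity $\int_{\dom^d}G(t-s,x,y)J_0(s,y)\,\ud y=J_0(t,x)$ together with $f\ge C_f\ge0$, avoids the bridge comparison entirely and yields the multiplicative bound $\E(u(t,x)u(t,x'))\ge J_0(t,x)J_0(t,x')\,e^{\lambda^2C_f t}$, which is clean, valid for all $t>0$, and in several respects sharper than what the paper extracts.

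The gap is in the conversion to the stated additive bound. First, the inequality $J_0(t,x)\ge c_\epsilon^d C_\mu$ with the \emph{specific} constant $c_\epsilon$ of \eqref{E:C-Large} is asserted, not proved: what follows from \eqref{E:G-bdd} and \eqref{E:G-Inf} is that $\inf_{t\ge\epsilon,\,x,y}G(t,x,y)>0$, but this infimum need not dominate the bridge constant $c_\epsilon$ (indeed $c_\epsilon\to 1/4$ as $\epsilon\to\infty$ while $G(t,x,y)\to(2\pi)^{-d}<4^{-d}$ uniformly, so the claimed inequality fails for large $\epsilon$); and even granting a uniform bound $G\ge c$, squaring $J_0$ gives $c^{2d}C_\mu^2$, not $c_\epsilon^d C_\mu^2$. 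Second, and more substantively, your final bound has the form $J_0^2+\mathrm{const}\cdot C_\mu^2\bigl(e^{\lambda^2C_f t}-1\bigr)$ with a $\lambda$-independent prefactor and rate $\lambda^2C_f$; it does not reproduce the $\tfrac12\lambda^{-2}$ prefactor of \eqref{E:Lbd}, and for small $|\lambda|$ it is strictly weaker than the displayed inequality, so as a proof of the literal statement the last step does not close. The $\lambda^{-2}$ is not a normalization one can chase separately from the exponent: in the paper it sits in \eqref{E:TwoPoint}, and the single power $c_\epsilon^d$, the $1/2$, and the halved rate all come from the uniform kernel bound on $\mathcal{K}_\lambda$, not from a lower bound on $J_0$. (In fairness, the paper's own series $\mathcal{K}_\lambda=\sum_n\lambda^{2n}\mathcal{L}_n$ really yields $e^{\lambda^2C_ft/2}$ rather than $e^{C_ft/2}$ unless $|\lambda|\ge1$, so the displayed constants are delicate on both sides; your argument does establish the same qualitative matching exponential lower bound, just not with the constants of \eqref{E:Lbd}.)
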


Comparing to Theorem~\ref{T:Main}, Theorem~\ref{T:Lbd} provides a matching
(exponential in $t$) lower bound for the second moment of $u$.  This theorem will be
proved in Section~\ref{S:Lbd}, where one can find more discussions regarding the
lower bounds of the second moment.

We believe that Theorem~\ref{T:Lbd} holds true for all $\rho>0$; the condition
requiring $\rho$ large enough so that $f_{\alpha,\rho}$ is positive is only a
technical assumption. Indeed, if we assume in addition that the initial data $\mu$ is given
by a bounded measurable function, we are able to prove the exponential lower
bound for all $\rho>0$.

\begin{theorem}\label{T:Lbd F-K}
  Assume that the initial condition is given by a bounded measurable function
  which is also bounded below away from zero. Then under Dalang's
  condition~\eqref{E:Dalang}, the second moment of the solution to the parabolic
  Anderson model satisfies the exponential lower bound for some $C,C'>0$,
  \begin{align*}
      \mathbb{E}\left[u(t,x)^2\right] \geq C e^{C't},
  \end{align*}
  when the driving noise satisfies $\rho>0$.
\end{theorem}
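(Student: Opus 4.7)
The plan is to invoke the Feynman--Kac representation of the second moment, use Jensen's inequality to pull the exponential outside the expectation, and extract exponential growth from the zero Fourier mode of $f_{\alpha,\rho}$, which equals $\rho/(2\pi)^d>0$ precisely when $\rho>0$. In effect, the conclusion is an averaging statement: two independent Brownian motions on $\mt^d$ equidistribute, so on long time scales the random interaction $f_{\alpha,\rho}(B^1_s-B^2_s)$ averages to $(2\pi)^{-d}\int_{\mt^d}f_{\alpha,\rho}=\rho/(2\pi)^d$, yielding a Lyapunov rate at least $\lambda^2\rho/(2\pi)^d$.

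Concretely, I would first establish the representation
\begin{align*}
\E\!\left[u(t,x)^2\right] = E_{x,x}^{B^1,B^2}\!\left[u_0(B^1_t)u_0(B^2_t)\exp\!\left(\lambda^2\!\int_0^t\! f_{\alpha,\rho}(B^1_s-B^2_s)\,ds\right)\right],
\end{align*}
where $B^1$ and $B^2$ are independent Brownian motions on $\mt^d$ with generator $\tfrac12\triangle$, both started at $x$. The hypothesis $u_0\geq c>0$ together with Jensen's inequality then yields
\begin{align*}
\E\!\left[u(t,x)^2\right] \;\geq\; c^2\exp\!\left(\lambda^2\!\int_0^t\! E\bigl[f_{\alpha,\rho}(W_s)\bigr]\,ds\right),
\end{align*}
with $W_s := B^1_s-B^2_s$, a Brownian motion on $\mt^d$ with generator $\triangle$ started at $0$, whose density at time $s$ is $G(2s,\cdot)$. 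A Fourier/Parseval computation gives
\begin{align*}
E\bigl[f_{\alpha,\rho}(W_s)\bigr] = \sum_{k\in\mz^d} c_k\,e^{-|k|^2 s},\qquad c_k := \frac{1}{(2\pi)^d}\int_{\mt^d} f_{\alpha,\rho}(y)\,e^{-ik\cdot y}\,dy.
\end{align*}
Two observations close the argument: the zero mode is $c_0=\rho/(2\pi)^d$ (integrate~\eqref{E:f} over $\mt^d$ and use $\int_{\mt^d}G(t,y)\,dy=1$), and every $c_k\geq 0$ because $f_{\alpha,\rho}$ is positive definite as a covariance. Dropping the nonzero modes yields $\int_0^t E[f_{\alpha,\rho}(W_s)]\,ds\geq \rho\,t/(2\pi)^d$, hence $\E[u(t,x)^2]\geq c^2\exp\bigl(\lambda^2\rho\,(2\pi)^{-d}t\bigr)$, the desired bound with $C'=\lambda^2\rho/(2\pi)^d>0$.

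The main obstacle is the rigorous justification of the Feynman--Kac representation for the singular covariance in the Dalang regime $\alpha<d/2$. I plan to address this by mollification: replace $f_{\alpha,\rho}$ with the smoothed covariance $f_{\alpha,\rho}\ast\varphi_\varepsilon$, for which Feynman--Kac is classical by Ito calculus on the Dalang--Walsh integral, and then pass $\varepsilon\downarrow 0$ using $L^2$--continuity of the mild solution (inherited from the explicit second moment formula in Theorem~\ref{T:Main}(1)) together with $L^1(\P)$--convergence of $\int_0^t (f_{\alpha,\rho}\ast\varphi_\varepsilon)(W_s)\,ds$ to $\int_0^t f_{\alpha,\rho}(W_s)\,ds$. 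That last convergence, as well as the finiteness of $E\!\int_0^t|f_{\alpha,\rho}(W_s)|\,ds$ needed for Jensen, reduces via the same spectral computation to the absolute convergence $\sum_{k\in\mz^d_*}|c_k|/|k|^2<\infty$ that is precisely Dalang's condition~\eqref{E:Dalang}.
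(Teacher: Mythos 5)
Your proposal is correct and follows essentially the same route as the paper: the Feynman--Kac representation of the second moment, the pointwise lower bound from the initial data, Jensen's inequality, and the spectral computation in which the zero Fourier mode $\rho/(2\pi)^d$ of $f_{\alpha,\rho}$ produces the exponential rate. The only cosmetic differences are that you drop the nonzero modes using nonnegativity of the Fourier coefficients while the paper keeps them and bounds their contribution by a constant via Dalang's condition, and that you sketch a mollification argument for the Feynman--Kac formula where the paper simply cites \cite{hu.nualart.ea:11:feynman-kac}.
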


The heuristics of the above theorem go as follows. Assume for a moment that
$\mu\equiv 1$ so that we have the Feynman-Kac formula for the second moment:
\begin{align}\label{eq:F-K for moments}
  \mathbb{E}\left[u(t,x)^2\right] = \E\left[\exp\left\{\lambda^2\int_0^tf_{\alpha,\rho}(B_s, \widetilde{B}_s) \ud s \right\}\right].
\end{align}
In the above, $B$ and $\widetilde{B}$ are two independent Brownian motions on
$\mathbb{T}^d$ starting from $x$, and $\E$ is taking expectation with respect to
the Brownian motion. Since $\mathbb{T}^d$ is compact, the ergodic theorem
implies that the exponent in~\eqref{eq:F-K for moments} asymptotically and
almost surely becomes
\begin{align*}
  \int_0^tf_{\alpha,\rho}(B_s,\widetilde{B}_s) \ud s
  & \sim t \times \left(\frac{1}{(2\pi)^{2d}}\iint_{\mathbb{T}^{2d}}f_{\alpha,\rho}(x,y) \ud x  \ud y \right) \\
  & = \frac{\rho\, t}{(2\pi)^d}+ \frac{t}{(2\pi)^{2d}}\iint_{\mathbb{T}^{2d}}f_{\alpha}(x,y) \ud x  \ud y,\quad \mathrm{as}\ t\uparrow\infty.
\end{align*}
However, note that the integral on the right hand-side vanishes, by the
definition of $f_\alpha=f_{\alpha,0}$ in~\eqref{E:f}. Therefore, as
$t\uparrow\infty$,
\begin{align*}
  \mathbb{E}\left[u(t,x)^2\right]
  \geq \left[\exp\left\{\lambda^2\E\int_0^tf_{\alpha,\rho}(B_s, \widetilde{B}_s) \ud s \right\}\right]
  \sim \exp\left(\frac{\rho\lambda^2}{(2\pi)^d}t+o(t)\right).
\end{align*}
The above argument clearly suggests that the ergodicity of the Brownian motion
(due to compactness of the torus) is the main source that leads to the
exponential lower bound for the second moment. As a consequence, one always
observes intermittency in this situation; no phase transition takes place. We
expect that this is a general phenomenon when the state space is a compact
manifold without boundary. However, the question becomes more delicate when
$\rho=0$. In addition, Theorem~\ref{T:Lbd F-K} does not address the case when
the initial date is rough. These questions will be tackled in a future work.
\bigskip

Finally, the H\"{o}lder regularity of the solution $u$ is given as follows.
\begin{theorem}\label{T:Holder}
  Then if the noise correlation satisfies the strong Dalang
  condition~\eqref{E:Dalang} {for some $\alpha \in (0,d/2)$},
  the unique solution $u$ starting from a finite measure $\mu$ is
  $\beta_1$--H\"older continuous in time and $\beta_2$--H\"older continuous in
  space on $(0,\infty)\times\dom^d$ for all
  \begin{align*}
    \beta_1\in\left(0,{\frac{2\alpha+2-d}{4}}\right) \quad \text{and} \quad
    \beta_2\in\left(0,{\frac{2\alpha+2-d}{2}}\right).
  \end{align*}
\end{theorem}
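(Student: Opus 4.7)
The plan is to verify the two-parameter Kolmogorov continuity criterion: for every $\epsilon>0$ and $T>\epsilon$, I aim to establish that for each $p>2$ there exist exponents $\theta_1$ and $\theta_2$, arbitrarily close to $(2\alpha+2-d)/4$ and $(2\alpha+2-d)/2$ as $p\to\infty$, such that
\[
\Norm{u(t,x)-u(s,x')}_p \le C_{p,\epsilon,T}\bigl(|t-s|^{\theta_1}+\myDist{x,x'}^{\theta_2}\bigr),\qquad (t,x),(s,x')\in[\epsilon,T]\times\mt^d.
\]
Splitting via~\eqref{E:mild-sol} gives $u(t,x)-u(s,x')=\Delta J_0+\lambda\,\Delta I$; for $s\le t$,
\begin{align*}
  \Delta I &= \int_s^t\int_{\mt^d}G(t-r,x,y)u(r,y)W(\ud r,\ud y) \\
  &\quad+ \int_0^s\int_{\mt^d}\bigl[G(t-r,x,y)-G(s-r,x',y)\bigr]u(r,y)W(\ud r,\ud y).
\end{align*}

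For the deterministic increment $\Delta J_0$, the heat semigroup on $\mt^d$ maps finite measures into $C^\infty$ functions for every $t>0$, so $(t,x)\mapsto J_0(t,x)$ is smooth on $(0,\infty)\times\mt^d$ and hence Lipschitz on the compact set $[\epsilon,T]\times\mt^d$ with a constant depending only on $C_\mu$, $\epsilon$, and $T$. For the stochastic increment I will use the Walsh-integral version of the Burkholder--Davis--Gundy inequality combined with Minkowski's inequality (in the spirit of~\cite{chen.dalang:15:moments, chen.kim:19:nonlinear}) to obtain
\[
\Norm{\Delta I}_p^{2} \le c\, p \int_0^{t}\iint_{\mt^{2d}}\mathcal{H}(r,y,z)\,f_{\alpha,\rho}(y-z)\,\Norm{u(r,y)}_{p}\Norm{u(r,z)}_{p}\,\ud y\,\ud z\,\ud r,
\]
where $\mathcal{H}$ is the appropriate product of heat-kernel increments read off from $\Delta I$. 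The moment bound~\eqref{E:p-Mom-Rate} from Theorem~\ref{T:Main} controls $\Norm{u(r,y)}_p$ by $C_p\,J_0(r,y)e^{C_p r}$, and since $J_0$ is uniformly bounded on $[\epsilon/2,T]\times\mt^d$, I may pull a constant $C_{p,\epsilon,T}$ out of the integral.

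The decisive step is the Fourier analysis of the remaining deterministic heat-kernel integrals. Using $G(t,x)=(2\pi)^{-d}\sum_{k\in\mz^d}e^{ik\cdot x}e^{-|k|^2t/2}$ and, from~\eqref{E:f}, $\cF(f_{\alpha,\rho})(k)\asymp|k|^{-2\alpha}$ for $k\in\mz^d_*$, Parseval's identity on $\mt^d$ reduces the $\mathcal{H}$-integrals to explicit sums over $k$. Applying the elementary interpolation inequalities $(1-e^{-a})^{2}\le a^{2\theta_1}$ and $|e^{ik\cdot x}-e^{ik\cdot x'}|^{2}\le C(|k|\,\myDist{x,x'})^{2\theta_2}$ valid for $\theta_1,\theta_2\in[0,1]$, the pure time increment (with $x=x'$) and the pure spatial increment (with $t=s$) are majorized by
\[
C|t-s|^{2\theta_1}\sum_{k\in\mz^d_*}|k|^{4\theta_1-2\alpha-2}
\quad\text{and}\quad
C\,\myDist{x,x'}^{2\theta_2}\sum_{k\in\mz^d_*}|k|^{2\theta_2-2\alpha-2},
\]
respectively, where the ``overlap'' piece $\int_0^s[G(t-r,x,y)-G(s-r,x,y)]\,\cdots$ provides the binding constraint in the time bound and the ``endpoint'' piece $\int_s^t G(t-r,x,y)\,\cdots$ yields a similar but strictly less restrictive sum. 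These two series converge when $\theta_1<(2\alpha+2-d)/4$ and $\theta_2<(2\alpha+2-d)/2$, respectively; Dalang's condition~\eqref{E:Dalang} is exactly what makes both ranges nonempty.

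Inserting these bounds into Kolmogorov's criterion and letting $\epsilon\downarrow 0$, $T\uparrow\infty$, and $p\to\infty$ yields joint Hölder continuity on $(0,\infty)\times\mt^d$ with any exponents strictly less than the stated thresholds. The main obstacle is the singularity of $J_0$ at $t=0$ caused by the measure-valued initial condition: the constant $C_{p,\epsilon,T}$ blows up as $\epsilon\downarrow 0$, so the Hölder estimate cannot be pushed down to $t=0$, matching exactly the statement of the theorem. A secondary subtlety is that $f_{\alpha,\rho}$ is not pointwise nonnegative, but since Bochner's theorem guarantees nonnegative Fourier coefficients $\cF(f_{\alpha,\rho})(k)\ge 0$, the $L^2$-isometry for the Walsh integral is still dominated by the nonnegative Fourier-side sums used above.
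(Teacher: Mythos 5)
There is a genuine gap, and it sits exactly where the theorem's difficulty lies: the measure-valued initial condition. After the BDG/Minkowski step, your time integral runs over $r\in(0,t]$, and the weight $\Norm{u(r,y)}_p$ is controlled by $C\,J_0(r,y)e^{Cr}$, which is \emph{not} uniformly bounded near $r=0$ for a general finite measure $\mu$ (for $\mu=\delta_0$ one has $J_0(r,y)=G(r,y)\sim r^{-d/2}$ near the atom). Your step ``since $J_0$ is uniformly bounded on $[\epsilon/2,T]\times\mt^d$, I may pull a constant $C_{p,\epsilon,T}$ out of the integral'' conflates the observation times $t,s\ge\epsilon$ with the integration variable $r$, which goes all the way down to $0$; the constant cannot be pulled out. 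Moreover, once the weight $J_0(r,y)J_0(r,z)$ is kept inside (as it must be), the integrand is no longer a convolution in $y,z$, so Parseval does not reduce the double spatial integral to the diagonal sums $\sum_k|k|^{4\theta_1-2\alpha-2}$ and $\sum_k|k|^{2\theta_2-2\alpha-2}$ that drive your exponent count; and your fallback for the signed kernel (dominate by the nonnegative Fourier coefficients) likewise only works when the spatial weights are constant, since replacing $f_{\alpha,\rho}$ by $|f_{\alpha,\rho}|$ destroys Fourier positivity. In short, your scheme is essentially the standard Fourier-mode argument and would go through for bounded initial data, but it does not prove the theorem as stated.

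The paper closes precisely this gap in the direct mode: it first proves pointwise heat-kernel increment bounds (Lemma~\ref{L:Holder}), giving $|G(t-s,x,y)-G(t-s,x',y)|\lesssim (t-s)^{-\beta/2}\myDist{x,x'}^{\beta}\left[G(2(t-s),x,y)+G(2(t-s),x',y)\right]$, and then uses the identity $G(2s,y,z)\,G(2(t-s),x,y)=G(2t,x,z)\,G_{2t,x,z}(2s,y)$ to factor the singular weight $J_0$ out as $J_0(2t,x)$ times a Brownian-bridge density in $y$. The bridge density is then compared to Gaussians on $\R^d$ (Lemma~\ref{L:BMBridge}), after which the $f_{\alpha,\rho}$-integral is bounded by $k_2(s)\asymp s^{\alpha-d/2}$ exactly as in Step~2 of Proposition~\ref{P:LK}, and the condition $\beta<1+\alpha-d/2$ (hence $\beta_2<(2\alpha+2-d)/2$, with half that rate in time) emerges from the convergence of $\int_0^t s^{\alpha-d/2-\beta}\,\ud s$ rather than from a lattice sum. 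If you want to rescue your outline, you must replace the ``pull out a constant'' step by this bridge factorization (or an equivalent device that absorbs $J_0(r,\cdot)$ for small $r$); the rest of your architecture (splitting into overlap and endpoint pieces, Kolmogorov's criterion, and the Lipschitz bound for $J_0$ away from $t=0$) is sound.
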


This theorem will be proved in Section~\ref{S:Holder}. The proof of this theorem
follows similar arguments as the corresponding result for $\R^d$ (see
Theorem~1.8 of~\cite{chen.huang:19:comparison}) with more complexity introduced
by the fundamental solution.

\bigskip There has been a growing interest in the study of SPDE (and related
polymers models) on some ``exotic" spaces. For example, C. Cosco, I. Seroussi
and O. Zeitouni considered directed polymers on infinite graph
in~\cite{cosco.seroussi.ea:21:directed}. In a recent
work~\cite{baudoin.ouyang.ea:22:parabolic}, the authors studied the PAM on
Heisenberg groups. In addition, A. Mayorcas and H. Singh released a
preprint~\cite{mayorcas.singh:23:singular} recently studying singular SPDEs on
Homogeneous Lie Groups.

The construction of the colored noise on $\mathbb{T}^d$ presented in this paper
grows from a discussion between the second author and Prof. Elton Hsu during the
BIRS-CMO workshop ``Theoretical and Applied Stochastic Analysis'' in 2018.
Later, the second author was informed by Prof. Fabrice Baudoin that a fractional
noise is introduced in a similar spirit on general Riemannian manifolds
in~\cite{gelbaum:14:fractional}.

Our construction can be easily generalized to general (compact) Riemannian
manifolds, thereby encompassing a large class of spaces with rich geometric and
topological properties. This naturally raises questions: Which specific
geometric or topological properties might influence the behavior of the PAM, and
how might they introduce novel features to the model? The present work can be
regarded as an initial step in this direction. Notably, as elaborated in
Remark~\ref{R:novelty} above, our methodology does not necessarily rely on
Fourier analysis. The implementation of our techniques developed in this paper
in order to study the PAM on general manifolds will be given in a subsequent
work.

\bigskip The rest of the paper is organized as follows. In
Section~\ref{S:Prelim}, we prove some preliminary properties of the densities of
Brownian motion and Brownian bridge on torus. Section~\ref{S:Noise} is dedicated
to the construction of the colored noise on torus. The main technical step in
proving Theorem~\ref{T:Main} is prepared in Section~\ref{S:Kappa}. Then we prove
Theorem~\ref{T:Main} in Section~\ref{S:Main T}, and Theorems~\ref{T:Lbd}
and~\ref{T:Lbd F-K} in Section~\ref{S:Lbd}. Finally, the H\"{o}lder continuity
of the solution claimed in Theorem~\ref{T:Holder} is established in Section
\ref{S:Holder}. In order to improve the readability of the article, we deferred
the proof of Lemma~\ref{L:G-bdd} to Appendix~\ref{A:G-bdd}.

\section{Preliminary -- the fundamental solution and various properties}\label{S:Prelim}

In this section, we establish some properties of the fundamental solution.

\begin{lemma}\label{L:G-bdd}
  The fundamental solution $G(t,x)$ satisfies that
  \begin{align}\label{E:G-bdd}
    C_t^d \leq \frac{G(t,x)}{p(t,x)} \leq \left(2 C_t\right)^d, \quad \text{for all $(t,x)\in \R_+ \times \mathbb{T}^d$,}
  \end{align}
  where the constant $C_t$ as a function of $t$ can be expressed in the
  following equivalent expressions:
  \begin{alignat}{2}\label{E:Ct}
      C_t
      & \stackrel{(s) }{=} \sum_{n=-\infty}^{\infty} e^{-\frac{2n^2\pi^2}{t}}
      & \stackrel{(p) }{=} \prod_{n=1}^{\infty} \left(1- e^{-\frac{4n\pi^2}{t}}\right)\left(1+e^{-\frac{2(2n-1)\pi^2}{t}}\right)^2 \\
      & \stackrel{(s')}{=} \sqrt{\frac{t}{2\pi}} \sum_{n=-\infty}^{\infty} e^{-\frac{n^2 t}{2}}
      & \stackrel{(p')}{=} \sqrt{\frac{t}{2\pi}} \prod_{n=1}^{\infty} \left(1- e^{-nt}\right)\left(1+e^{-\frac{(2n-1)t}{2}}\right)^2.
  \end{alignat}
  Moreover, the following properties hold:
  \begin{enumerate}
    \item $C_t$ has the following asymptotic properties:
      \begin{align}\label{E:Ct-Asym}
        \lim_{t\to 0}  t \log\left(\log\left(C_t\right)\right) = -\pi^2
        \quad \text{and} \quad
        \lim_{t\to \infty} \frac{1}{t}\log\left(\log\left(C_t \sqrt{\frac{2\pi}{t}}\right)\right) = -\frac{1}{2};
      \end{align}
    \item $C_t$ satisfies the following bounds for all $t>0$:
      \begin{align}\label{E:Ct-bdd}
        \max\left(1,\sqrt{\frac{t}{2\pi}}\right) \le C_t \le 1 + \sqrt{\frac{t}{2\pi}};
      \end{align}
    \item For all $t>0$ and  $x\in \mathbb{T}^d$, it holds that
      \begin{align}\label{E:G-UnifBd}
        G(t,x) \le \left(\sqrt{\frac{2\pi}{t}}\:C_t\right)^d \le \left(1+\sqrt{\frac{2\pi}{t}}\right)^d;
      \end{align}
    \item For any $\epsilon>0$, there exists some constant $\Theta_{\epsilon,d}>0$
      such that for all $t\ge \epsilon$,
      \begin{align}\label{E:G-Inf}
        \sup_{x\in \mathbb{T}^d}\left\lvert G(t,x) - \left(\frac{1}{2\pi}\right)^d\right\rvert
        \le \Theta_{\epsilon,d}\: e^{-t/2}.
      \end{align}
  \end{enumerate}
\end{lemma}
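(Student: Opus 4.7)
The plan is to first reduce to dimension one via the product structure $G(t,x)=\prod_{i=1}^d G_1(t,x_i)$ and $p(t,x)=\prod_{i=1}^d p(t,x_i)$, so that the main two-sided bound \eqref{E:G-bdd} reduces to showing $C_t\le G_1(t,x)/p(t,x)\le 2C_t$ for $|x|\le\pi$. Rewriting $G_1(t,x)/p(t,x)=\sum_{k\in\mathbb{Z}} e^{-2\pi^2 k^2/t}\,e^{-2\pi kx/t}$, the lower bound comes from pairing $\pm k$ and using $\cosh\ge 1$. For the upper bound, assuming $x\in[0,\pi]$ by evenness, the key pointwise inequalities are $(x+2\pi k)^2-x^2\ge 4\pi^2 k^2$ for $k\ge 1$ and $(x-2\pi k)^2-x^2\ge 4\pi^2 k(k-1)$ for $k\ge 1$; combined with $k(k-1)\ge (k-1)^2$ and a reindexing, these yield $G_1(t,x)/p(t,x)\le 1+C_t\le 2C_t$ since $C_t\ge 1$.

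The four equivalent forms of $C_t$ are standard theta-function identities. I would prove (s)$\leftrightarrow$(s') by applying the Poisson summation formula to $u\mapsto e^{-tu^2/2}$, whose Fourier transform is $\sqrt{2\pi/t}\,e^{-2\pi^2\xi^2/t}$, so that $\sum_n e^{-tn^2/2}=\sqrt{2\pi/t}\sum_n e^{-2\pi^2 n^2/t}$. The product forms (p) and (p') follow from the Jacobi triple product identity $\prod_{n\ge 1}(1-q^{2n})(1+q^{2n-1}z)(1+q^{2n-1}z^{-1})=\sum_{n\in\mathbb{Z}} q^{n^2}z^n$ specialized to $z=1$, with $q=e^{-2\pi^2/t}$ giving (p) and $q=e^{-t/2}$ giving (p').

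The asymptotics \eqref{E:Ct-Asym} then follow by inspecting leading terms: from (s) one has $C_t-1\sim 2e^{-2\pi^2/t}$ as $t\downarrow 0$, hence $\log\log C_t$ has the correct order, while (s') gives the analogous behavior of $C_t\sqrt{2\pi/t}-1$ as $t\uparrow\infty$. The sandwich \eqref{E:Ct-bdd} follows from the $n=0$ terms of (s) and (s'), which yield the two lower bounds, together with the integral comparison $\sum_{n\ge 1}e^{-2\pi^2 n^2/t}\le \int_0^\infty e^{-2\pi^2 u^2/t}\,\ud u=\tfrac12\sqrt{t/(2\pi)}$, valid since the integrand is decreasing on $[0,\infty)$.

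For (iii) I would use the Fourier/Poisson representation $G_1(t,x)=(2\pi)^{-1}\sum_{k\in\mathbb{Z}}e^{ikx}e^{-k^2 t/2}$ to obtain $G_1(t,x)\le C_t/\sqrt{2\pi t}\le \sqrt{2\pi/t}\,C_t$, then take the $d$-fold product and apply \eqref{E:Ct-bdd}. For (iv), the same Fourier representation gives $G(t,x)-(2\pi)^{-d}=(2\pi)^{-d}\sum_{k\ne 0}e^{ik\cdot x}e^{-|k|^2 t/2}$; the factor $e^{-t/2}$ can be pulled out since $|k|^2\ge 1$ on $\mathbb{Z}^d\setminus\{0\}$, and the remaining series $\sum_{k\ne 0}e^{-(|k|^2-1)\epsilon/2}$ converges for $t\ge\epsilon$, supplying the constant $\Theta_{\epsilon,d}$. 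The main technical obstacle I anticipate is the upper half of \eqref{E:G-bdd}, especially the boundary case $x=\pm\pi$ where the two nearest images of $0$ coincide at the same distance and the crude interior estimates are sharp only up to the factor of $2$; everything else is bookkeeping with classical theta identities and geometric series.
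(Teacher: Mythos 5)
Your proposal is correct in all parts, and for the central bound \eqref{E:G-bdd} it is essentially the paper's argument in more elementary dress: the paper derives the two representations of $G_1$ from theta-function identities (the $q$-series and its Jacobi imaginary transform), and its upper bound uses exactly your reindexing device $e^{-2\pi^2 n(n-c)/t}\le e^{-2\pi^2(n-1)^2/t}$ followed by a shift of index, arriving at $G_1\le 2p_1C_t$; your direct manipulation of the image sum $\sum_k p(t,x+2\pi k)/p(t,x)=\sum_k e^{-2\pi^2k^2/t}e^{-2\pi kx/t}$, with $\cosh\ge1$ below and $(x\pm 2\pi k)^2-x^2\ge 4\pi^2k(k-1)$ above, reproduces the same estimate $1+C_t\le 2C_t$ without invoking theta formalism, and it does handle the boundary $x=\pm\pi$ (there the $k=1$ reflected term contributes exactly the reindexed $j=0$ term, so your anticipated obstacle is already absorbed). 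Where you genuinely diverge: for the four forms of $C_t$ you prove the identities via Poisson summation and the Jacobi triple product, whereas the paper simply cites the NIST handbook; for \eqref{E:Ct-bdd} you use the $(s)$ form plus an integral comparison while the paper uses $(s')$, an immaterial difference; and for part (4) your argument is both different and simpler, bounding $\lvert G(t,x)-(2\pi)^{-d}\rvert\le(2\pi)^{-d}e^{-t/2}\sum_{k\ne0}e^{-(|k|^2-1)\epsilon/2}$ directly from the $d$-dimensional Fourier series, whereas the paper first establishes the $d=1$ case through the asymptotics of $\theta_3$ and then runs a telescoping product over coordinates; your route avoids that bookkeeping entirely. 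One point you should make explicit: your leading-term computation for the small-$t$ asymptotic gives $\log C_t\sim 2e^{-2\pi^2/t}$, hence $t\log\log C_t\to-2\pi^2$, which is also what the paper's own proof yields; the constant $-\pi^2$ displayed in \eqref{E:Ct-Asym} appears to be a typo, so rather than saying the limit "has the correct order" you should state the constant you actually obtain.
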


The proof of Lemma~\ref{L:G-bdd} is given in Appendix~\ref{A:G-bdd}. The
estimate in~\eqref{E:G-Inf} can also be found in Theorem~2.15
of~\cite{baxter.brosamler:76:energy}. In Figure~\ref{F:Ct}, we plot some graphs
of $C_t$ as a function of $t$.

\begin{figure}[htpb]
  \centering
  \begin{tikzpicture}[scale=0.9, transform shape]
    \tikzset{>=latex}
      \begin{axis}[
        axis lines = left,
        ytick={0, 0.5, 1, 1.2, 1.4, 1.6},
        yticklabel style={
            /pgf/number format/fixed,
            /pgf/number format/precision=3
        },
        xmin=0, xmax= 16.8,
        ymin=0, ymax= 1.8,
        xlabel={$t$},
        ylabel={$C_t$},
        xlabel style={at=(current axis.right of origin), anchor=center, xshift=2.5em, yshift = +1.0em},
        ylabel style={at=(current axis.above origin), xshift=+1.5em, yshift = -2.0em, rotate=-90},
      ]
      \addplot[domain=0.001:16.0, blue, solid, very thick] table {theta-NearZero.csv};
      \addplot[domain=0.001:16.0, red, dashed, very thick] {sqrt(x/(2*pi))};
    \end{axis}
  \end{tikzpicture}
  \hfill
  \begin{tikzpicture}[scale=0.9, transform shape]
    \begin{axis}[
        axis lines = left,
        ytick={0,1,2,3,4,5,6},
        xmin=0, xmax= 105,
        ymin=0.8, ymax= 4.5,
        xlabel={$t$},
        ylabel={$C_t$},
        xlabel style={at=(current axis.right of origin), anchor=center, xshift=2.5em, yshift = +1.0em},
        ylabel style={at=(current axis.above origin), xshift=+2.0em, yshift = -2.0em, rotate=-90},
      ]
      \addplot[domain=0.001:100, blue, solid, very thick] table {theta-NearInf.csv};
      \addplot[domain=0.001:100, red, dashed, very thick] {sqrt(x/(2*pi))};
    \end{axis}
  \end{tikzpicture}
  \caption{Some plots (solid lines) of the function $C_t$ with plots (dashed lines) for
  $\sqrt{\frac{t}{2\pi}}$.}
  \label{F:Ct}
\end{figure}
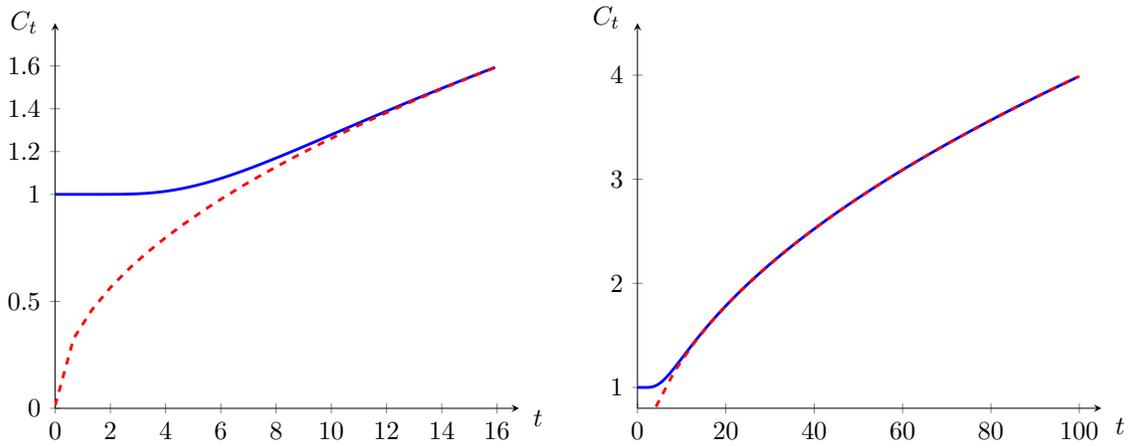

%
%
%
%

We will need to introduce the density of the \textit{pinned Brownian motion} or
\textit{Brownian bridge} (started at $x_0$ and terminating at $x$ at time $t$)
on $\mathbb{T}^d$:
\begin{align}\label{E:BMBridge-T}
  G_{t,x_0,x}(s,z)
  \coloneqq \frac{G(s,x_0,z)G(t-s,z,x)}{G(t,x_0,x)}, \quad
  \forall (x_0,x, z) \in \mathbb{T}^{3d},\, 0< s < t.
\end{align}
The corresponding transition density for the Brownian bridge on $\R^d$ is
\begin{align}\label{E:BMBridge-R}
  \begin{aligned}
    p_{t,x_0,x}(s,z)
    & \coloneqq \frac{p(s,x_0-z)p(t-s,z-x)}{p(t,x_0-x)}, \\
    & =p\left(\frac{s(t-s)}{t}, z-\left(x_0+\frac{s}{t}(x-x_0)\right)\right), \quad \forall (x_0,x, z) \in \R^{3d},\, 0< s < t.
  \end{aligned}
\end{align}

The following lemma states that when $t$ is ``large" and $s$ is small, the transition density of a Brownian bridge is comparable to that of a Brownian motion on torus.

\begin{lemma}\label{L:Large}
  Fix an arbitrary $\epsilon>0$. Suppose that $t\ge \epsilon$. Then for all
  $s\in [0,t/2]$ and $x_0,z\in \mathbb{T}^d$, the density for the Brownian
  bridge is comparable to a Gaussian on the torus,
  \begin{align}\label{E:Large}
    c_\epsilon^d\: G(s,x_0,z)\leq G_{t,x_0,x}(s,z) \leq C_{\epsilon}^d\: G(s,x_0,z),
  \end{align}
  where
  \begin{align}\label{E:C-Large}
    c_\epsilon \coloneqq \frac{\sqrt{\epsilon}}{2 \sqrt{\pi}+ \sqrt{2\epsilon}}
    \times \frac{1}{2 \sqrt{2}} e^{-\frac{\pi^2}{2\epsilon}}
    \quad \text{and} \quad
    C_\epsilon \coloneqq 2\left(1+ \sqrt{\frac{2\pi}{\epsilon}}\right) e^{\pi^2/\epsilon}.
  \end{align}
\end{lemma}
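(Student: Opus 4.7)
The plan is to use the definition~\eqref{E:BMBridge-T} of the Brownian bridge density directly, which gives
\[
\frac{G_{t,x_0,x}(s,z)}{G(s,x_0,z)} = \frac{G(t-s,z,x)}{G(t,x_0,x)}.
\]
Hence~\eqref{E:Large} reduces to uniform two-sided bounds on the ratio on the right-hand side. Since $t \ge \epsilon$ and $s \in [0,t/2]$, both times $t-s \in [t/2, t]$ and $t$ are bounded below by $\epsilon/2$, so the heat kernel on the torus is evaluated away from the singular time.

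Exploiting the product structure $G_d(u, y) = \prod_{i=1}^d G_1(u, y_i)$ from~\eqref{E:HeatKernel}, it suffices to prove the analogous one-dimensional comparison for $G_1$; the $d$-th power of that comparison directly yields the factors $c_\epsilon^d$ and $C_\epsilon^d$. The two one-dimensional ingredients from Lemma~\ref{L:G-bdd} are: the uniform upper bound $G_1(u, y) \le 1 + \sqrt{2\pi/u}$ from~\eqref{E:G-UnifBd}; and the Gaussian-type lower bound $G_1(u, y) \ge C_u\, p(u, y) \ge (2\pi)^{-1} e^{-y^2/(2u)} \ge (2\pi)^{-1} e^{-\pi^2/(2u)}$, which follows from~\eqref{E:G-bdd} by keeping only the $k = 0$ term in~\eqref{E:HeatKernel}, invoking $C_u \ge \sqrt{u/(2\pi)}$ from~\eqref{E:Ct-bdd}, and noting that $|\myMod{y}| \le \pi$ for $y \in \mathbb{T}$.

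For the upper inequality in~\eqref{E:Large}, I would apply the upper bound above to $G_1(t-s, \cdot)$ with $t - s \ge \epsilon/2$, and the lower bound to $G_1(t, \cdot)$ with $t \ge \epsilon$; for the lower inequality the roles of upper and lower estimates reverse. Simple algebraic simplifications such as $1 + \sqrt{4\pi/\epsilon} \le \sqrt{2}\,(1 + \sqrt{2\pi/\epsilon})$, together with collecting the prefactors $(2\pi)^{\pm 1}$, then cast the constants into the form of $c_\epsilon$ and $C_\epsilon$ given in~\eqref{E:C-Large}.

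The main obstacle is not conceptual but purely a matter of bookkeeping: one must track the numerical constants so that they assemble precisely into $c_\epsilon^d$ and $C_\epsilon^d$. The exponential factors $e^{\pm \pi^2/(2\epsilon)}$ in~\eqref{E:C-Large} originate from the Gaussian lower bound on $G_1$ evaluated at the antipodal distance $|\myMod{y}| = \pi$, while the polynomial prefactors $(1 + \sqrt{2\pi/\epsilon})^{\pm d}$ arise from the uniform upper bound~\eqref{E:G-UnifBd}. Once these pieces are assembled and the one-dimensional comparison is raised to the $d$-th power, the two-sided inequality~\eqref{E:Large} follows.
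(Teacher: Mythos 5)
Your reduction is the same as the paper's: write $G_{t,x_0,x}(s,z)/G(s,x_0,z)=G(t-s,z,x)/G(t,x_0,x)$, pass to $d=1$ by the product structure, and use $t-s\in[t/2,t]$, $t\ge\epsilon$ together with Lemma~\ref{L:G-bdd}. The implementation differs: the paper applies the two-sided comparison \eqref{E:G-bdd} to \emph{both} kernels, so that only the ratio $C_{t-s}/C_t$ (controlled by \eqref{E:Ct-bdd}) and a ratio of two Euclidean Gaussians (controlled by $\left|\myMod{\cdot}\right|\le\pi$) remain, whereas you bound numerator and denominator separately, via the uniform bound \eqref{E:G-UnifBd} and the crude pointwise bound $G_1(u,y)\ge(2\pi)^{-1}e^{-\pi^2/(2u)}$. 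That is legitimate and yields a two-sided comparison of the stated form, but your last claim --- that the constants ``assemble precisely into'' \eqref{E:C-Large} --- is not mere bookkeeping and in fact fails. Your upper estimate is $2\pi\bigl(1+\sqrt{4\pi/\epsilon}\bigr)e^{\pi^2/(2\epsilon)}$, which exceeds $C_\epsilon=2\bigl(1+\sqrt{2\pi/\epsilon}\bigr)e^{\pi^2/\epsilon}$ once $e^{\pi^2/(2\epsilon)}<\sqrt{2}\,\pi$ (i.e.\ for $\epsilon$ larger than roughly $3.3$, where the sup bound \eqref{E:G-UnifBd} overshoots the true sup $\approx(2\pi)^{-1}$ by a factor $2\pi$). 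Your lower estimate is $\bigl[2\pi\bigl(1+\sqrt{2\pi/\epsilon}\bigr)\bigr]^{-1}e^{-\pi^2/\epsilon}$, which is strictly smaller than $c_\epsilon=\bigl[4\bigl(1+\sqrt{2\pi/\epsilon}\bigr)\bigr]^{-1}e^{-\pi^2/(2\epsilon)}$ for every $\epsilon>0$, because your exponential rate is $e^{-\pi^2/\epsilon}$ (coming from $\myDist{z,x}=\pi$ evaluated at time $t-s$ as small as $\epsilon/2$), not $e^{-\pi^2/(2\epsilon)}$.

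The second discrepancy cannot be repaired by sharper bookkeeping, and it is not your error: the rate $e^{-\pi^2/\epsilon}$ your method produces appears to be the correct one, and the constant $c_\epsilon$ of \eqref{E:C-Large} is too large. Indeed, for $d=1$, $\epsilon=t=1$, $s=1/2$, $x_0=x=0$, $z=\pi$ one has $G_{t,x_0,x}(s,z)/G(s,x_0,z)=G(1/2,\pi)/G(1,0)\approx1.5\times10^{-4}$, while $c_1\approx5.1\times10^{-4}$, so the lower inequality in \eqref{E:Large} fails with the stated constant; the source in the paper's own argument is that the Gaussian ratio there is written as $p_1(t,\myMod{x_0-x})/p_1(t-s,\myMod{z-x})$, which is the reciprocal of the ratio actually produced by \eqref{E:G-bdd}. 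So the concrete gap in your proposal is the final assertion that you recover \eqref{E:C-Large} exactly: you should instead state the lemma with the explicit constants your argument delivers (and, if you want an upper constant comparable to $C_\epsilon$ for large $\epsilon$, keep the ratio $C_{t-s}/C_t$ as in \eqref{E:Ct-bdd} instead of invoking \eqref{E:G-UnifBd}). Since Proposition~\ref{P:LK} and Theorem~\ref{T:Lbd} only require some explicit $\epsilon$-dependent constants of this qualitative form, this modification is harmless downstream.
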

\begin{proof}
  We first prove the case when $d=1$. From~\eqref{E:G-bdd}, we see that
  \begin{align*}
    \frac{p_1\left(t,\myMod{x_0-x}\right)}{p_1\left(t-s,\myMod{z-x}\right)} \times \frac{C_{t-s}}{2C_t}
    \le \frac{G(t-s,z,x)}{G(t,x_0,x)} \le
    \frac{p_1\left(t,\myMod{x_0-x}\right)}{p_1\left(t-s,\myMod{z-x}\right)} \times \frac{2 C_{t-s}}{C_t}.
  \end{align*}
  Since $\myMod{x}\in [-\pi,\pi]$, we see that
  \begin{align*}
    \sqrt{\frac{t-s}{t}} e^{-\frac{\pi^2}{2t}}
    \le \frac{p_1\left(t,\myMod{x_0-x}\right)}{p_1\left(t-s,\myMod{z-x}\right)} \le
    \sqrt{\frac{t-s}{t}} e^{\frac{\pi^2}{2(t-s)}}.
  \end{align*}
  Since $t>\epsilon$ and $s\in (0,t/2)$, we see that $\epsilon/2\le t/2\le
  t-s\le t$. Hence,
  \begin{align*}
    \frac{1}{\sqrt{2}} e^{-\frac{\pi^2}{2\epsilon}}
    \le \frac{p_1\left(t,\myMod{x_0-x}\right)}{p_1\left(t-s,\myMod{z-x}\right)} \le
     e^{\pi^2/\epsilon}.
  \end{align*}
  From~\eqref{E:Ct-bdd}, we see that
  \begin{gather*}
    \frac{C_{t-s}}{C_t}
    \le \frac{1+ \sqrt{\frac{t-s}{2\pi}}}{\sqrt{\frac{t}{2\pi}}}
    \le \frac{\sqrt{2\pi}+ \sqrt{t}}{\sqrt{t}}\le 1+ \sqrt{\frac{2\pi}{\epsilon}} \qquad \text{and}\\
    \frac{C_{t-s}}{C_t}
    \ge \frac{\sqrt{\frac{t-s}{2\pi}}}{1+ \sqrt{\frac{t}{2\pi}}}
    \ge \frac{\sqrt{\frac{t}{4\pi}}}{1+ \sqrt{\frac{t}{2\pi}}}
    \ge \frac{\sqrt{\frac{\epsilon}{4\pi}}}{1+ \sqrt{\frac{\epsilon}{2\pi}}}
    = \frac{\sqrt{\epsilon}}{2 \sqrt{\pi}+ \sqrt{2\epsilon}}.
  \end{gather*}
  Combining the above inequalities proves the case of $d=1$ with the constants
  given in the statement of the lemma. As for case for $d\ge 2$, one only needs
  to raise above constants $c_\epsilon$ and $C_\epsilon$ to the power of $d$.
\end{proof}

The next lemma is our formal statement that when $t$ is small, the Brownian Bridge on torus can be compared to that on $\mathbb{R}^d$:
\begin{lemma}\label{L:BMBridge}
  There exists a universal constant $C>0$ such that for all $t>s>0$ and
  $x,x_0,z\in \mathbb{T}^d$ with $z-x_0\in \mathbb{T}^d$, it holds that
  \begin{align*}
    G_{t,x_0,x}(s,z)
    \le C \left(1+\sqrt{t}\right)^d\sum_{k\in \Pi^d} p_{t,x_0,x+k}(s,z),
  \end{align*}
  where
  \begin{align}\label{E:Pi}
    \Pi \coloneqq \left\{-2\pi, 0, 2\pi\right\}.
  \end{align}
  Note that $p_{t,x_0,x}(s,z)$ is a function on $\mathbb{R}^d$ (as apposed to a function on torus). The condition $z-x_0\in \mathbb{T}^d$ in the above is simply a concise way to state that, when evaluated by $p_{t,x_0,x}(s,\cdot)$, $z$ takes values in $[x^1_0-\pi,x^1_0+\pi]\times\cdots\times[x^d_0-\pi,x^d_0+\pi]\subset\mathbb{R}^d$ for $x_0=(x^1_0,\dots,x^d_0).$
\end{lemma}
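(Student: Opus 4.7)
The plan is to bound $G_{t,x_0,x}(s,z)=G(s,x_0,z)G(t-s,z,x)/G(t,x_0,x)$ by applying Lemma~\ref{L:G-bdd} to every factor and then matching the resulting quotient of Euclidean Gaussians against one of the bridges $p_{t,x_0,x+k}(s,z)$ with $k\in\Pi^d$. Concretely, I would upper-bound the two numerator kernels by $(2C_s)^d\,p_d(s,\myMod{z-x_0})$ and $(2C_{t-s})^d\,p_d(t-s,\myMod{z-x})$, and lower-bound the denominator by $C_t^d\,p_d(t,\myMod{x_0-x})$. Since $z-x_0\in\dom^d$ by hypothesis, $\myMod{z-x_0}=z-x_0$ and the first numerator simplifies at once.

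The key algebraic step is the following. Representing $x,x_0\in[-\pi,\pi]^d$, the differences $z-x$ and $x_0-x$ lie in $[-2\pi,2\pi]^d$, so there exist $k^*_\beta,k^*_\alpha\in\{-1,0,1\}^d$ with $\myMod{z-x}=z-x+2\pi k^*_\beta$ and $\myMod{x_0-x}=x_0-x+2\pi k^*_\alpha$. Setting $k\coloneqq-2\pi k^*_\beta\in\Pi^d$ aligns the $p_d(t-s,\cdot)$ factor with the Euclidean bridge of endpoint $x+k$, at the cost of replacing the denominator $p_d(t,\myMod{x_0-x})$ by $p_d(t,x_0-x+2\pi k^*_\beta)$. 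Since $\myMod{x_0-x}$ is by construction the minimum-norm representative of $x_0-x$ modulo $2\pi\mz^d$, one has $|\myMod{x_0-x}|\le|x_0-x+2\pi k^*_\beta|$, so the replacement only \emph{increases} the fraction, yielding
\[
\frac{p_d(s,z-x_0)\,p_d(t-s,\myMod{z-x})}{p_d(t,\myMod{x_0-x})}\;\le\;p_{t,x_0,x+k}(s,z)\;\le\;\sum_{k'\in\Pi^d}p_{t,x_0,x+k'}(s,z),
\]
and therefore
\[
G_{t,x_0,x}(s,z)\;\le\;\frac{(2C_s)^d(2C_{t-s})^d}{C_t^d}\sum_{k'\in\Pi^d}p_{t,x_0,x+k'}(s,z).
\]

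It remains to control the prefactor by $C(1+\sqrt t)^d$. Using item~(2) of Lemma~\ref{L:G-bdd}, $\max(1,\sqrt{t/(2\pi)})\le C_t\le 1+\sqrt{t/(2\pi)}$, together with the elementary inequalities $\sqrt s+\sqrt{t-s}\le\sqrt{2t}$ and $\sqrt{s(t-s)}\le t/2$, one expands $C_sC_{t-s}\le 1+\sqrt{t/\pi}+t/(4\pi)$ and divides by $C_t$; splitting into the regimes $t\le 1$ and $t\ge 1$ then yields $C_sC_{t-s}/C_t\le C'(1+\sqrt t)$ for some constant $C'$ independent of $s,t$. Raising to the $d$th power and absorbing the resulting $4^d$ delivers the stated bound.

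In my view, the only real subtlety is the choice of the shift $k=-2\pi k^*_\beta$ attached to the $z-x$ side (rather than to $x_0-x$): exactly this choice makes the substituted denominator $p_d(t,x_0-x+2\pi k^*_\beta)$ no larger than the original $p_d(t,\myMod{x_0-x})$, so the switch between them is free. Once this is set up, the rest is routine bookkeeping with the bounds already collected in Lemma~\ref{L:G-bdd}.
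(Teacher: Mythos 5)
Your proposal is correct and follows essentially the same route as the paper's proof: bound the three torus kernels via Lemma~\ref{L:G-bdd}, control the prefactor $C_sC_{t-s}/C_t$ by a constant times $1+\sqrt{t}$, and match the wrapped Gaussian $p\left(t-s,\myMod{z-x}\right)$ with a shifted Euclidean bridge $p_{t,x_0,x+k}$, $k\in\Pi^d$, noting that the induced replacement of $p\left(t,\myMod{x_0-x}\right)$ in the denominator only increases the quotient because $\myMod{x_0-x}$ is the minimal-norm representative of its residue class --- your uniform minimality observation neatly replaces the paper's three-case verification in $d=1$ and lets you work in general $d$ directly. One small slip that does not affect the argument: since $z$ ranges over the box centered at $x_0$ (not over $[-\pi,\pi]^d$), the difference $z-x$ lies in $[-3\pi,3\pi]^d$ rather than $[-2\pi,2\pi]^d$; shifts by $2\pi k^*$ with $k^*\in\{-1,0,1\}^d$ still reproduce $\left|\myMod{z-x}\right|$, so the set $\Pi^d$ remains sufficient.
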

\begin{proof}
  It suffices to prove the case $d=1$. Fix arbitrary $t>s>0$. Let $x,x_0,z\in
  \mathbb{T}$ with $z-x_0\in \mathbb{T}$. From~\eqref{E:G-bdd}, we see that
  \begin{align*}
    G_{t,x_0,x}(s,z)
    = \frac{G(s,z-x_0)G\left(t-s, \myMod{x-z}\right)}{G(t,\myMod{x-x_0})}
    \le 4\frac{C_s C_{t-s}}{C_t} \frac{p(s,z-x_0) p\left(t-s,\myMod{x-z}\right)}{p\left(t,\myMod{x-x_0}\right)},
  \end{align*}
  where we have used the fact that $z-x_0\in \mathbb{T}$ in the equality. By the
  properties of $C_t$ given in Lemma~\ref{L:G-bdd}, we have that
  \begin{align*}
    \frac{C_s C_{t-s}}{C_t} \le \frac{C_t C_{t}}{C_t} = C_t \le 1+\sqrt{\frac{t}{2\pi}}.
  \end{align*}
  Hence, for some universal constant $C>0$,
  \begin{align*}
    G_{t,x_0,x}(s,z)
    & \le C\left(1+\sqrt{t}\right) \frac{p(s,z-x_0) p\left(t-s,\myMod{x-z}\right)}{p\left(t,\myMod{x-x_0}\right)}.
  \end{align*}

  In order to determine the values of the modulo functions, by taking into
  account of the fact that $x,x_0,z\in [-\pi,\pi]$, we consider the following
  three cases:

   \bigskip\noindent\textbf{Case I:~} $x-x_0\in (\pi,2\pi]$. In this case, as
   illustrated in Figure~\ref{F:CaseI}, we have
   \begin{align*}
     \myMod{x-x_0} = x-x_0 - 2\pi \quad \text{and} \quad
     \myMod{x-z} \in \left\{x-z+k:\: k=-2\pi,0\right\}.
   \end{align*}
   Hence,
   \begin{align}\label{E_:RatioHeat}
     \frac{p(s,z-x_0) p\left(t-s,\myMod{x-z}\right)}{p\left(t,\myMod{x-x_0}\right)}
     & \le \frac{p(s,z-x_0)}{p\left(t,x-x_0-2\pi\right)} \sum_{k\in \{-2\pi,0\}} p\left(t-s,x-z+k\right) \notag\\
     & =  \sum_{k \in \{-2\pi,0\}}\frac{p(t,x-x_0+k)}{p\left(t,x-x_0-2\pi\right)} \times p_{t,x_0,x+k}(s,z).
   \end{align}
   One can routinely check that in this case, $|x-x_0+k|\ge |x-x_0-2\pi|$
   for $k\in \{0,-2\pi\}$. Hence, the ratio of two heat kernels
   in~\eqref{E_:RatioHeat} is bounded by one. Therefore,
   \begin{align*}
    \frac{p(s,z-x_0) p\left(t-s,\myMod{x-z}\right)}{p\left(t,\myMod{x-x_0}\right)}
     & \le  \sum_{k\in \{-2\pi,0\}} p_{t,x_0,x+k}(s,z).
   \end{align*}

   \begin{figure}[htpb]
    \centering
    \begin{tikzpicture}[x=4em, y=4em]
    \tikzset{>=latex}
      \def\radius{1}
      \draw[thick] (0,-\radius) arc (-90:269:\radius);
      \draw[thick] (0,+1) --++ (0,0.07) node[above] {$0$} --++(0,-0.14);
      \draw[thick] (-0.03,-1) --++ (0,+0.13) --++(-0.05,0);
      \draw[thick] (-0.03,-1) --++ (0,-0.13) --++(-0.05,0);
      \draw[thick] (+0.00,-1) --++ (0,+0.13) --++(+0.05,0);
      \draw[thick] (+0.00,-1) --++ (0,-0.13) --++(+0.05,0);
      \node at (-0.25,-1.15) {$-\pi$};
      \node at (+0.12,-0.85) {$\pi$};

      \begin{scope}[rotate = -60]
        \draw[thick] (0,-1) --++ (0,-0.10) --++(0,0.20) node [left, yshift = -6pt, xshift = -6pt, blue] {$x_0$};
        \draw[thick] (0,+1) --++ (0,-0.10) --++(0,0.20);
        \draw[dotted] (0,-1) -- (0,1);
      \end{scope}
      \begin{scope}[rotate = +60]
        \def\tick{0.35}
        \draw[thick] (0,-1) --++ (0,+0.10) --++(0,-0.10-\tick) node [right, yshift = -2pt, xshift = 0pt] {$x$};
        \draw[thick] (0,+1) --++ (0,-0.10) --++(0,0.20);
        \draw[dotted] (0,-1) -- (0,1);
      \end{scope}

      \begin{scope}[rotate = +150]
        \draw[line width = 2.3pt,blue, opacity =0.5] (1.03*\radius,0) arc (0:118:1.03*\radius);
      \end{scope}

      \begin{scope}[rotate = +30]
        \draw[line width = 2.3pt,red, opacity =0.5] (0.97*\radius,0) arc (0:238:0.97*\radius);
      \end{scope}

      \begin{scope}[rotate = -30]
        \def\tick{0.18}
        \draw[thick] (0,1) --++ (0,-0.10) node[left, yshift = -5pt, xshift = +7pt, red] {$z_1$} --++(0,+0.10+\tick);
      \end{scope}
      \begin{scope}[rotate = 150]
        \def\tick{0.35}
        \draw[thick] (0,1) --++ (0,-0.10) node[above, yshift = -2pt, xshift = +5pt, red] {$z_2$} --++(0,+0.07+\tick);
      \end{scope}

      \begin{scope}[rotate = -30]
        \draw[<->,dashed,thick] (1.15*\radius,0) arc (0:90:1.15*\radius);
      \end{scope}
      \node at (2.2,0.3) {$\myMod{x-z_1} = x-z_1$};

      \begin{scope}[rotate = -120]
        \draw[<->,dashed,thick] (1.28*\radius,0) arc (0:90:1.28*\radius);
      \end{scope}

      \node at (2.0,-1.26) {$\myMod{x-z_2} = x-z_2-2\pi$};
    \end{tikzpicture}

    \caption{Illustration of Case I in the proof of Lemma~\ref{L:BMBridge}.
    Since $x-x_0\in (\pi,2\pi]$ and $x,x_0\in [-\pi,\pi]$, we see that $x$ has
  to situate in $(0,\pi]$. The possible positions of $x_0$ are given in the blue
arc. Since $z-x_0\in [-\pi,\pi]$, the possible positions of $z$ are highlighted
in the red arc. When computing $\myMod{x-z}$, we have two cases illustrated by
$z_1$ and $z_2$.}

    \label{F:CaseI}
  \end{figure}
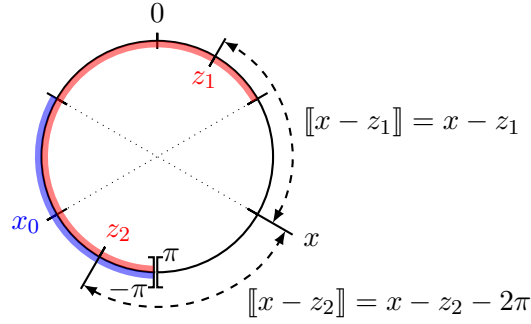

   \bigskip\noindent\textbf{Case II:~} $x-x_0\in [-\pi,\pi]$. In this case, we
   have that
   \begin{align*}
     \myMod{x-x_0} = x-x_0 \quad \text{and} \quad
     \myMod{x-z} \in \left\{x-z +k:\: k=-2\pi,0,2\pi\right\}.
   \end{align*}
   By the same arguments as in Case~I, one can show that the ratio of two heat
   kernels in~\eqref{E_:RatioHeat} is bounded by one and hence,
   \begin{align*}
     \frac{p(s,z-x_0) p\left(t-s,\myMod{x-z}\right)}{p\left(t,\myMod{x-x_0}\right)}
     & \le  \sum_{k\in \{-2\pi,0,2\pi\}} p_{t,x_0,x+k}(s,z).
   \end{align*}

   \bigskip\noindent\textbf{Case III:~} $x-x_0\in [-2\pi,-\pi)$. This is the
   symmetric case of Case I. In this case,
   \begin{align*}
     \myMod{x-x_0} = x-x_0+2\pi \quad \text{and} \quad
     \myMod{x_0-x+z} \in \left\{x-x_0+z+k:\: k=0,2\pi\right\}.
   \end{align*}
   By similar arguments, we see that
   \begin{align*}
     \frac{p(s,z) p\left(t-s,\myMod{x_0+z-x}\right)}{p\left(t,\myMod{x_0-x}\right)}
     & \le  \sum_{k \in \{0,2\pi\}} p_{t,x_0,x+k}(s,z).
   \end{align*}

   Combining the above three cases proves Lemma~\ref{L:BMBridge} for the case
   $d=1$. The generalization to the case for $d\ge 2$ is straightforward and we
   omit the details here.
\end{proof}

\section{Colored noise on flat Torus}\label{S:Noise}

In this section, we construct a family of intrinsic Gaussian noises on the flat
torus $\mathbb{T}^d$ that we call \textit{colored noise} on $\mathbb{T}^d$ that
is white in time. Recall that the eigenvectors of the Laplace $\triangle$ are
given by $\exp{(ik\cdot x)}, k\in\mz^d.$ For any $\varphi\in L^2(\mt^d)$, there
is a unique decomposition
\begin{align*}
  \varphi(x) = (2\pi)^{-d/2}\sum_{k\in\mz^d}a_k\,e^{ik\cdot x},
\end{align*}
where $a_k$ are the Fourier coefficients of $\varphi$:
\begin{align*}
  a_k = \mathcal{F}(\varphi)(k) \coloneqq (2\pi)^{-d/2}\int_{\mathbb{T}^d} \varphi(x) e^{-i k\cdot x}\ud x,
  \quad \text{for all $k\in \mathbb{Z}^d$}.
\end{align*}
In particular, $a_0=(2\pi)^{-d/2}\int_{\mathbb{T}^d}\varphi(x) \ud x$.

We introduce a family of Gaussian noises $\dot{W}$ on $\mt^d$ with parameters
$\alpha$, $\rho\ge 0$ as follows. Let $(\Omega, \mathcal{F},\mathbb{P})$ be a
complete probability space such that for any $\varphi(x)$ and $\psi(x)$ on
$\mt^d$ and $t,s>0$, both $\dot{W}\left(1_{[0,t]}\varphi\right)$ and
$\dot{W}\left(1_{[0,s]}\psi\right)$ are centered Gaussian random variables with
covariance given by
\begin{align}\label{E:NoiseCov}
  \begin{gathered}
    \me \left(\dot{W}\left(1_{[0,t]}\varphi\right)\dot{W}\left(1_{[0,s]}\psi\right)\right)
    = (s\wedge t) \langle\phi,\psi\rangle_{\alpha,\rho} \quad \text{with}\\
    \langle\phi,\psi\rangle_{\alpha,\rho}
    \coloneqq \rho a_0\bar{b}_0+\sum_{k\in\mz^d_*} \frac{a_k\bar{b}_k}{|k|^{2\alpha}} \quad \text{and} \quad \mz_*^d \coloneqq \mz^d \setminus \{0\},
  \end{gathered}
\end{align}
where $a_k$'s and $b_k$'s are the Fourier coefficients of $\varphi$ and $\psi$,
respectively.

For $\rho>0$, let $\mathcal{H}^{\alpha,\rho}$ be the completion of
$L^2(\mathbb{T}^d)$ under $\langle\cdot,\cdot\rangle_{\alpha,\rho}$. Then,
$(\Omega, \mathcal{H}^{\alpha,\rho},\mathbb{P})$ gives an abstract Wiener space.
When $\rho=0$, some special care is needed in order to identify a suitable
Hilbert space $\mathcal{H}^{\alpha,0}$. Let $L^2_0$ be the space of $L^2$
functions on $\mathbb{T}^d$ such that $a_0=0$. Denote by
$\mathcal{H}^{\alpha}_0$ the completion of $L^2_0$ under
$\langle\cdot,\cdot\rangle_{\alpha,\rho}$. One could have set
$\mathcal{H}^{\alpha,0}=\mathcal{H}^{\alpha}_0$. However, when solving SPDEs on
torus, it is desirable to consider Wiener integrals
$\dot{W}\left(1_{[0,t]}\varphi\right)$ where $\varphi$ is a function on the
torus such that $a_0=(2\pi)^{-d/2}\int_{\mathbb{T}^d}\varphi(x) \ud x \not=0$.
For this purpose, consider $\mathcal{H}^{\alpha}_0+\mathbb{R} \coloneqq
\{\varphi+c: \varphi\in\mathcal{H}^{\alpha}_0, \textnormal{and}\
c\in\mathbb{R}\}$. We can identify $\mathcal{H}^{\alpha}_0+\mathbb{R}$ with
$\mathcal{H}^{\alpha}_0$ through the equivalence relation $\sim$, in which
$\varphi\sim\psi$ if $\varphi-\psi$ is a constant. Finally, we set
\begin{align*}
  \mathcal{H}^{\alpha,0}=(\mathcal{H}^{\alpha}_0+\mathbb{R})/ \sim.
\end{align*}
With this construction, it is clear that
$\dot{W}\left(1_{[0,t]}\varphi\right)=\dot{W}\left(1_{[0,t]}(\varphi+c)\right)$
for any $\varphi\in\mathcal{H}^{\alpha,0}$ and $c\in\mathbb{R}$. Throughout the
rest of our discussion, we will also adopt the short-hand $\mathcal{H}^\alpha$
for $\mathcal{H}^{\alpha,0}$.

\begin{remark}
  It is clear from~\eqref{E:NoiseCov} that $L^2(\mathbb{T}^d)\subset
  \mathcal{H}^{\alpha,\rho}\subset\mathcal{H}^{\beta,\rho}$ for
  $0\leq\alpha<\beta$. Moreover, the colored noise becomes the white noise on
  torus when $\rho=1$ and $\alpha=0$.
\end{remark}

Recall that $f_{\alpha,\rho}$ and $f_\alpha$ are defined in~\eqref{E:f} and
\eqref{E:f_alpha} respectively.

\begin{lemma}\label{L:Cov}
  Fix arbitrary $\alpha>0$ and $\rho\geq0$. Let $\Theta = \Theta_{1,d}$ be the
  constant given in~\eqref{E:G-Inf}. The following statements hold:
  \begin{enumerate}[(i)]
    \item $\int_{\mathbb{T}^d}f_\alpha(x)\ud x = 0$;
    \item $f_\alpha(x)$ assume both positive and negative values;
    \item $f_{\alpha,\rho}(x)$ is bounded from below:
      \begin{align*}
        f_{\alpha,\rho}(x)\geq (2\pi)^{-d/2} \left(\rho -\frac{1}{\Gamma(\alpha+1) (2\pi)^{d/2}} - (2\pi)^{d/2} 2^\alpha\Theta\right),
        \quad \textnormal{for all $x\in\mathbb{T}^d$;}
      \end{align*}
    \item $f_{\alpha,\rho}(\cdot)$ is nonnegative when $\rho\ge
      (2\pi)^{-d/2}\Gamma(\alpha+1)^{-1} + (2\pi)^{d/2}2^\alpha\Theta$;
    \item The Fourier coefficients for $f_{\alpha,\rho}$ are given by
      \begin{align}\label{E:theta}
        \theta_n \coloneqq \frac{1}{(2\pi)^{d/2}}\int_{\mathbb{T}^d}f_{\alpha,\rho}(x)e^{-i n\cdot x}\ud x =
        \begin{dcases}
          \, \frac{\rho}{(2\pi)^{d/2}}        & \text{if $n = 0$} \\
          \frac{1}{|n|^{2\alpha}(2\pi)^{d/2}} & \text{if $n\in \mathbb{Z}^d_*$}
        \end{dcases};
      \end{align}
    \item For any $\varphi$ and $\psi\in \mathcal{H}^{\alpha,\rho}$, it holds
      that
      \begin{align}\label{E:NoiseCov2}
        \InPrd{\varphi,\psi}_{\alpha,\rho} = \iint_{\mathbb{T}^{2d}} \varphi(x) f_{\alpha,\rho}(x,y) \psi(y) \ud x\ud y.
      \end{align}
      As a consequence, the noise introduced in~\eqref{E:NoiseCov} can be
      equivalently expressed by
      \begin{align*}
        \me \left(\dot{W}(1_{[0,t]}\varphi)\dot{W}(1_{[0,s]}\psi)\right)
         =\left(t\wedge s\right)\iint_{\mathbb{T}^{2d}}\varphi(x)f_{\alpha,\rho}(x,y)\psi(y) \ud x  \ud y .
      \end{align*}
  \end{enumerate}
\end{lemma}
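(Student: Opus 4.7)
The plan is to prove (v) first, since the remaining claims all follow quickly from it together with the heat-kernel estimates of Lemma~\ref{L:G-bdd}. The heat kernel on $\mathbb{T}^d$ admits the spectral expansion
\[
  G(t,x) = \frac{1}{(2\pi)^d}\sum_{n\in\mathbb{Z}^d} e^{-|n|^2 t/2}\,e^{in\cdot x},
\]
so $G(t,\cdot) - (2\pi)^{-d}$ is exactly the sum over $n \ne 0$, and $\int_{\mathbb{T}^d}(G(t,x) - (2\pi)^{-d})e^{-in\cdot x}\,dx$ equals $e^{-|n|^2 t/2}$ for $n \ne 0$ and $0$ for $n = 0$. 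I would then swap this spatial Fourier integral with the $t$-integral in \eqref{E:f}; Fubini--Tonelli is justified by splitting at $t=1$, using the uniform bound $\|G(t,\cdot) - (2\pi)^{-d}\|_{L^1(\mathbb{T}^d)} \le 2$ together with the integrability of $t^{\alpha-1}$ on $(0,1)$, and the exponential bound \eqref{E:G-Inf} on $(1,\infty)$. The Gamma identity $\int_0^\infty t^{\alpha-1}e^{-|n|^2 t/2}\,dt = \Gamma(\alpha)\,2^\alpha|n|^{-2\alpha}$ then yields the stated value of $\theta_n$ (up to the constant normalization implicit in the author's convention).

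Given (v), claim (i) is immediate from $\theta_0 = 0$ when $\rho = 0$. For claim (vi) I would expand $\varphi, \psi$ in Fourier series; by the translation invariance $f_{\alpha,\rho}(x,y) = f_{\alpha,\rho}(x-y)$ and Parseval, the double integral collapses to $(2\pi)^{d/2}\sum_k \theta_k a_k\bar{b}_k$, which matches $\rho a_0 \bar{b}_0 + \sum_{k \ne 0} a_k\bar{b}_k/|k|^{2\alpha}$ by (v). This identity is first established for trigonometric polynomials and then extended to all of $\mathcal{H}^{\alpha,\rho}$ by density. Claim (ii) follows immediately: by (v) with $\rho = 0$, $\theta_k \ne 0$ for every $k \ne 0$, so $f_\alpha$ is not identically zero, yet $\int f_\alpha = 0$ by (i), hence $f_\alpha$ must assume both positive and negative values.

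For the lower bound (iii), I would split the defining integral $\int_0^\infty t^{\alpha-1}(G(t,x) - (2\pi)^{-d})\,dt$ at $t = 1$. On $(0,1)$, $G \ge 0$ gives $G(t,x) - (2\pi)^{-d} \ge -(2\pi)^{-d}$, contributing $-1/(\Gamma(\alpha+1)(2\pi)^d)$ after integration and division by $\Gamma(\alpha)$. On $(1,\infty)$, \eqref{E:G-Inf} with $\epsilon = 1$ and $\Theta = \Theta_{1,d}$ gives $|G(t,x) - (2\pi)^{-d}| \le \Theta\, e^{-t/2}$, so using $\int_0^\infty t^{\alpha-1}e^{-t/2}\,dt = 2^\alpha \Gamma(\alpha)$, the tail is bounded below by $-2^\alpha \Theta$. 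Adding the term $\rho/(2\pi)^d$ and regrouping produces the stated bound, and (iv) is then a direct corollary: the lower bound in (iii) is nonnegative precisely under the stated hypothesis on $\rho$.

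The principal obstacle is the absolute integrability needed for the Fubini--Tonelli step in (v); however, the same splitting-at-$t=1$ strategy used to prove (iii) supplies uniform-in-$x$ bounds on $|G(t,x) - (2\pi)^{-d}|$, so it is efficient to record those estimates once and cite them for both claims. Beyond this, the entire lemma is essentially a bookkeeping exercise in Fourier coefficients and elementary estimates on the heat kernel.
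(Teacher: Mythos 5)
Your proposal is correct and follows essentially the same route as the paper: the spectral expansion of $G$ plus Fubini (justified by splitting at $t=1$, positivity of $G$, and \eqref{E:G-Inf}) for (v), Plancherel/Parseval for (vi), and the same split-at-$t=1$ estimates for (iii)--(iv); the only organizational difference is that you prove (v) first and deduce (i)--(ii) from it, whereas the paper proves (i) by dominated convergence and then uses it for $\theta_0$. The factor $2^\alpha$ you flag in $\theta_n$ is a normalization slip in the paper itself (its computation writes $e^{-|n|^2 t}$ although \eqref{E:HeatSpec} has $e^{-|k|^2 t/2}$, and the $2^\alpha$ duly reappears in the proof of Theorem~\ref{T:Lbd F-K}), so it is not a gap in your argument.
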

Note that the lower bounds in parts (iii) and (iv) are not optimal.
\begin{proof}
  By writing
  \begin{align}\label{E:I_1-I_2}
    f_\alpha(x) = \frac{1}{\Gamma(\alpha)}\int_0^\infty t^{\alpha-1}\left(G(t,x)-(2\pi)^{-d}\right) \ud t =\int_0^1+\int_1^\infty=I_1+I_2,
  \end{align}
  thanks to the heat kernel estimate in~\eqref{E:G-Inf}, one can apply the
  dominated convergence theorem to switch the $\ud x$- and the $\ud
  t$-integrals. This yields part (i). Part (ii) is an immediate consequence of
  part (i). As for part (iii), from~\eqref{E:I_1-I_2}, we see that
  \begin{align*}
    I_1
    =   \frac{1}{\Gamma(\alpha)}\int_0^1 t^{\alpha-1} \left(G(t,x)-\frac{1}{(2\pi)^d}\right) \ud t
    \ge -\frac{1}{\Gamma(\alpha)(2\pi)^{d}}\int_0^1 t^{\alpha-1} \ud t
    =   -\frac{1}{\Gamma(\alpha+1)(2\pi)^d},
  \end{align*}
  where the inequality is due to the positivity of the heat kernel. As for
  $I_2$, for the constants $\Theta = \Theta_{1,d}$ given in~\eqref{E:G-Inf} and
  $\gamma = 1/2$,
  \begin{align*}
    I_2
    & = \frac{1}{\Gamma(\alpha)}\int_1^\infty t^{\alpha-1}e^{-\gamma t} e^{\gamma t}\left(G(t,x)-\frac{1}{(2\pi)^d}\right) \ud t \\
    & \ge - \frac{\Theta}{\Gamma(\alpha)}\int_0^\infty t^{\alpha-1}e^{-\gamma t} \ud t
      \ge - \Theta \gamma^{-\alpha}.
  \end{align*}
  This proves part (iii). Part (iv) is a direct consequence of part (iii).

  As for part (v), by part (i), we see that $\theta_0 = (2\pi)^{-d/2}\rho$. For $n\ne 0$,
  using the heat kernel represented via the eigenvectors of the Laplace
  operator, namely,
  \begin{align}\label{E:HeatSpec}
    G(t,x,y)=(2\pi)^{-d}\sum_{k\in\mathbb{Z}^d}{\color{blue}e^{-\frac{|k|^2}{2}t}}e^{ik\cdot x}e^{-ik\cdot y},
  \end{align}
  by an application of Fubini's theorem, we see that
  \begin{align*}
    \theta_n = \frac{1}{\Gamma(\alpha)}\int_0^\infty t^{\alpha-1} \left((2\pi)^{-d}e^{-|n|^2t} \int_{\mathbb{T}^d}(2\pi)^{-d/2}\ud x - 0\right)
             = \frac{1}{|n|^{2\alpha}(2\pi)^{d/2}}.
  \end{align*}
  This proves part (v). Finally, denote the double integral
  in~\eqref{E:NoiseCov2} by $I$. Then by the Plancherel theorem, we see that
  \begin{align*}
    I & = \InPrd{\varphi, \overline{f_{\alpha,\rho}*\psi}}_{\mathbb{T}^{d}}
        = \sum_{k\in \mathbb{Z}^d} \mathcal{F}(\varphi)(k) \overline{\mathcal{F}(f_{\alpha,\rho}*\psi)(k)} \\
      & = \sum_{k\in \mathbb{Z}^d} \mathcal{F}(\varphi)(k) \left[(2\pi)^{d/2}\times \overline{\mathcal{F}(f_{\alpha,\rho})(k)}\times \overline{\mathcal{F}(\psi)(k)}\right]\\
      & = (2\pi)^{d/2} \sum_{n\in \mathbb{Z}^d} a_n \overline{b}_n \theta_n,
  \end{align*}
  where $a_n$ and $b_n$ are Fourier coefficients of $\varphi$ and $\psi$,
  respectively, and $\theta_n\ge 0$ are given in~\eqref{E:theta}. This completes
  the proof of Lemma~\ref{L:Cov}.
\end{proof}

Now we introduce some temporal functions that are determined by the noise and
will appear in the Picard iterations in the proof of the main
result---Theorem~\ref{T:Main}. Define
\begin{align}\label{E:k1}
  k_1(s) = k_1(s;\alpha,\rho) \coloneqq \sum_{k\in\mathbb{Z}^d}\mathcal{F}(f_{\alpha,\rho})(k)e^{-s|k|^2}.
\end{align}
The next lemma gives some estimates on $k_1(s)$.

\begin{lemma}\label{L:k1}
  \begin{enumerate}
    \item For any $\beta>\max\left(-\alpha+d/2,0\right)$ and $s>0$, it holds
      that
      \begin{align}\label{E:k1-Ubd}
        k_1(s;\alpha,\rho) \le \frac{\rho}{(2\pi)^{d /2}}+ C_{\alpha,\beta,d} \: s^{-\beta}
        \quad \text{with}\quad
        C_{\alpha,\beta,d} \coloneqq \frac{\beta^\beta e^{-\beta}}{(2\pi)^{d/2}}\sum_{k\in\mathbb{Z}^d_*}\frac{1}{|k|^{2(\alpha+\beta)}} <\infty.
      \end{align}
    \item Under Dalang's condition~\eqref{E:Dalang}, namely, $2(\alpha+1)>d$,
      for all $t\ge 0$ and $\gamma>0$, we have that
      \begin{align}\label{E:k1-Int}
        \int_0^tk_1(s;\alpha,\rho) \ud s
        \le  \frac{\rho\, t}{(2\pi)^{d/2}} + C_{\alpha,d},
        \quad \text{where} \quad C_{\alpha,d}\coloneqq \frac{1}{(2\pi)^{d/2}}\sum_{k\in\mathbb{Z}^d_*}\frac{1}{|k|^{2\alpha+2}} <\infty,
      \end{align}
      and
      \begin{align}\label{E:k1-Laplace}
        \int_0^\infty e^{-\gamma s}  k_1(s;\alpha,\rho) \ud s
        = \frac{\rho}{(2\pi)^{d/2}} \frac{1}{\gamma} + \frac{1}{(2\pi)^{d/2}}\sum_{k\in\mathbb{Z}^d_*}\frac{1}{|k|^{2\alpha} \left(|k|^2+\gamma\right)} <\infty.
      \end{align}
  \end{enumerate}
\end{lemma}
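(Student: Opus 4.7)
The whole lemma reduces to a term-by-term analysis of the series that defines $k_1(s)$, after substituting the explicit Fourier coefficients $\theta_n$ computed in part~(v) of Lemma~\ref{L:Cov}. Writing out the $n=0$ term separately,
\begin{align*}
k_1(s;\alpha,\rho)=\frac{\rho}{(2\pi)^{d/2}}+\frac{1}{(2\pi)^{d/2}}\sum_{k\in\mathbb{Z}^d_*}\frac{e^{-s|k|^2}}{|k|^{2\alpha}},
\end{align*}
so all three claims become statements about the remaining sum over $\mathbb{Z}^d_*$.

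For part~(1), the plan is to use the elementary one-variable bound obtained from the fact that $x\mapsto x^\beta e^{-x}$ attains its maximum $\beta^\beta e^{-\beta}$ on $(0,\infty)$ at $x=\beta$, which yields
\begin{align*}
e^{-x}\le \beta^\beta e^{-\beta}\,x^{-\beta},\qquad x>0.
\end{align*}
Applying this with $x=s|k|^2$ and summing gives the estimate, and the resulting series $\sum_{k\in\mathbb{Z}^d_*}|k|^{-2(\alpha+\beta)}$ converges precisely when $2(\alpha+\beta)>d$, i.e.\ $\beta>d/2-\alpha$; combined with the requirement $\beta>0$ used in the elementary inequality this gives the range $\beta>\max(-\alpha+d/2,0)$ stated in the lemma.

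For part~(2), both assertions follow from Tonelli's theorem applied to the nonnegative integrand $e^{-s|k|^2}/|k|^{2\alpha}$ (times $1$ or $e^{-\gamma s}$), which legitimately interchanges the sum and the time integral. The identity
\begin{align*}
\int_0^t e^{-s|k|^2}\,\mathrm{d}s=\frac{1-e^{-t|k|^2}}{|k|^2}\le\frac{1}{|k|^2}
\end{align*}
produces the bound in~\eqref{E:k1-Int}, with remaining tail $\sum_{k\in\mathbb{Z}^d_*}|k|^{-2(\alpha+1)}$ finite exactly under Dalang's condition $2(\alpha+1)>d$. Likewise, $\int_0^\infty e^{-(\gamma+|k|^2)s}\,\mathrm{d}s=(\gamma+|k|^2)^{-1}$ yields the equality in~\eqref{E:k1-Laplace}, whose tail behaves like $|k|^{-2(\alpha+1)}$ as $|k|\to\infty$ and is therefore summable by the same condition.

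Since each step is either algebraic manipulation, a pointwise bound, or a Tonelli interchange of nonnegative quantities, there is no serious obstacle; the only point that requires any care is verifying that the parameter restriction $\beta>\max(-\alpha+d/2,0)$ in part~(1) is exactly what is needed to make the series converge while keeping the $x^{-\beta}$ bound meaningful, and that Dalang's condition~\eqref{E:Dalang} is precisely the threshold for convergence of the $|k|^{-2(\alpha+1)}$ sums appearing in part~(2).
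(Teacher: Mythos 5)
Your proposal is correct and follows essentially the same route as the paper: part (1) is the same optimization (the paper maximizes $-sr^2+2\beta\log r$ over $r$, which is exactly your bound $x^\beta e^{-x}\le\beta^\beta e^{-\beta}$ with $x=s|k|^2$), and part (2) is the same termwise integration of the nonnegative series, bounding $\int_0^t e^{-s|k|^2}\,\ud s$ by $|k|^{-2}$ and evaluating the Laplace transform exactly, with Dalang's condition giving convergence.
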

\begin{proof}
  (1) From~\eqref{E:theta}, we see that
  \begin{align}\label{E_:k1}
    k_1(s) & = \frac{\rho}{(2\pi)^{d/2}} + \frac{1}{(2\pi)^{d/2}}\sum_{k\in\mathbb{Z}^d_*}\frac{1}{|k|^{2\alpha}}e^{-s|k|^2} \\
           & = \frac{\rho}{(2\pi)^{d/2}} + \frac{1}{(2\pi)^{d/2}}\sum_{k\in\mathbb{Z}^d_*}\frac{1}{|k|^{2(\alpha+\beta)}}e^{-s|k|^2 +2\beta \log\left(|k|\right)}. \notag
  \end{align}
  Denote $g_{s,\beta}(r)\coloneqq -s r^2 + 2\beta \log (r)$. By solving
  $g_{s,\beta}'(r)=0$, we find that $g_{s,\beta}(r)$ is maximized at $r_0 =
  \sqrt{\beta/s}$, and the maximum value is equal to $g_{s,\beta}(r_0) =
  \beta^\beta s^{-\beta} e^{-\beta}$. The condition that $\beta>- \alpha + d/2$
  implies that the remaining summation in $k$ is finite.

  (2) From the expression for $k_1(s)$ in~\eqref{E_:k1}, one easily sees that
  $k_1(s)$ is non-increasing and nonnegative. Moreover, $k_1(s)$ is integrable
  at $s=0$ because from~\eqref{E_:k1},
  \begin{align*}
    0 \le \int_0^tk_1(s) \ud s
      \le & \frac{\rho\, t}{(2\pi)^{d/2}} + \frac{1}{(2\pi)^{d/2}}\sum_{k\in\mathbb{Z}^d_*}\frac{1}{|k|^{2\alpha}}\int_0^\infty \ud s \, e^{-s|k|^2} \\
       =  & \frac{\rho\, t}{(2\pi)^{d/2}} + \frac{1}{(2\pi)^{d/2}}\sum_{k\in\mathbb{Z}^d_*}\frac{1}{|k|^{2\alpha+2}} <\infty,
  \end{align*}
  where the last inequality is due to Dalang's condition~\eqref{E:Dalang}. This
  proves~\eqref{E:k1-Int}. The equality in ~\eqref{E:k1-Laplace} can be proved
  in the same way. This proves Lemma~\ref{L:k1}.
\end{proof}

Let $f_\alpha^*$ denote the Riesz kernel on $\R^d$ and $\widehat{f}_\alpha^*$
be its Fourier transform, i.e.,
\begin{align}\label{E:Riesz}
  f_\alpha^*(x) \coloneqq |x|^{-d+2\alpha} \quad \text{and} \quad
  \widehat{f}_\alpha^*(\xi) = c_{d,\alpha}|\xi|^{-2\alpha}, \quad \text{for all $x$ and $\xi\in \R^d$.}
\end{align}
Similar to~\eqref{E:k1}, define
\begin{align}\label{E:k2}
  k_2(s;\alpha)
  = k_2(s)
  \coloneqq \int_{\mr^d}\widehat{f}_\alpha^* (\xi)\exp\left(-\frac{s|\xi|^2}{2}\right) \ud\xi
  = C_{d,\alpha}\, s^{\alpha-d/2},
\end{align}
where the last equality is an easy exercise (see, e.g., Example~1.2
of~\cite{chen.kim:19:nonlinear}). Hence, Dalang's condition~\eqref{E:Dalang}
ensures the integrability of $k_2(s)$ at $s=0$ and
\begin{align}\label{E:k2-Laplace}
  \int_0^\infty e^{-\gamma s} k_2(s;\alpha) \ud s
  = C_{d,\alpha}' \gamma ^{-\alpha +\frac{d}{2}-1} \quad \text{for all $\gamma>0$}.
\end{align}
Define $h_0(t; \alpha,\rho)\coloneqq 1$ and inductively for $n \ge 1$,
\begin{align}\label{E:hn}
  h_{n+1}(t;\alpha,\rho) = h_{n+1}(t) \coloneqq \int_0^t h_n(t-s)\big(k_1(s)+k_2(s)+1\big) \ud s.
\end{align}

The following lemma can be proved in the same way as Lemma~2.6
of~\cite{chen.kim:19:nonlinear} with $k(s)$ there replaced by $k_1(s)+k_2(s)+1$.

\begin{lemma}\label{L:h-inc}
  All functions $h_n(\cdot)$, $n\geq 1$, defined in~\eqref{E:hn} are
  nondecreasing on $\R_+$.
\end{lemma}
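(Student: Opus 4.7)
The plan is to prove the statement by a straightforward induction on $n$, carried out in tandem with the auxiliary claim that $h_n \geq 0$. The key inputs are three nonnegativity facts already at hand: from the explicit series in~\eqref{E_:k1} one has $k_1(s) \geq 0$ for every $s>0$; from~\eqref{E:k2} one has $k_2(s) = C_{d,\alpha}\, s^{\alpha-d/2} \geq 0$; and $h_0 \equiv 1 \geq 0$. I would set $\Phi(s) \coloneqq k_1(s) + k_2(s) + 1 \geq 0$ so the recursion~\eqref{E:hn} reads $h_{n+1}(t) = \int_0^t h_n(t-s)\Phi(s)\,\ud s$.

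The base case $n=1$ is immediate: $h_1(t) = \int_0^t \Phi(s)\,\ud s$ is nondecreasing and nonnegative since $\Phi \geq 0$. For the inductive step, assume both $h_n \geq 0$ and $h_n$ is nondecreasing. Nonnegativity of $h_{n+1}$ is then immediate from the recursion, since all factors in the integrand are nonnegative. For monotonicity, take $0 \leq t_1 < t_2$ and split $[0, t_2] = [0,t_1] \cup [t_1, t_2]$ to obtain
\begin{align*}
h_{n+1}(t_2) - h_{n+1}(t_1)
= \int_0^{t_1} \bigl[h_n(t_2-s) - h_n(t_1-s)\bigr]\Phi(s)\,\ud s
+ \int_{t_1}^{t_2} h_n(t_2-s)\Phi(s)\,\ud s.
\end{align*}
The first integrand is nonnegative because $h_n$ is nondecreasing (inductive hypothesis) and $\Phi \geq 0$; the second is nonnegative because $h_n \geq 0$ (inductive hypothesis) and $\Phi \geq 0$. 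Hence $h_{n+1}(t_2) \geq h_{n+1}(t_1)$, closing the induction.

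No genuine obstacle is anticipated; the only mild subtlety is that the boundary piece $\int_{t_1}^{t_2}$ forces one to track nonnegativity of $h_n$ in tandem with monotonicity, which is why I run the double induction from the outset rather than trying to deduce monotonicity alone. This is exactly the mechanism behind the analogous Lemma~2.6 of~\cite{chen.kim:19:nonlinear} cited in the statement, with the kernel $k(s)$ there replaced by $\Phi(s) = k_1(s) + k_2(s) + 1$.
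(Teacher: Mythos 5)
Your proof is correct, and it is essentially the argument the paper has in mind: the paper simply defers to Lemma~2.6 of~\cite{chen.kim:19:nonlinear} with $k(s)$ replaced by $k_1(s)+k_2(s)+1$, and your induction (splitting $\int_0^{t_2}$ at $t_1$ and tracking nonnegativity of $h_n$ alongside monotonicity) is exactly that mechanism. Nothing further is needed.
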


For any $\lambda\ne 0$, define
\begin{align}\label{E:Ht}
  H_\lambda(t) \coloneqq \sum_{n=0}^{\infty} \lambda^{2n} h_n(t).
\end{align}

\begin{lemma}\label{L:LogH}
  Suppose that Dalang's condition~\eqref{E:Dalang} holds. For all $\lambda\ne
  0$, it holds that
  \begin{align}\label{E:LogH}
    \limsup_{t\to\infty} \frac{1}{t}\log H_\lambda(t)
    \le \gamma_0(\lambda) \coloneqq \inf \left\{\gamma: \: \lambda^2 \Theta_\gamma <1\right\}
     <  \infty,
  \end{align}
  where
  \begin{align}\label{E:Theta_gamma}
    \Theta_\gamma \coloneqq
      \frac{\rho}{(2\pi)^{d/2}} \frac{1}{\gamma}
      + \frac{1}{(2\pi)^{d/2}}\sum_{k\in\mathbb{Z}^d_*}\frac{1}{|k|^{2\alpha} \left(|k|^2+\gamma\right)}
      + C_{d,\alpha} \gamma ^{-(\alpha+1- d/2)} + \gamma^{-1} < \infty.
  \end{align}
  Moreover, when $\lambda$ is large enough,
  \begin{align}\label{E:LogH-gamma}
    \gamma_0(\lambda) \lesssim \lambda^{\max\left(\frac{4}{2(1+\alpha) -d},\,2\right)}.
  \end{align}
\end{lemma}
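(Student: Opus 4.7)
The plan is to pass to Laplace transforms in time. Set $k(s) := k_1(s;\alpha,\rho) + k_2(s;\alpha) + 1$, so that the recursion~\eqref{E:hn} reads $h_{n+1} = h_n * k$ with $h_0 \equiv 1$. Summing~\eqref{E:k1-Laplace}, \eqref{E:k2-Laplace}, and the elementary identity $\int_0^\infty e^{-\gamma s}\ud s = 1/\gamma$ shows that the Laplace transform $\int_0^\infty e^{-\gamma s} k(s)\,\ud s$ is exactly the quantity $\Theta_\gamma$ defined in~\eqref{E:Theta_gamma}; under Dalang's condition~\eqref{E:Dalang} each of the four contributions is finite, and $\Theta_\gamma$ is manifestly strictly decreasing in $\gamma$ with $\Theta_\gamma \to 0$ as $\gamma\to\infty$. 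By the convolution theorem, $\int_0^\infty e^{-\gamma t} h_n(t)\,\ud t = \gamma^{-1}\Theta_\gamma^n$, so Tonelli yields
\begin{align*}
    \int_0^\infty e^{-\gamma t} H_\lambda(t)\,\ud t
    = \sum_{n=0}^\infty \lambda^{2n}\gamma^{-1}\Theta_\gamma^n
    = \frac{1}{\gamma\left(1-\lambda^2\Theta_\gamma\right)} < \infty
\end{align*}
for every $\gamma$ with $\lambda^2 \Theta_\gamma < 1$. Since $H_\lambda$ is nondecreasing on $\R_+$ by Lemma~\ref{L:h-inc}, finiteness of this Laplace transform already forces $H_\lambda(t)<\infty$ pointwise, and the standard monotone-Tauberian bound
\begin{align*}
    \int_0^\infty e^{-\gamma t} H_\lambda(t)\,\ud t \ge H_\lambda(t)\int_t^{2t} e^{-\gamma s}\ud s = H_\lambda(t)\,\gamma^{-1}\left(e^{-\gamma t}-e^{-2\gamma t}\right)
\end{align*}
yields $\limsup_{t\to\infty} t^{-1}\log H_\lambda(t) \le \gamma$ after rearrangement, taking logs, and dividing by $t$. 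Infimizing over admissible $\gamma$ proves the first assertion~\eqref{E:LogH}.

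For the quantitative claim~\eqref{E:LogH-gamma}, the plan is to estimate $\Theta_\gamma$ as $\gamma\to\infty$ and then invert to locate $\gamma_0(\lambda)$. Two of the four terms in~\eqref{E:Theta_gamma} decay like $\gamma^{-1}$, one is explicitly $C_{d,\alpha}\gamma^{-(\alpha+1-d/2)}$, and the remaining lattice sum is handled by splitting at $|k|\sim \sqrt{\gamma}$:
\begin{align*}
    \sum_{k\in\mathbb{Z}^d_*} \frac{1}{|k|^{2\alpha}(|k|^2+\gamma)}
    \lesssim \frac{1}{\gamma}\sum_{0<|k|\le\sqrt{\gamma}}\frac{1}{|k|^{2\alpha}}
    + \sum_{|k|>\sqrt{\gamma}}\frac{1}{|k|^{2\alpha+2}}.
\end{align*}
Dyadic/shell estimates give $\sum_{|k|\le R}|k|^{-2\alpha}\asymp R^{d-2\alpha}$ (using $2\alpha<d$) and $\sum_{|k|>R}|k|^{-2\alpha-2}\asymp R^{d-2\alpha-2}$ (using Dalang's condition $2\alpha+2>d$); with $R=\sqrt{\gamma}$ both pieces are of order $\gamma^{-(\alpha+1-d/2)}$. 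Since the standing assumption $\alpha<d/2$ forces $\alpha+1-d/2<1$, this term dominates $\gamma^{-1}$ at infinity, so $\Theta_\gamma\lesssim \gamma^{-(\alpha+1-d/2)}$. Solving $\lambda^2\Theta_\gamma=1$ then gives $\gamma_0(\lambda)\lesssim \lambda^{2/(\alpha+1-d/2)} = \lambda^{4/(2(1+\alpha)-d)}$ for all sufficiently large $|\lambda|$, and since $2(1+\alpha)-d<2$ in this regime, this is indeed the binding argument of the maximum in~\eqref{E:LogH-gamma}; the ``$2$'' option would govern in the complementary regime $\alpha\ge d/2$, where the $\gamma^{-1}$ terms dominate.

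The main technical obstacle is producing the sharp exponent $\gamma^{-(\alpha+1-d/2)}$ for the lattice sum with constants uniform in $\gamma$: the low-frequency part benefits from $2\alpha<d$, while the tail benefits from the complementary Dalang bound $2(\alpha+1)>d$, and the two estimates must be matched at the split $|k|\sim\sqrt{\gamma}$. Once this comparison is in hand, the rest of the argument is purely formal Laplace-transform manipulation based on monotonicity of $H_\lambda$.
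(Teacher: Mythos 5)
Your proposal is correct, and its first half follows essentially the paper's own route: like the paper, you pass to the Laplace transform of $H_\lambda$ via the renewal structure $h_{n+1}=h_n*(k_1+k_2+1)$ and read off $\Theta_\gamma$ from \eqref{E:k1-Laplace} and \eqref{E:k2-Laplace}; you additionally spell out the monotone Tauberian step (finiteness of the Laplace transform plus the monotonicity from Lemma~\ref{L:h-inc} giving $\limsup_{t\to\infty}t^{-1}\log H_\lambda(t)\le\gamma$ for each admissible $\gamma$), which the paper leaves to the references cited after the lemma statement, so this is a completion rather than a deviation. The second half is where you genuinely diverge: the paper bounds $k_1(s)\le C(1+s^{-\beta})$ via \eqref{E:k1-Ubd} for $\beta\in\left(\max(d/2-\alpha,0),1\right)$, computes $\Theta_\gamma^\dagger\asymp\gamma^{-(1-\beta)}$, deduces $\gamma_0\lesssim\lambda^{2/(1-\beta)}$, and then replaces $\beta$ by the endpoint value; you instead estimate the lattice sum in \eqref{E:Theta_gamma} directly by splitting at $|k|\sim\sqrt{\gamma}$, obtaining the decay $\Theta_\gamma\lesssim\gamma^{-(\alpha+1-d/2)}$ in one stroke. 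Your route is arguably cleaner on this point: the paper's endpoint substitution is delicate because the constant $C_{\alpha,\beta,d}$ in \eqref{E:k1-Ubd} blows up as $\beta\downarrow d/2-\alpha$, so taken literally it only produces exponents arbitrarily close to $4/(2(1+\alpha)-d)$, whereas your shell-splitting argument hits the stated exponent exactly, at the price of redoing the counting estimate that the $k_1^\dagger$ device was meant to package. One marginal caveat: in your parenthetical remark on the regime $\alpha\ge d/2$, the borderline case $\alpha=d/2$ produces a logarithmic factor in the low-frequency block, so the ``$2$'' branch there holds only up to a logarithmic correction in $\lambda$; since the paper's standing assumption elsewhere is $\alpha<d/2$, this does not affect the substance of what you proved.
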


One may check Lemma~2.5 of~\cite{chen.kim:19:nonlinear} or Lemma~A.1
of~\cite{balan.chen:18:parabolic} (see also ~\cite{foondun.khoshnevisan:13:on})
for similar accounts.

\begin{proof}
  For any $\gamma>0$,
  \begin{align}\label{E:H-Laplace}
    \int_0^\infty e^{-\gamma t} H_{\lambda}(t) \ud t
    = & \sum_{n=0}^{\infty} \lambda^{2n} \left[\int_0^\infty e^{-\gamma t} \left(k_1(s)+k_2(s) +1\right) \ud t\right]^n
    =   \sum_{n=0}^{\infty} \lambda^{2n} \Theta_\gamma^n,
  \end{align}
  where from~\eqref{E:k1-Laplace} and~\eqref{E:k2-Laplace}, we obtain the
  expression of $\Theta_\gamma$ in~\eqref{E:Theta_gamma}. Thanks to Dalang's
  condition~\eqref{E:Dalang}, $\Theta_\gamma$ is finite. Because
  $\Theta_\gamma\downarrow 0$ as $\gamma\uparrow \infty$, we see that
  $\gamma_0(\lambda)$ in ~\eqref{E:LogH} is well-defined and is finite.

  It is not easy to compute $\gamma_0(\lambda)$ since the dependence on $\gamma$
  in $\Theta_\gamma$ is implicit. Instead, we can obtain an estimate of
  $\gamma_0$ using~\eqref{E:k1-Ubd}. Let $k_1'(s)$ be the upper bound of
  $k_1(s)$ given in\eqref{E:k1-Ubd}, namely,
  \begin{align*}
    k_1^\dagger(s) = C_{\rho,\alpha,\beta,d} \left(1+s^{-\beta}\right),
    \quad \text{for}\quad \beta\in \left(\max\left(-\alpha + d/2,0\right),1\right).
  \end{align*}
  Accordingly, we define $h_n^\dagger(t)$, $H^\dagger_\lambda(t)$,
  $\Theta_\gamma^\dagger$, and $\gamma_0^\dagger(\lambda)$. It is clear that
  \begin{align*}
    H_\lambda(t) \le H_\lambda^\dagger(t) \quad \text{for all $t\ge 0$} \quad \text{and}\quad
    \gamma_0(\lambda) \le \gamma_0^\dagger(\lambda).
  \end{align*}
  Now, $\Theta_\gamma^\dagger$ has a more explicit expression:
  \begin{align*}
    \Theta_\gamma^\dagger
    & =
      \frac{\rho}{(2\pi)^{d/2}} \frac{1}{\gamma}
      + C_{\rho,\alpha,\beta,d} \left(\gamma^{-1}+\Gamma\left(1-\beta\right)\gamma^{-1 + \beta}\right)
      + C_{d,\alpha} \gamma ^{-(\alpha+1- d/2)} + \gamma^{-1} \\
    & = C_{\rho,\alpha,\beta,d}' \left(\frac{1}{\gamma} + \frac{1}{\gamma^{\alpha+1-d/2}} + \frac{1}{\gamma^{1-\beta}}\right)
      \asymp \frac{1}{\gamma^{1-\beta}}, \quad  \text{as $\gamma\to\infty$,}
  \end{align*}
  where the asymptotic form is due to the fact that
  \begin{align*}
    \max\left(-\alpha + \frac{d}{2},0\right)  < \beta < 1 \quad \Longleftrightarrow  \quad
    0 < 1 - \beta < \min\left( \alpha + 1 -\frac{d}{2},1\right).
  \end{align*}
  Therefore, when $\lambda$ is large enough we have
  \begin{align*}
    \gamma_0^\dagger \left(\lambda\right) = \inf \left\{\gamma:\, \lambda^2 \Theta_\gamma^\dagger < 1\right\}  \lesssim \lambda^{\frac{2}{1-\beta}}.
  \end{align*}
  Finally, replacing $\beta$ by $\max\left(-\alpha+d/2,0\right)$ in the above
  upper bound completes the proof of Lemma~\ref{L:LogH}.
\end{proof}

\section{Resolvent kernel function \texorpdfstring{$\mathcal{K}_\lambda$}{}}\label{S:Kappa}

Let us first introduce some functions:

\begin{definition}\label{D:Trangle}
  For $h, w: \R_+ \times \mathbb{T}^{4d} \to \R$, define the space-time
  convolution operator ``$\triangleright$'' by
  \begin{align*}
    \left(h \triangleright w\right)(t,x_0,x,x_0',x')
    \coloneqq \int_{0}^{t}\ud s \iint_{\mathbb{T}^{2d}} \ud z\ud z' h\left(t-s,z,x,z',x'\right) w\left(s,x_0,z,x_0',z'\right)f_{\alpha,\rho}(z,z').
  \end{align*}
\end{definition}

\begin{definition}\label{D:Ln}
   Formally define the functions $\mathcal{L}_n: \R_+ \times \mathbb{T}^{4d} \to
   \R_+$ recursively by
  \begin{align}\label{E:Ln}
    \mathcal{L}_n\left(t,x_0,x,x_0',x'\right) \coloneqq
    \begin{dcases}
      G\left(t,x_0,x\right)G\left(t,x_0',x'\right)                                            & \text{if $n = 0$,} \\
      \left(\mathcal{L}_0\triangleright \mathcal{L}_{n-1}\right) \left(t,x_0,x,x_0',x'\right) & \text{otherwise,}
    \end{dcases}
  \end{align}
  and for $\lambda\ne 0$, define the resolvent $\mathcal{K}: \R_+ \times
  \mathbb{T}^{4d} \to \R_+$ by
  \begin{align}\label{E:K}
    \mathcal{K}_\lambda\left(t,x_0,x,x_0',x'\right) \coloneqq
    \sum_{n=0}^{\infty} \lambda^{2n} \mathcal{L}_n\left(t,x_0,x,x_0',x'\right).
  \end{align}
\end{definition}

The aim of this section is to prove the following proposition, which shows the
well-posedness of $\mathcal{L}_n$ and $\mathcal{K}_\lambda$ and provides some
estimates at the same time:

\begin{proposition}\label{P:LK}
  There exists a constant $C_{\alpha,\rho,d}>0$ such that for all $t>0$,
  $x,x_0,x',x_0'\in \mathbb{T}^d$, $\lambda\ne 0$, and $n\ge 1$, it holds that
  \begin{align}\label{E:Ln-h}
    \mathcal{L}_n(t,x_0,x,x_0',x')
    \leq C_{\alpha,\rho,d}^n\, G(t,x_0,x)G(t,x_0',x')  h_n(t),
  \end{align}
  and, by denoting $C_*\coloneqq \lambda\, C_{\alpha,\rho,d}^{1/2}\,$,
  \begin{align}\label{E:KH}
    \mathcal{K}_\lambda\left(t,x_0,x,x_0',x'\right)
    \le G(t,x_0,x)G(t,x_0',x') H_{C_*}(t) <\infty.
  \end{align}
\end{proposition}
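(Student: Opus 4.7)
The plan is to prove \eqref{E:Ln-h} by induction on $n$ and then obtain \eqref{E:KH} by summing. The base case $n=0$ is immediate, since $\mathcal{L}_0(t,x_0,x,x_0',x') = G(t,x_0,x) G(t,x_0',x')$ and $h_0 \equiv 1$. For the inductive step, substituting the hypothesis $\mathcal{L}_{n-1}(s,x_0,z,x_0',z') \leq C_{\alpha,\rho,d}^{n-1} G(s,x_0,z) G(s,x_0',z') h_{n-1}(s)$ into $\mathcal{L}_n = \mathcal{L}_0 \triangleright \mathcal{L}_{n-1}$ produces an integral over $(s,z,z') \in (0,t) \times \mathbb{T}^{2d}$ of the product of the four heat kernels $G(t-s,z,x)$, $G(s,x_0,z)$, $G(t-s,z',x')$, $G(s,x_0',z')$ against $h_{n-1}(s) f_{\alpha,\rho}(z,z')$. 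The Brownian-bridge identity $G(s,x_0,z) G(t-s,z,x) = G(t,x_0,x) G_{t,x_0,x}(s,z)$ from \eqref{E:BMBridge-T}, applied in both the primed and unprimed pairs of variables, factors out $G(t,x_0,x) G(t,x_0',x')$, reducing the task to estimating
\begin{equation*}
\mathcal{I}(s,t) \coloneqq \iint_{\mathbb{T}^{2d}} G_{t,x_0,x}(s,z) G_{t,x_0',x'}(s,z') f_{\alpha,\rho}(z,z') \ud z \ud z'.
\end{equation*}

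The crux of the argument is a pointwise bound $\mathcal{I}(s,t) \leq C_0 \bigl[k_1(s\wedge(t-s)) + k_2(s\wedge(t-s)) + 1\bigr]$ for a universal constant $C_0$. By the time-reversal symmetry $G_{t,x_0,x}(s,\cdot) = G_{t,x,x_0}(t-s,\cdot)$, one may restrict to $s \leq t/2$. In this range I would combine two complementary estimates. When $t$ is bounded away from zero, Lemma~\ref{L:Large} provides $G_{t,x_0,x}(s,z) \leq C\, G(s,x_0,z)$, and the resulting double integral, after expansion via the eigenfunction decomposition \eqref{E:HeatSpec} and the Fourier coefficients \eqref{E:theta} of $f_{\alpha,\rho}$, collapses to a single Fourier series whose modulus is $\leq C\, k_1(s)$. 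When $t$ is small, Lemma~\ref{L:BMBridge} dominates the torus bridge by a finite sum of Euclidean Brownian-bridge densities; combining this with the Riesz-type estimate \eqref{E:fEst} on $f_{\alpha,\rho}$ (after shifting $z-z'$ by suitable elements of $\Pi^d$ so that the Riesz kernel on $\mathbb{R}^d$ can be used in place of $f_{\alpha,\rho}$) and Plancherel on $\mathbb{R}^d$ as in \eqref{E:Riesz}--\eqref{E:k2}, this yields $\leq C\, k_2(s)$. The constant piece $\rho/(2\pi)^d$ of $f_{\alpha,\rho}$ contributes a bounded term absorbed into the $+1$.

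With this pointwise estimate, one splits $\int_0^t h_{n-1}(s) \mathcal{I}(s,t) \ud s$ at $t/2$ and changes variables $u = t-s$ in the upper half. Since $h_{n-1}$ is nondecreasing by Lemma~\ref{L:h-inc}, $h_{n-1}(s) \leq h_{n-1}(t-s)$ for $s \leq t/2$, so the whole integral is bounded by $2 C_0 \int_0^t h_{n-1}(t-u) \bigl(k_1(u) + k_2(u) + 1\bigr) \ud u = 2 C_0\, h_n(t)$ by the definition \eqref{E:hn}. This closes the induction upon choosing $C_{\alpha,\rho,d} = 2 C_0$. Summing \eqref{E:Ln-h} against the weights $\lambda^{2n}$ and setting $C_* = \lambda \sqrt{C_{\alpha,\rho,d}}$ then yields \eqref{E:KH}, the finiteness of $H_{C_*}(t)$ being guaranteed by Lemma~\ref{L:LogH}.

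The main obstacle is precisely the dichotomy estimate for $\mathcal{I}(s,t)$: the bridge on the torus must simultaneously be compared to the torus Brownian motion (to extract the $k_1$-part via Fourier analysis on $\mathbb{T}^d$) and to a sum of Euclidean Brownian bridges (to extract the $k_2$-part via the Riesz kernel on $\mathbb{R}^d$). Reconciling these two regimes into a single clean bound whose structure matches the recursive definition \eqref{E:hn} of $h_n$, while carefully managing the passage between the torus fundamental domain and $\mathbb{R}^d$, is the principal technical hurdle.
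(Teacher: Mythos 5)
Your proposal follows essentially the same route as the paper: the bridge factorization via~\eqref{E:BMBridge-T}, the large/small time dichotomy handled by Lemma~\ref{L:Large} (torus Fourier series, giving $k_1$) and Lemma~\ref{L:BMBridge} together with the Riesz bound~\eqref{E:fEst} and Plancherel on $\R^d$ (giving $k_2$), the reduction to $s\le t/2$ by bridge symmetry combined with the monotonicity of $h_{n-1}$ from Lemma~\ref{L:h-inc}, and the final geometric summation yielding~\eqref{E:KH}. The induction bookkeeping and the choice $C_{\alpha,\rho,d}=2C_0$ are exactly the paper's Steps 3--5 in compressed form.

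There is, however, one step that does not go through as written. In the large-time regime you insert the comparison $G_{t,x_0,x}(s,z)\le C_\epsilon^d\,G(s,x_0,z)$ directly into the integral against $f_{\alpha,\rho}(z,z')$ and then Fourier-expand. Since $f_{\alpha,\rho}$ takes negative values (Lemma~\ref{L:Cov}, part (ii)), replacing the bridge density by its upper bound inside an integral against a signed kernel is not a legitimate upper bound; and once you take absolute values you face $\lvert f_{\alpha,\rho}\rvert$, whose Fourier coefficients you do not know. The paper resolves this by dominating $\lvert f_{\alpha,\rho}\rvert\le f_{\alpha,\widehat{\rho}}$ with $\widehat{\rho}=\rho\vee\rho_*$, where $\rho_*$ is chosen through Lemma~\ref{L:Cov} (part (iv)) so that $f_{\alpha,\widehat{\rho}}\ge 0$; its Fourier coefficients are explicit and nonnegative, and the resulting series is $k_1(s)$ plus the constant $\widehat{\rho}-\rho$, which is precisely what the $+1$ in the kernel $k_1+k_2+1$ of~\eqref{E:hn} is designed to absorb (this, rather than the $\rho/(2\pi)^d$ piece you mention, is the origin of that term). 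The small-time regime in your sketch is fine as stated, since there you genuinely work with $\lvert f_{\alpha,\rho}\rvert$ via~\eqref{E:fEst} against nonnegative Gaussian factors. With this one correction, your argument coincides with the paper's proof.
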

\begin{proof}
  Notice that we can use the bridge density to rewrite $\mathcal{L}_1$ as
  follows:
  \begin{align}\label{E:I1-I2}
    \mathcal{L}_1(t,x_0,x,x_0',x') =
              & G(t,x_0,x)G(t,x_0',x')\left(\int_{0}^{t/2}\ud s + \int_{t/2}^{t}\ud s \right) \nonumber                        \\
              & \times\iint_{\mathbb{T}^{2d}} \ud z\ud z'\; G_{t,x_0,x}(s,z)G_{t,x_0',x'}(s,z')f_{\alpha,\rho}(z,z') \nonumber \\
    \eqqcolon & G(t,x_0,x)G(t,x_0',x') \left(I_1+I_2\right).
  \end{align}
  By the symmetry of the Brownian bridge, we only need to estimate $I_1$:
  \begin{align}\label{E:I1-only}
    \mathcal{L}_1(t,x_0,x,x_0',x') =  2 G(t,x_0,x)G(t,x_0',x') I_1.
  \end{align}

  The proof below consists of five steps. Fix an arbitrary $\epsilon>0$ and we
  will divide the proof into two cases corresponding to large time $t\ge
  \epsilon$ and small time $t <\epsilon$ in the first two steps, and the value of
  $\epsilon$ will be determined in the third step of the proof. We will use $C$
  to denote a generic constant that may change values at each appearance.
  \bigskip

  \paragraph{\textbf{Step 1.~The case for $\mathcal{L}_1$ with $t\ge
  \epsilon$.~}~} In this case, we are going to use the fact that the fundamental
  solution $G$ is bounded above and below uniformly for all $t\ge \epsilon$. To
  this end, first observe that by Lemma~\ref{L:Cov}, there exists a $\rho_*>0$
  large enough such that $f_{\alpha,\rho_*}(x,y)\ge 0$ for all
  $x,y\in\mathbb{T}^d$. Let $\rho_*$ be the smallest such $\rho_*$, i.e.,
  \begin{align*}
    \rho_* \coloneqq
    \inf\left\{\rho:\, f_{\alpha,\rho}(x,y)\ge 0,\,\,\forall x,y\in \mathbb{T}^d \right\}.
  \end{align*}
  It is ready to see that
  \begin{align}\label{E:hatRho}
      0\leq|f_{\alpha,\rho}(x,y)|\leq f_{\alpha,\widehat{\rho}}(x,y),  \quad
      \text{with $\widehat{\rho} \coloneqq \rho \vee \rho_*$.}
  \end{align}
  Since $t\ge \epsilon$, we can apply lemma~\ref{L:Large} to yield that
  \begin{align*}
    I_1 & = |I_1| = \left|\int_{0}^{t/2}\ud s \iint_{\mathbb{T}^{2d}} \ud z\ud z'\; G_{t,x_0,x}(s,z)G_{t,x_0',x'}(s,z')f_{\alpha,\rho}(z,z')\right| \\
        & \leq C_\epsilon^d \int_{0}^{t/2}\ud s \iint_{\mathbb{T}^{2d}} \ud z\ud z'\; G(s,x_0,z)G(s,x_0',z')|f_{\alpha,\rho}(z,z')|                 \\
        & \leq C_\epsilon^d \int_{0}^{t/2}\ud s \iint_{\mathbb{T}^{2d}} \ud z\ud z'\; G(s,x_0,z)G(s,x_0',z')f_{\alpha,\widehat{\rho}}(z,z')
          \eqqcolon C_\epsilon^d\, I_*,
  \end{align*}
  where we have applied~\eqref{E:hatRho} in the second inequality. We can
  evaluate $I_*$ using the Fourier series representations of $G$ and
  $f_{\alpha,\widehat{\rho}}$ to see that
  \begin{align*}
    |I_*|= & \left|\frac{1}{(2\pi)^{d/2}} \sum_{k\in \mathbb{Z}^d} \int_{0}^{t/2}\ud s\, e^{2ik\cdot (x_0-x_0')} \mathcal{F}\left(f_{\alpha,\widehat{\rho}}\right)(k) e^{-s|k|^2}\right|                                \\
    \leq   & \frac{1}{(2\pi)^{d/2}} \sum_{k\in \mathbb{Z}^d} \int_{0}^{t/2}\ud s\, \mathcal{F}\left(f_{\alpha,\widehat{\rho}}\right)(k) e^{-s|k|^2}                                                                     \\
    =      & \frac{1}{(2\pi)^{d/2}} \sum_{k\in \mathbb{Z}^d} \int_{0}^{t/2}\ud s\, \left[ \mathcal{F}\left(f_{\alpha,\rho}\right)(k)+ \mathcal{F}\left(\frac{\widehat{\rho}-\rho}{(2\pi)^d}\right)(k)\right ] e^{-s|k|^2} \\
    =      & \frac{1}{(2\pi)^{d/2}} \int_0^{t/2} \left[k_1(s) + (\widehat{\rho}-\rho)\right]\ud s.
  \end{align*}
  Therefore, by enlarging $t/2$ to $t$ in the above integral and taking into
  account of the expression $C_\epsilon$ in~\eqref{E:C-Large}, we have proved
  that when $t \ge \epsilon$,
  \begin{align}\label{E:Case-Large}
    \mathcal{L}_1(t,x_0,x,x_0',x')
    \leq C \left(1+\frac{1}{\sqrt{\epsilon}}\right)^d e^{d\pi^2/\epsilon}
         G(t,x_0,x)G(t,x_0',x') \int_0^t \left[k_1(s) + (\widehat{\rho}-\rho)\right]\ud s.
  \end{align}
  \bigskip

  \paragraph{\textbf{Step 2.~The case for $\mathcal{L}_1$ with $t<\epsilon$.~~}}
  In this case, we embed the torus in $\mathbb{R}^d$; then we apply the
  techniques of~\cite{chen.kim:19:nonlinear} to achieve the upper bound. To this
  end, recall that the Riesz kernel $f_\alpha^*$ and its Fourier transform
  $\widehat{f}_\alpha^*$ on $\R^d$ are specified in~\eqref{E:Riesz}.

  From~\eqref{E:Convention}, we can see that the integrand for $I_1$ given
  in~\eqref{E:I1-I2} are $2\pi$-periodic in each component of $z$ and $z'$.
  Hence, we can equivalently integrate over the domains: $(z,z')\in
  \mathbb{T}^d(x_0) \times \mathbb{T}^d(x_0')$, where
  \begin{align*}
    \mathbb{T}^d(x) \coloneqq [x_1-\pi,x_1+\pi] \times \cdots \times [x_d-\pi,x_d+\pi]
    \quad \text{for all $x=(x_1,\cdots,x_d)\in \mathbb{R}^d$.}
  \end{align*}
  Hence,
  \begin{align*}
    I_1 & = \int_0^{t/2}\ud s
            \int_{\mathbb{T}^d(x_0')}\ud z'
            \int_{\mathbb{T}^d(x_0)} \ud z\;
            G_{t,x_0,x}(s,z)G_{t,x_0',x'}(s,z')f_{\alpha,\rho}(z,z').
  \end{align*}
  Since both $z-x_0$ and $z'-x_0'$ in the above integral belong to
  $\mathbb{T}^d$, we can apply Lemma~\ref{L:BMBridge} to see that
  \begin{align*}
    I_1 \le C(1+t)^d \sum_{k,k'\in \Pi^d} & \int_0^{t/2}\ud s
              \int_{\mathbb{T}^d(x_0')}\ud z'
              \int_{\mathbb{T}^d(x_0)} \ud z\; \\
            & \times p_{t,x_0,x+k}(s,z)p_{t,x_0',x'+k'}(s,z')f_{\alpha,\rho}(z,z'),
  \end{align*}
  where recall that $\Pi \coloneqq \left\{-2\pi,0,2\pi\right\}$.
  By~\eqref{E:fEst}, we see that for some universal constant $C>0$,
  \begin{align*}
    f_{\alpha,\rho}\left(z,z'\right)
    =   f_{\alpha,\rho}\left(\myMod{z-z'}\right)
    \le C \sum_{k''\in \Pi^d} f_{\alpha,\rho}^*\left(z-z'+ k''\right).
  \end{align*}
  Thanks to the nonnegativity of the integrand, we can apply the above
  inequality to $I_1$ and extend the integration domain from
  $\mathbb{T}^d(x_0)\times\mathbb{T}^d(x_0')$ to $\mathbb{R}^{2d}$ to yield that
  \begin{align*}
    I_1 \le C (1+t)^d \sum_{k,k',k''\in \Pi^d}
        & \int_0^{t/2}\ud s \: (1+s)^d
          \iint_{\mathbb{R}^{2d}}\ud z' \ud z\;
          f_{\alpha,\rho}^*\left(z-z'+ k''\right)\\
        & \times
          p_{t,x_0 , x +k }\left(s,z \right)
          p_{t,x_0', x'+k'}\left(s,z'\right).
  \end{align*}
  We have reduced the problem from $\mathbb{T}^d$ to $\mathbb{R}^d$. Now we can
  apply the Plancherel theorem to the above integral to obtain
  \begin{align*}
    I_1 = |I_1|
    \le & C (1+t)^d \int_0^{t/2}\ud s \: \int_{\mathbb{R}^d}\ud \xi\: e^{-\frac{s(t-s)}{t}|\xi|^2} \widehat{f}_\alpha^*(\xi) \\
    \le & C (1+t)^d \int_0^{t/2}\ud s \: \int_{\mathbb{R}^d}\ud \xi\: e^{-\frac{s}{2}|\xi|^2} \widehat{f}_\alpha^*(\xi)      \\
     =  & C (1+t)^d \int_0^{t/2}\ud s \: k_2(s),
  \end{align*}
  where the second inequality is due to the fact that $s/2\le s(t-s)/t$ when
  $s\in [0,t/2]$. Therefore, we have proved that when $t\in (0,\epsilon)$,
  \begin{align}\label{E:Case-Small}
    \mathcal{L}_1(t,x_0,x,x_0',x')
    \leq C \left(1+\epsilon\right)^d G(t,x_0,x)G(t,x_0',x') \int_0^t k_2(s)\ud s.
  \end{align}
  Note that we use the condition $t<\epsilon$ only in the last step to bound the
  factor $(1+t)^d$ by $(1+\epsilon)^d$.\bigskip

  \paragraph{\textbf{Step 3.~Determination of $\epsilon$ for the
  estimate of $\mathcal{L}_1$.~}~} Combining the estimates in
  both~\eqref{E:Case-Large} and~\eqref{E:Case-Small}, we see that
  \begin{align}\label{E:Case-Combined}
    \mathcal{L}_1(t,x_0,x,x_0',x')
    \leq & C  \,(C_{1,\epsilon} + C_{2,\epsilon}) G(t,x_0,x)G(t,x_0',x') \int_0^t \left[k_1(s)+k_2(s)+1\right]\ud s,
  \end{align}
  for all $t>0$, where
  \begin{align*}
    C_{1,\epsilon} \coloneqq \left(1+\frac{1}{\sqrt{\epsilon}}\right)^d e^{d\pi^2/\epsilon} \quad \text{and} \quad
    C_{2,\epsilon} \coloneqq \left(1+\epsilon\right)^d.
  \end{align*}
  It is clear that both $C_{i,\epsilon}$ are continuous functions of
  $\epsilon>0$. Since $C_{1,\epsilon}$ is monotone decreasing while
  $C_{2,\epsilon}$ is monotone increasing, there exists an unique $\epsilon_0>0$
  that minimize the addition of the two, namely, $\epsilon_0\coloneqq \arg\min
  \left(C_{1,\epsilon}+ C_{2,\epsilon}\right).$ Finally, one can simply choose
  the constant $C_{\alpha,\rho,d}$ in~\eqref{E:Ln-h} to be $C_{1,\epsilon_0}+
  C_{2,\epsilon_0}$ (up to another factor of generic constant). This completes
  the proof of~\eqref{E:Ln-h} in case of $n=1$. \bigskip

  \paragraph{\textbf{Step 4.~Upper bounds for $\mathcal{L}_n$.~}~}
  Suppose now~\eqref{E:Ln} holds for $n-1$. By Definition~\ref{D:Ln} and the
  induction assumption, we have
  \begin{align}\label{Ln bound 1}
    \mathcal{L}_n= & \int_0^{t} \ud s \iint_{\mathbb{T}^{2d}} \ud z \ud z' G(t-s,z,x)G(t-s,z',x')\mathcal{L}_{n-1}(s,x_0,z,x_0',z')f_{\alpha,\rho}(z,z')\nonumber \\
    \leq           & C^{n-1} G(t,x_0,x)G(t,x_0',x')\nonumber                                                                                                      \\
                   & \times \int_0^t \ud s\, h_{n-1}(s) \iint_{\mathbb{T}^{2d}} \ud z \ud z' G_{t,x_0,x}(s,z)G_{t,x_0',x'}(s,z')f_{\alpha,\rho}(z,z').
  \end{align}
  Using the fact that $h_n(t)$ is nondecreasing (see Lemma~\ref{L:h-inc}) and by
  the symmetry of Brownian bridge, we see that
  \begin{align*}
    \mathcal{L}_n\left(t,x_0,x,x_0',x'\right)
    \le 2 C^{n-1} G(t,x_0,x)G(t,x_0',x') I_n(t),
  \end{align*}
  where
  \begin{align*}
    I_n(t) \coloneqq
    \int_0^{t/2}\ud s\, h_{n-1}(t-s)
    \iint_{\mathbb{T}^{2d}} \ud z \ud z'\, G_{t,x,x_0}(s,z)G_{t,x',x_0'}(s,z')f_{\alpha,\rho}(z,z').
  \end{align*}
  Now, one can carry out the same three steps as the proof of
  Proposition~\ref{P:LK} to bound $I_n(t)$:
  \begin{align*}
    I_n(t) \leq & C \int_0^{t/2} h_{n-1}(t-s)\left(k_1(s)+k_2(s)+1\right) \ud s \\
           \leq & C \int_0^t     h_{n-1}(t-s)\left(k_1(s)+k_2(s)+1\right) \ud s
           =      C h_n(t).
  \end{align*}
  This completes the proof of~\eqref{E:Ln-h}.\bigskip

  \paragraph{\textbf{Step 5.~Upper bound for the resolvent
  \texorpdfstring{$\mathcal{K}$}{}}~} Finally, the estimate of $\mathcal{K}$
  in~\eqref{E:KH} is a direct consequence of the estimate of $\mathcal{L}_n$
  in~\eqref{E:Ln-h} and the definition of $H_\lambda(t)$ in~\eqref{E:Ht}. This
  completes the whole proof of Proposition~\ref{P:LK}.
\end{proof}

\section{Proof of Theorem~\ref{T:Main}} \label{S:Main T}

Now we are ready to introduce introduce the mild solution to~\eqref{E:PAM} and
establish its well-posedness. Let $\dot{W}$ be the centered and spatially
homogeneous Gaussian noise introduced above, defined on a complete probability
space $(\Omega, \mathcal{F}, \P)$.  Let $\left\{W_t(A); t \ge 0, A \in
\mathscr{B}(\dom^d)\right\}$ be the martingale measure associated to the noise
$\dot W$ in the sense of Walsh~\cite{walsh:86:introduction}, where
$\mathscr{B}\left(\dom^d\right)$ refers to the Borel $\sigma$-algebra on
$\dom^d\subseteq\R^d$. Let $\{\mathcal{F}_t\}_{t\ge 0}$ be the underlying
augmented filtration generated by $\dot{W}$
\begin{align*}
  \mathcal{F}_t = \sigma\left\{W_s(A): 0 \le s \le t, A \in \mathscr{B}(\dom) \right\}\vee \mathcal{N},
\end{align*}
where $\mathcal{N}$ is the $\sigma$-field generated by all $\mathbb{P}$-null
sets in $\mathcal{F}$.

\begin{definition}\label{D:Sol}
  A process $u = \left\{u(t, x);\, t > 0, x \in \dom^d\right\}$ is called
  \textit{a random field solution} or the \textit{mild solution}
  to~\eqref{E:PAM} if:
  \begin{enumerate}
    \item[(i)] $u$ is adapted, i.e., for each $t > 0$ and $x \in \dom^d$, $u(t,
      x)$ is $\mathcal{F}_t$-measurable;
    \item[(ii)] $u$ is jointly measurable with respect to $\mathscr{B}\left((0,
      \infty) \times \dom^d\right) \times \mathcal{F}$;
    \item[(iii)] for each $t > 0$ and $x \in \dom^d$, it holds that
      \begin{align}\label{D:Sol-L2}
        \E \left(\int_0^t \ud s \iint_{\dom^{2d}}\ud y \, \ud y'\,
        G(t-s,x,y) u(s, y)f(y,y') G(t-s, x, y') u(s, y')\right) < \infty;
      \end{align}
    \item[(iv)] for each $t > 0$ and $x \in \dom^d$, $u$
      satisfies~\eqref{E:mild-sol} a.s. for all $(t,x)\in
      (0,\infty)\times\dom^d$.
  \end{enumerate}
\end{definition}

Now we are ready to prove Theorem~\ref{T:Main}.

\begin{proof}[Proof of Theorem~\ref{T:Main}]
  The existence and uniqueness in $L^2(\Omega)$, as well as the two-point
  correlation estimates of the solution to~\eqref{E:PAM}, can be established
  from the standard Picard iteration. More precisely, with
  Proposition~\ref{P:LK}, we can carry out the same six steps as those in the
  proof of Theorem~2.4 in Section~3.3 of~\cite{chen.dalang:15:moments}; One may
  also check the proofs of Theorems~1.4 and~1.5
  of~\cite{candil.chen.ea:23:parabolic}. \smallskip

  It remains to prove the $p$-th moment bounds in~\eqref{E:p-Moments}. One can
  follow the same strategy as in the proof of Theorem~1.7
  of~\cite{chen.huang:19:comparison} to establish~\eqref{E:p-Moments}; See the
  proof of part~(ii) of Theorem~1.4 in Section~5.1
  of~\cite{candil.chen.ea:23:parabolic} for another presentation. In essence,
  let $u_n(t,x)$ be the immature solutions in the Picard iterations, namely,
  $u_0(t,x) = J_0(t,x)$, and for $n\ge 1$,
  \begin{align*}
     u_n(t,x) = J_0(t,x) + \lambda \int_0^t \int_{\dom^d} G(t-s, x, y) u_{n-1}(s, y) W(\ud s,\ud y).
  \end{align*}
  An application of the \textit{Burkholder-Davis-Gundy inequality} (BDG
  inequality) shows that their $p$-th moments satisfy the following integral
  inequality
  \begin{align}\label{E:BDG-Picard}
    \Norm{u_{n+1}(t,x)}_p^2
    \le 2 J_0^2(t,x) + 8p\lambda^2 \int_0^t\ud s\iint_{\mathbb{T}^{2d}} \ud y\ud y' f_{\alpha,\rho}\left(y,y'\right)
            G(t-s,x,y )\Norm{u_n(s,y )}_p & \notag \\
    \times  G(t-s,x,y')\Norm{u_n(s,y')}_p & .
  \end{align}
  Then one can show that
  \begin{align*}
    g_n(t,x)\coloneqq \sqrt{2} J_0(t,x)  \left( \sum_{k=0}^{n} \left[16 p \lambda^2\right]^k h_k(t) \right)^{1/2}
  \end{align*}
  is a super solution to~\eqref{E:BDG-Picard}, namely, $\Norm{u_{n}(t,x)}_p \le
  g_n(t,x)$. Then one can show that for $(t,x)$ fixed, $\{u_n(t,x): n\ge 0\}$ is
  a Cauchy sequence in $L^p(\Omega)$. Finally, the memorization relation holds
  in the limit:
  \begin{align*}
    \Norm{u(t,x)}_p
    =   \lim_{n \to \infty} \Norm{u_n(t,x)}_p
    \le \lim_{n \to \infty} g_n(t,x)
    =  \sqrt{2} J_0(t,x) \left( \sum_{k=0}^{\infty} \left[16 p \lambda^2\right]^k h_k(t) \right)^{1/2},
  \end{align*}
  where $h_k(t)$ are given in~\eqref{E:hn}. This completes the sketch proof
  of~\eqref{E:p-Moments}. We refer the interested readers to the proof of
  Theorem~1.7 of~\cite{chen.huang:19:comparison} or the proof of part~(ii) of
  Theorem~1.4 in Section~5.1 of~\cite{candil.chen.ea:23:parabolic} for more
  details.

  Finally, when $\lambda^2 p$ is large enough, one can
  obtain~\eqref{E:p-Mom-Rate} from the estimate of $\gamma_0(\lambda)$
  in~\eqref{E:LogH-gamma}. This completes the whole proof of
  Theorem~\ref{T:Main}.
\end{proof}

\section{Lower bound for the second moment}\label{S:Lbd}

\begin{proof}[Proof of Theorem~\ref{T:Lbd}]
  Fix an arbitrary $\epsilon > 0$. Under condition~\eqref{E:Cf}, thanks to
  Lemma~\ref{L:Large}, one can derive the corresponding lower bound in Step~1 of
  the proof of Proposition~\ref{P:LK}. In particular, when $t\ge \epsilon$,
  \begin{align*}
    \mathcal{L}_1 (t,x_0,x,x_0',x')
    & \ge c_\epsilon^d \int_{0}^{t/2}\ud s \iint_{\mathbb{T}^{2d}} \ud z\ud z'\; G(s,x_0,z)G(s,x_0',z')f(z,z') \\
    & \ge c_\epsilon^d C_f \int_{0}^{t/2}\ud s \iint_{\mathbb{T}^{2d}} \ud z\ud z'\; G(s,x_0,z)G(s,x_0',z')    \\
    & = c_\epsilon^d C_f \frac{t}{2},
  \end{align*}
  where the constant $c_\epsilon$ is given in~\eqref{E:C-Large}. A lower bound
  of $\mathcal{L}_2$ can be derived similarly:
  \begin{align*}
    & \mathcal{L}_2\left(t,x_0,x,x_0',x'\right)                                                                                      \\
    & =\int_0^{t} \ud s \iint_{\mathbb{T}^{2d}} \ud z \ud z' G(s,z,x)G(s,z',x')\mathcal{L}_1(t-s,x_0,z,x_0',z')f(z,z')               \\
    & \geq \int_0^{\frac{t}{2}} \ud s \iint_{\mathbb{T}^{2d}} \ud z \ud z' G(s,z,x)G(s,z',x')\mathcal{L}_1(t-s,x_0,z,x_0',z')f(z,z') \\
    & \geq \frac{c_\epsilon^dC_f}{2}\int_0^{\frac{t}{2}} \ud s\, (t-s)\iint_{\mathbb{T}^{2d}} \ud z \ud z' G(s,z,x)G(s,z',x')\; C_f  \\
    & =  \frac{c_\epsilon^dC_f^2}{2}\frac{1}{2}\left(t^2-\left(\frac{t}{2}\right)^2\right)
      \geq \frac{c_\epsilon^dC_f^2}{2}\frac{1}{2}\frac{t^2}{2}.
  \end{align*}
  Finally, an induction argument together with the elementary relation
  $t^n-({t/2})^n\geq t^n/2$ gives us that, uniformly for all $x_0,x,x_0',x'\in
  \mathbb{T}^d$,
  \begin{align*}
    \mathcal{L}_n\left(t,x_0,x,x_0',x'\right)
    \geq \frac{c_\epsilon^d}{2}\frac{1}{n!}\left(\frac{C_ft}{2}\right)^n,
  \end{align*}
  and hence, for any $\lambda\ne 0$,
  \begin{align*}
    \mathcal{K}_\lambda\left(t,x_0,x,x_0',x'\right)
    \ge \frac{c_\epsilon^d}{2}\exp\left(\frac{C_f t}{2}\right).
  \end{align*}
  Then an application of~\eqref{E:TwoPoint} proves Theorem~\ref{T:Lbd}.
\end{proof}

When the initial condition is given by a bounded measurable function, one has
the Feynman-Kac representation for second moment of the solution to the
parabolic Anderson model (cf.~\cite{hu.nualart.ea:11:feynman-kac}). Taking
advantage of this Feynman-Kac representation, we are able to prove the
exponential lower bound of the second moment for all $\rho>0$.

\begin{proof}[Proof of Theorem~\ref{T:Lbd F-K}]
   When the initial condition is given by a bounded measurable function $f$, one
   has the Feynman-Kac formula for the second moment in the following form,
  \begin{align}\label{E:FeynmanKac}
    \E\left[u(t,x)^2\right] = \E_x\left[f(B_s)f(\tilde{B}_s)\exp\left\{\int_0^tf_{\alpha,\rho}(B_s, \widetilde{B}_s) \ud s \right\}\right],
  \end{align}
  where $B_s$ and $\widetilde{B}_s$ are two independent Brownian motions on
  torus starting from $x$, and $\E_x$ is taking expectation of the randomness of
  the two Brownian motions. Since we assumed that $f$ is also bounded below away
  from zero, it is clear
  \begin{align*}
    \mathbb{E}\left[u(t,x)^2\right] \gtrsim \E_x\left[\exp\left\{\int_0^tf_{\alpha,\rho}(B_s, \widetilde{B}_s) \ud s \right\}\right].
  \end{align*}
  An application of Jensen's inequality give
 \begin{align}\label{E:FeynmanKac2}
    \mathbb{E}\left[u(t,x)^2\right]
    \gtrsim \left[\exp\left\{\int_0^t\E_x \left(f_{\alpha,\rho}\left(B_s, \widetilde{B}_s\right) \right)\ud s \right\}\right].
  \end{align}
  Now recall the definition of $f_{\alpha,\rho}$ in~\eqref{E:f_alpha}, the
  spectral representation of the heat kernel $G(t,x,y)$ in~\eqref{E:HeatSpec},
  together with the fact that
  \begin{align*}
      \E_x\left((2\pi)^{-d/2}e^{ik\cdot B_s}\right)
    = \E_x\left((2\pi)^{-d/2}e^{-ik\cdot \tilde{B}_s}\right)
    = e^{-\frac{|k|^2}{2}s},
  \end{align*}
  the exponent on the right hand-side of~\eqref{E:FeynmanKac2} equals
  \begin{align*}
    \left[\int_0^t \E_x \left(f_{\alpha,\rho}\left(B_s,\tilde{B}_s\right)\right) \ud s\right]
     =    & \frac{\rho\, t}{(2\pi)^d} + \frac{2^\alpha}{(2\pi)^d}\sum_{k\in \mathbb{Z}_*^d} |k|^{-2\alpha} \int_0^t e^{-{|k|^2}s}\ud s   \\
     =    & {\frac{\rho\, t}{(2\pi)^d} + \frac{2^\alpha}{(2\pi)^d}\sum_{k\in \mathbb{Z}_*^d}} |k|^{-2\alpha-2}\left[1-e^{-|k|^2t}\right] \\
     \sim & \frac{\rho\, t}{(2\pi)^d},\quad \mathrm{as}\ \  t\uparrow \infty.
  \end{align*}
 The proof is thus completed.
\end{proof}

The exponential lower bound suggests full intermittency for the solution. As
explained in the introduction, the exponential growth of the second moment is
due to ergodicity of Brownian motion on a compact manifold: the time average
should converge to the space average
\begin{align*}
  \frac{1}{t}\int_0^t f_{\alpha,\rho}(B_s-\widetilde{B}_s)\ud s
  \to \int_{\mathbb{T}^d} f_{\alpha,\rho} (x) \ud x = \frac{\rho}{(2\pi)^d},
\end{align*}
in $\limsup$ at a rate of $\sqrt{\log\log t}$ (see~\cite{brosamler:83:laws}). It
is clear that our argument above is quite generic and does not depend on the
fact that the state space considered here is a torus (as opposed to a general
compact manifold).

It is well known that there is a phase transitions in the intensity parameter
$\lambda$ (in terms of the growth rate of the second moment) in $\mathbb{R}^d$
for $d\geq 3$ (see, e.g., \cite{chen.kim:19:nonlinear}); a result of the fact
that Brownian motion is transient when $d\geq3$. We do not see this transition
on torus as long as $\rho>0$ and regardless of the value of $\alpha$: the second
moment also blows up exponentially in time. However, it is not clear what
happens when $\rho=0$.

\section{H{\"o}lder Continuity}\label{S:Holder}

We first establish the following lemma, the proof of which follows similar idea
as that of Lemma~3.1 of~\cite{chen.huang:19:comparison}.

\begin{lemma}\label{L:Holder}
  There exists some universal constant $C>0$ such that for all $\beta \in
  \left(0,1\right]$, $x,y\in\mathbb{T}^d$, and $t'\geq t > 0$,
  \begin{align}\label{eq:Holder-T}
    \left|G(t,x)-G(t',x)\right| & \leq Ct^{-\beta/2}\, G(2t',x) \left(t'-t\right)^{\beta/2} \quad \text{and} \\
    \left|G(t,x)-G(t, y)\right| & \leq Ct^{-\beta/2}\, \left[ G(2t,x) + G(2t,y)  \right] \myDist{x,y}^\beta,
    \label{eq:Holder-S}
  \end{align}
  where we recall that $\myDist{x,y} = |\myMod{x-y}|$.
\end{lemma}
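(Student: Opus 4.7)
\emph{Proof proposal for Lemma~\ref{L:Holder}.} My plan is to reduce both estimates to analogous Hölder bounds for the Gaussian heat kernel $p_d$ on $\mathbb{R}^d$ and then sum over translates via the periodization identity $G(t,x) = \sum_{k \in \mathbb{Z}^d} p_d(t, x + 2\pi k)$ from \eqref{E:HeatKernel}, following the same blueprint as Lemma~3.1 of~\cite{chen.huang:19:comparison}, which treats the corresponding statement on $\mathbb{R}^d$. The key Euclidean estimates to establish are, for all $y, y_1, y_2 \in \mathbb{R}^d$, $t' \geq t > 0$, and $\beta \in (0,1]$,
\begin{gather*}
  |p_d(t, y) - p_d(t', y)| \leq C\, t^{-\beta/2} (t'-t)^{\beta/2}\bigl[p_d(2t, y) + p_d(2t', y)\bigr],\\
  |p_d(t, y_1) - p_d(t, y_2)| \leq C\, t^{-\beta/2} |y_1 - y_2|^\beta\bigl[p_d(2t, y_1) + p_d(2t, y_2)\bigr].
\end{gather*}
These follow by interpolating between the trivial bound $p_d(s, z) \leq 2^{d/2}\, p_d(2s, z)$ and the Gaussian derivative estimates $|\partial_s p_d(s, y)| \leq C s^{-1} p_d(2s, y)$ and $|\nabla_y p_d(s, y)| \leq C s^{-1/2} p_d(2s, y)$, which themselves come from the elementary fact that $u e^{-u/2}$ and $\sqrt{u}\, e^{-u/2}$ are bounded on $u \geq 0$ applied to $u = |y|^2/(2s)$.

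With these in hand, the termwise triangle inequality
$$|G(t,x) - G(t',x)| \leq \sum_{k \in \mathbb{Z}^d} |p_d(t, x + 2\pi k) - p_d(t', x + 2\pi k)|$$
and its spatial analogue reduce the torus bounds to summing the Gaussian bounds over $k$. For the temporal estimate this produces $C t^{-\beta/2}(t'-t)^{\beta/2}[G(2t, x) + G(2t', x)]$, matching \eqref{eq:Holder-T} after absorbing or comparing the two torus heat kernel evaluations via the uniform bounds in Lemma~\ref{L:G-bdd}. For the spatial estimate the subtlety is that the Euclidean separation of representatives $x, y \in [-\pi, \pi]^d$ differs from $\myDist{x, y}$ when the two points lie near antipodal positions on the torus; I would remedy this by choosing the shortest representative $y' := x + \myMod{y-x} \in \mathbb{R}^d$, so that $y' \equiv y \pmod{2\pi \mathbb{Z}^d}$ and $|x - y'| = \myDist{x,y}$, and reindexing the periodic sum $G(t, y) = \sum_k p_d(t, y' + 2\pi k)$ before applying the Euclidean spatial bound with Euclidean distance $\myDist{x,y}$.

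The main obstacle is the first step, specifically obtaining the $(t'-t)^{\beta/2}$ exponent together with a right-hand side controlled only by $p_d(2t, y) + p_d(2t', y)$. A naive pointwise control of $\sup_{s\in[t,t']} p_d(2s,y)$ by the endpoint values fails because $s \mapsto p_d(2s, y)$ is unimodal with peak at $s^* = |y|^2/(2d)$, which can exceed both endpoint values when $s^* \in (t, t')$. The remedy is to evaluate the time integral $\int_t^{t'} s^{-1} p_d(2s, y)\,\ud s$ directly via the substitution $\sigma = |y|^2/(4s)$, converting it into an incomplete gamma integral that admits a finer estimate; combining this with Hölder's inequality on the time integration at an exponent related to $\beta$ isolates the factor $t^{-\beta/2}(t'-t)^{\beta/2}$ while retaining only the endpoint Gaussian values on the right-hand side.
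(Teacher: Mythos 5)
Your route is genuinely different from the paper's: the paper works directly on the torus, applying the mean value theorem to the periodized series $G$ (in space, and a prefactor/exponential split in time) and then interpolating via $|\cdot|=|\cdot|^{\beta}|\cdot|^{1-\beta}$ together with the uniform bound \eqref{E:G-UnifBd}, whereas you first prove H\"older bounds for the Euclidean kernel $p_d$ and then sum over translates; your shortest-representative reindexing for the spatial bound is the right way to handle the wrap-around, and it delivers exactly $Ct^{-\beta/2}\myDist{x,y}^\beta\left[G(2t,x)+G(2t,y)\right]$. The genuine gap is the last step of the temporal bound: termwise summation gives $Ct^{-\beta/2}(t'-t)^{\beta/2}\left[G(2t,x)+G(2t',x)\right]$, and your proposed absorption of $G(2t,x)$ into $G(2t',x)$ ``via Lemma~\ref{L:G-bdd}'' does not work. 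Lemma~\ref{L:G-bdd} gives $G(2t,x)\le (2C_{2t})^d\,p(2t,x)$ and $G(2t',x)\ge C_{2t'}^d\,p(2t',x)$ with $p(2t,x)/p(2t',x)\le (t'/t)^{d/2}$, so the comparison is uniform only when $t'\lesssim t$; for $t\ll t'$ and $x$ near $0$ one has $G(2t,x)\asymp (4\pi t)^{-d/2}\to\infty$ while $G(2t',x)$ stays bounded, so no universal constant can absorb the first kernel. (In fact the paper's own computation also only produces $G(t,x)+G(2t',x)$ on the right-hand side; the single-kernel form of \eqref{eq:Holder-T} can only be retained by keeping both kernels, or by restricting to $t\le t'\le 2t$ where the absorption is legitimate. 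You should state and prove the two-kernel version rather than claim the absorption.)

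Concerning the Euclidean estimates themselves: both displayed inequalities are true, but your sketches leave gaps. For the temporal one, the obstacle you identify (the peak of $s\mapsto p_d(2s,y)$ lying inside $(t,t')$) only occurs where it is harmless: if $t'-t\ge t$ then $t^{-\beta/2}(t'-t)^{\beta/2}\ge 1$ and the trivial bound $|p_d(t,y)-p_d(t',y)|\le p_d(t,y)+p_d(t',y)\le 2^{d/2}\left[p_d(2t,y)+p_d(2t',y)\right]$ already suffices; if $t'-t<t$ then every $s\in[t,t']$ satisfies $p_d(2s,y)\le 2^{d/2}p_d(2t',y)$, so $\int_t^{t'}s^{-1}p_d(2s,y)\,\ud s\le 2^{d/2}\tfrac{t'-t}{t}\,p_d(2t',y)\le 2^{d/2}t^{-\beta/2}(t'-t)^{\beta/2}p_d(2t',y)$. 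The incomplete-gamma/H\"older device is therefore unnecessary, and as written (``an exponent related to $\beta$'') it is not a proof. The same dichotomy is needed, and unmentioned in your proposal, for the spatial Euclidean bound: the straight segment joining $y_1,y_2$ may pass much closer to the origin than either endpoint, so the mean-value bound with endpoint Gaussians only holds after splitting into $|y_1-y_2|\ge\sqrt t$ (trivial, since the prefactor exceeds $1$) and $|y_1-y_2|<\sqrt t$ (where $|y_\theta|\ge |y_1|-\sqrt t$ on the segment and $\sup_{u\ge 0}u\,e^{-u^2/4+u/2}<\infty$ give $|\nabla p_d(t,y_\theta)|\le Ct^{-1/2}p_d(2t,y_1)$). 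With these repairs your periodization argument does yield the (two-kernel) estimates, by a route genuinely different from the paper's direct computation on $G$.
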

\begin{proof}
  In the proof, we use $C$ to denote a generic constant which may change its
  value at each appearance. Choose and fix an arbitrary $\beta\in(0,1]$. We
  start with ~\eqref{eq:Holder-S}. From the mean value theorem, we have for some
  $\xi \coloneqq \xi(x,y) \in \mathbb{T}^d$ such that 
  \begin{align*}
    \left|G(t,x)- G(t,y)\right|
    & = |\nabla G(t,\xi)| \times {\left|\myMod{x-y}\right|}                                                      \\
    & = \left[\sum_{k\in\mathbb{Z}^d}(2\pi t)^{-d/2}\frac{|\xi+2\pi k|}{t}e^{-\frac{|\xi+2\pi k|^2}{2t}}\right] \times \myDist{x,y} \\
    & \leq \left[\frac{C}{\sqrt{t}}\sum_{k\in\mathbb{Z}^d}(4\pi t)^{-d/2}e^{-\frac{|\xi+2\pi k|^2}{4t}}\right]  \times \myDist{x,y} \\
    & =\frac{C}{\sqrt{t}}G(2t,\xi)\myDist{x,y}                                                                                      \\
    & \leq\frac{C}{\sqrt{t}} \left[G(2t,x) + G(2t,y)\right] \myDist{x,y},
  \end{align*}
  where the last inequality can be obtained via scaling arguments (see, e.g.,
  the proof of Lemma~3.1 of~\cite{chen.huang:19:comparison}).
  We can apply the above inequality and~\eqref{E:G-UnifBd} to see that
  \begin{align*}
    |G(t,x)- G(t,y)|
    & = |G(t,x)- G(t,y)|^\beta|G(t,x)- G(t,y)|^{1-\beta}                                                                                 \\
    & \leq \frac{C}{t^{\beta/2}} \left[G(2t,x) + G(2t,y)\right]^{\beta d}\myDist{x,y}^\beta \left[G(2t,x) + G(2t,y)\right]^{(1-\beta) d} \\
    & =\frac{C}{t^{\beta/2}} \left[G(2t,x) + G(2t,y)\right] \myDist{x,y}^\beta,
  \end{align*}
  which proves that~\eqref{eq:Holder-S}. To prove~\eqref{eq:Holder-T}, observe
  that
  \begin{align*}
    |G(t,x)-G(t',x)|
    & \leq \left|\sum_{k\in\mathbb{Z}^d}(2\pi t)^{-d/2}e^{-\frac{|x+2\pi k|^2}{2t}}-(2\pi t')^{-d/2}e^{-\frac{|x+2\pi k|^2}{2t}} \right. \\
    & \qquad \left.+(2\pi t')^{-d/2}e^{-\frac{|x+2\pi k|^2}{2t}} -(2\pi t')^{-d/2}e^{-\frac{|x+2\pi k|^2}{2t'}} \right|                  \\
    & \leq t^{d/2}\left|(t')^{-d/2} -t^{d/2} \right|G(t,x)
      + \left| \sum_{k\in\mathbb{Z}^d} (2\pi t')^{-d/2} \left(e^{-\frac{|x+2\pi k|^2}{2t}}-e^{-\frac{|x+2\pi k|^2}{2t'}}\right)\right|    \\
    & \eqqcolon I_1 + I_2.
  \end{align*}

  Following inequality~(3.3) in~\cite{chen.huang:19:comparison}, we find that
  $I_1 \le C t^{-\beta/2} |t'-t|^{\beta/2} G(t,x)$. As for $I_2$, from the mean
  value theorem, we can deduce that for some $\xi \in [t,t']$,
  \begin{align*}
    I_2 & =   \sum_{k\in\mathbb{Z}^d} (2\pi t')^{-d/2} \frac{|x+2\pi k|^2}{(2\xi)^2} \left|t'-t\right| e^{-\frac{|x+ 2\pi k|^2}{2 \xi}}
          \le C \sum_{k\in\mathbb{Z}^d} (2\pi t')^{-d/2} \frac{1}{2\xi} \left|t'-t\right| e^{-\frac{|x+ 2\pi k|^2}{4 \xi}} \\
        & \le C \sum_{k\in\mathbb{Z}^d} (2\pi t')^{-d/2} \frac{1}{2\xi} \left|t'-t\right| e^{-\frac{|x+ 2\pi k|^2}{4 \xi}}
      \le C \frac{|t'-t|}{t} G(2t',x).
  \end{align*}
  Together with the fact that $I_2 \le G(t',x)$, we see that
  \begin{align*}
    I_2 = I_2^{\beta/2} I_2^{1-\beta/2}
    \le C\left[ \frac{|t'-t|}{t} G(2t',x)\right]^{\beta/2} G(t',x)^{1-\beta/2}
    \le C t^{-\beta/2}|t'-t|^{\beta/2} G(2t',x).
  \end{align*}
  Combining the bounds for $I_1$ and $I_2$ proves~\eqref{eq:Holder-T}.
\end{proof}


Now we are ready to prove Theorem~\ref{T:Holder}.

\begin{proof}[Proof of Theorem~\ref{T:Holder}]
  We need only control the H\"older modulus of
  \begin{align*}
    I(t,x) \coloneqq \iint_{(0,t]\times \mathbb{T}^d}G(t-s,x,y) \lambda u(s,y) W(\ud s,\ud y).
  \end{align*}
  Fix an arbitrary $n\ge 1$. Following~\cite{chen.huang:19:comparison}, we need
  to compute the $p$-th moment increments
  \begin{align*}
    \Norm{I(t,x)-I(t',x')}_p^2 \le C \left[ I_1\left(t,x,x'\right) + I_2\left(t,t',x'\right) + I_3\left(t,t',x\right)\right],
  \end{align*}
  for $t,t'\in [1/n,n]$ and $x,x' \in \mathbb{T}^d$ with $t'>t$, where $I_1$,
  $I_2$, and $I_3$ are defined as follows:
  \begin{align*}
    I_1(t,x,x' ) \coloneqq & \int_0^t \ud s \iint_{\mathbb{T}^{2d}} \ud y_1 \ud y_2 \: f_{\alpha,\rho}(y_1,y_2) \left|G(t-s, x,y_1) -G(t-s,x',y_1)\right| \\
                           & \times \left|G(t-s,x,y_2)-G(t-s,x',y_2)\right| \Norm{u(s,y_1)}_p \Norm{u(s,y_2)}_p;                                          \\
    I_2(t,t',x') \coloneqq & \int_0^t \ud s \iint_{\mathbb{T}^{2d}} \ud y_1 \ud y_2 \: f_{\alpha,\rho}(y_1,y_2)\left|G(t-s,x',y_1)-G(t'-s,x',y_1)\right|  \\
                           & \times \left|G(t-s,x',y_2)-G(t'-s,x',y_2)\right| \Norm{u(s,y_1)}_p \Norm{u(s,y_2)}_p;                                        \\
    I_3(t,t',x') \coloneqq & \int_t^{t'} \ud s \iint_{\mathbb{T}^{2d}} \ud y_1 \ud y_2 \: f_{\alpha,\rho}(y_1,y_2) G(t'-s,x',y_1)                         \\
                           & \times G(t'-s,x',y_2) \Norm{u(s,y_1)}_p \Norm{u(s,y_2)}_p.
  \end{align*}
  In the following, we use $C_n$ to denote a generic constant that may depend on
  $n$ and may change its value at each occurrence.

  To control $I_1$, from the $p$-th moment formula~\eqref{E:p-Moments}, we see
  that
  \begin{align}\label{E:Holder-I_1}
    \begin{aligned}
    I_1(t,x,x') \leq
    & C_n \int_0^t \ud s \iint_{\mathbb{T}^{2d}} \ud y_1 \ud y_2\: f_{\alpha,\rho}(y_1,y_2)\iint_{\mathbb{T}^{2d}}\mu(\ud z_1) \mu(\ud z_2) \\
    & \times G(s,y_1,z_1)\left| G(t-s,x,y_1) - G(t-s,x',y_1) \right|                                                                        \\
    & \times G(s,y_2,z_2)\left| G(t-s,x,y_2) - G(t-s,x',y_2) \right|.
    \end{aligned}
  \end{align}
  We can then apply~\eqref{eq:Holder-S} to obtain that for all
  $\beta\in (0,1)$,
  \begin{align*}
    \MoveEqLeft \left|G(t-s,x,y_1) - G(t-s,x',y_1)\right| \\
    & = \left|G\left(t-s,\myMod{x-y_1}\right) - G\left(t-s,\myMod{x'-y_1}\right)\right|                                                    \\
    & \le C \left[G\left(2(t-s),\myMod{x-y_1}\right)+G\left(2(t-s),\myMod{x'-y_1}\right)\right]\frac{\myDist{x,x'}^\beta}{(t-s)^{\beta/2}} \\
    & =   C \left[G\left(2(t-s),x,y_1\right)+G\left(2(t-s),x',y_1\right)\right]\frac{\myDist{x,x'}^\beta}{(t-s)^{\beta/2}},
  \end{align*}
  where we have used the fact that
  \begin{align*}
    \myDist{\myMod{x-y_1}, \myMod{x'-y_1}}
    = \left|\myMod{\:\myMod{x-y_1} - \myMod{x'-y_1}\:}\right|
    = \left|\myMod{x-x'}\right|
    = \myDist{x,x'}.
  \end{align*}
Hence,
  \begin{align*}
    G(s,y_1,z_1)
    & \left| G(t-s,x,y_1) - G(t-s,x',y_1)\right|                                                                  \\
    & \le C G(2s,y_1,z_1)\left[G(2(t-s),x,y_1)+G(2(t-s),x',y_1)\right]\frac{\myDist{x,x'}^\beta}{(t-s)^{\beta/2}} \\
    & = C \frac{\myDist{x,x'}^\beta}{(t-s)^{\beta/2}} \left[G(2t,x,z_1) G_{2t,x,z_1}\left(2s,y_1\right) + G(2t,x',z_1) G_{2t',x',z_1}\left(2s,y_1\right) \right],
  \end{align*}
  where we have used the density for the pinned Brownian motion;
  see~\eqref{E:BMBridge-T}. Therefore, by plugging the above upper bound and the
  corresponding one with $(y_1,z_1)$ replaced by $(y_2,z_2)$ back
  to~\eqref{E:Holder-I_1}, we obtain four terms in the expansion:
  \begin{align*}
    I_1(t,x,x') \leq \sum_{k=1}^4 I_{1,k}(t,x,x').
  \end{align*}
  For $I_{1,1}$, we have
  \begin{align*}
    I_{1,1}(t,x,x') \le
    & C_n \myDist{x,x'}^{2\beta} J_0(2t,x) J_0(2t,x) \int_0^t \ud s\: \frac{1}{(t-s)^{\beta}} \\
    & \times \iint_{\mathbb{T}^{2d}}\ud y_1\ud y_2 \: f_{\alpha,\rho}(y_1,y_2) G_{2t,x,z_1}\left(2s,y_1\right)  G_{2t,x,z_1}\left(2s,y_2\right).
  \end{align*}
  Now an application of Lemma~\ref{L:BMBridge} shows that
  \begin{align*}
    I_{1,1}(t,x,x') \le
    & C_n \myDist{x,x'}^{2\beta} J_0(2t,x) J_0(2t,x) \int_0^t \ud s\: \frac{1}{(t-s)^{\beta}} \\
    & \times \iint_{\mathbb{T}^{2d}}\ud y_1\ud y_2 \: f_{\alpha,\rho}(y_1,y_2) \sum_{k,k'\in\Pi^d} p_{2t,x,z_1+k}\left(2s,y_1\right) p_{2t,x,z_1+k'}\left(2s,y_2\right).
  \end{align*}
  By the same arguments as the Step~2 of the proof of Theorem~\ref{T:Main}, we
  obtain that
  \begin{align*}
    I_{1,1}(t,x,x')
    \le & C_n \myDist{x,x'}^{2\beta} J_0^2(2t,x) \int_0^t \ud s\: \frac{1}{(t-s)^{\beta}} \iint_{\R^{2d}}\ud y_1\ud y_2 \: f_{\alpha,\rho}^*(y_1-y_2+k'') \\
        & \times \sum_{k,k',k''\in\Pi^d} p_{2t,x,z_1+k}\left(2s,y_1\right) p_{2t,x,z_1+k'}\left(2s,y_2\right)                                               \\
    \le & C_n \myDist{x,x'}^{2\beta} J_0^2(2t,x) \int_0^t \frac{k_2(s)}{s^{\beta}} \ud s                                                                  \\
     =  & C_n \myDist{x,x'}^{2\beta} J_0^2(2t,x) \int_0^t s^{\alpha-d/2 -\beta}\ud s,
  \end{align*}
  where the last step is due to~\eqref{E:k2}. Therefore, provided that $\beta <
  1+\alpha -d/2 \in(0,1)$ (recall that $\alpha\in (0,d/2)$), we have
  \begin{align*}
    I_{1,1}(t,x,x') \le C_n \myDist{x,x'}^{2\beta} J_0^2(2t,x).
  \end{align*}
  The computations for the $I_{1,2}, I_{1,3}, I_{1,4}$ are similar. Combining
  all these bounds, we conclude that
  \begin{align*}
    I_1(t,x,x') \le C_n \left( J_0(2t,x) + J_0(2t,x') \right)^2 \myDist{x,x'}^{2\beta}.
  \end{align*}
  The proof for the time increments, namely, $I_2$ and $I_3$, is similar, which
  will be omitted here.
\end{proof}

\appendix

\section{Proof of Lemma~\ref{L:G-bdd}}\label{A:G-bdd}

\begin{proof}[Proof of Lemma~\ref{L:G-bdd}]
  Noticing that
  \begin{align*}
    \frac{G_d(t,x)}{p_{d}(t,x)} = \prod_{i=1 }^{d}  \frac{G_1(t,x_i)}{p_1(t,x_i)},
  \end{align*}
  to prove~\eqref{E:G-bdd}, we only need to prove the case $d=1$. In the
  following, we assume that $d=1$. It is known that the fundamental solution to
  the one-dimensional heat equation on a compact set can be expressed using the
  \textit{Theta functions} $\theta_i(\cdot)$, $i=1,2,3,4$. In particular, when
  the periodic boundary condition is imposed, the fundamental solution can be
  expressed using the $\theta_3$ function (see (20.2.3) on p.~524
  of~\cite{olver.lozier.ea:10:nist}):
  \begin{align}\label{E:theta-q}
    \theta_3(z,q) \coloneqq 1+ 2 \sum_{n=1}^{\infty} q^{n^2} \cos\left(2n z\right),
    \quad \text{for $z\in \mathbb{C}$ and $q\in (0,1)$}.
  \end{align}
  There is another parameterization using $\tau \in \mathbb{C}$ with
  $\text{Im}(\tau)>0$:
  \begin{align}\label{E:theta-tau}
    \theta_3(z|\tau) \coloneqq \theta_3\left(z, q^{\pi i \tau}\right).
  \end{align}
  From (20.13.4) on p.~534 ({\it ibid.}), the reflection formula (20.7.32) on
  p.~531 ({\it ibid.}), and the relation between the two parameterizations
  in~\eqref{E:theta-tau}, we see that the fundamental solution $G(t,x)$ can be
  written in terms of $\theta_3$ in two equivalent ways (each having two
  parameterizations):
  \begin{alignat}{2}\label{E:G-theta}
    G(t,x) & = \frac{1}{2\pi}\theta_3\left( \frac{x}{2}, e^{-\frac{t}{2}}\right)   & = & \frac{1}{\sqrt{2\pi t}} e^{-\frac{x^2}{2t}} \theta_3\left( i \frac{\pi x}{t}, e^{-\frac{2\pi^2}{t}}\right) \\
           & =\frac{1}{2\pi}\theta_3\left(\frac{x}{2}\bigg| \frac{it}{2\pi}\right) & = & \frac{1}{\sqrt{2\pi t}} e^{-\frac{x^2}{2t}} \theta_3\left( i \frac{\pi x}{2}\bigg| \frac{2 \pi i}{t}\right).
  \end{alignat}
  Now applying the definition of $\theta_3$ in~\eqref{E:theta-q} to the two
  expressions of $G(t,x)$ in~\eqref{E:G-theta}, we see that
  \begin{align}\label{E:G-1}
    G(t,x) & = \frac{1}{2\pi} + \frac{1}{\pi}\sum_{n=1}^{\infty} e^{-\frac{n^2t}{2}} \cos\left(nx\right) \\
           & = \frac{1}{\sqrt{2\pi t}} e^{-\frac{x^2}{2t}} \left(1+2 \sum_{n=1}^{\infty} e^{-\frac{2\pi^2n^2}{t}} \cosh\left(\frac{\pi n x}{t}\right)\right). \label{E:G-2}
  \end{align}

  From~\eqref{E:G-2}, we see that $G(t,x)$ is bounded from below by
  \begin{align*}
    G(t,x)
    \ge \frac{1}{\sqrt{2\pi t}} e^{-\frac{x^2}{2t}} \left(1+2 \sum_{n=1}^{\infty} e^{-\frac{2\pi^2n^2}{t}} \right)
    = p_1(t,x)\: \theta_3\left(0,e^{-\frac{2\pi^2}{t}}\right)
    \eqqcolon  p_1(t,x)\: C_t.
  \end{align*}
  where $C_t \coloneqq \theta_3\left(0,e^{-(2\pi^2)/t}\right)$. It is clear that
  $C_t$ is monotone increasing in $t$. The product forms of $C_t$
  in~\eqref{E:Ct} is due to (20.4.4) on p.~529 ({\it ibid.}). By setting $x=0$
  in~\eqref{E:G-theta}, we see that $C_t$ can be equivalently written in two
  ways:
  \begin{align}\label{E:theta-0}
    C_t = \theta_3\left(0,e^{-\frac{2\pi^2}{t}}\right)
        = \sqrt{\frac{t}{2\pi}} \theta_3\left(0, e^{-\frac{t}{2}}\right).
  \end{align}
  As for the upper bound, since $x\in (-\pi,\pi)$, from~\eqref{E:G-2}, we see
  that
  \begin{align*}
    G(t,x) & \le \frac{1}{\sqrt{2\pi t}} e^{-\frac{x^2}{2t}} \left(1+ \sum_{n=1}^{\infty}     e^{-\frac{2\pi^2n^2}{t}} \left(e^{\frac{\pi^2 n}{t}} + 1\right) \right) \\
           & =   p_1(t,x) \:\left(1+ \sum_{n=1}^{\infty} \left( e^{-\frac{2\pi^2n(n-1/2)}{t}} + e^{-\frac{2\pi^2n^2}{t}} \right) \right)                              \\
           & \le p_1(t,x) \:\left(1+ \sum_{n=1}^{\infty} \left( e^{-\frac{2\pi^2(n-1)^2 }{t}} + e^{-\frac{2\pi^2n^2}{t}} \right) \right)                              \\
           & \le 2p_1(t,x)\:\left(1+ \sum_{n=1}^{\infty} e^{-\frac{2\pi^2n^2}{t}} \right)
             =   2p_1(t,x)\:C_t.
  \end{align*}
  As for the asymptotics of $C_t$ in~\eqref{E:Ct-Asym}, from the product form
  $C_t$ in~\eqref{E:Ct}, we see that as $t\to 0$,
  \begin{align*}
    \log\left(\theta_3\left(0,e^{-\frac{2\pi^2}{t}}\right)\right)
    & =    \sum_{n=1}^{\infty} \left(\log\left(1-e^{-\frac{4n\pi^2}{t}}\right) + 2 \log\left(1+e^{-\frac{2(2n-1)\pi^2}{t}}\right)\right) \\
    & \sim \sum_{n=1}^{\infty} \left(-e^{-\frac{4n\pi^2}{t}} + 2 e^{-\frac{2(2n-1)\pi^2}{t}}\right) \sim  2 e^{-(2\pi^2)/t}.
  \end{align*}
  and from~\eqref{E:theta-0}, as $t\to \infty$,
  \begin{align}\label{E_:theta-Inf}
    \log\left(\sqrt{\frac{2\pi}{t}}\theta_3\left(0,e^{-\frac{2\pi^2}{t}}\right)\right)
    & = \log\left(\theta_3\left(0,e^{-\frac{t}{2}}\right)\right)\notag                                                  \\
    & = \sum_{n=1}^{\infty} \left(\log\left(1-e^{-nt}\right) + 2 \log\left(1+e^{-\frac{(2n-1)t}{2}}\right)\right)\notag \\
    & \sim \sum_{n=1}^{\infty} \left(-e^{-nt} + 2 e^{-\frac{(2n-1)t}{2}}\right)  \sim 2 e^{- t/2}.
  \end{align}

  Regarding the bounds in~\eqref{E:Ct-bdd}, the lower one is due to equalities $s$
  and $s'$ in~\eqref{E:Ct}. As for the upper bound, from equality $s'$
  in~\eqref{E:Ct}, we see that
  \begin{align*}
    C_t = \sqrt{\frac{t}{2\pi}} \left(1+ 2 \sum_{n=1}^{\infty} e^{-n^2t/2} \right)
        \le \sqrt{\frac{t}{2\pi}} \left(1+ 2 \int_0^{\infty} e^{-x^2 t/2}\ud x \right)
        =1 + \sqrt{\frac{t}{2\pi}},
  \end{align*}
  where we have used the Riemann integral to approximate the sum. \bigskip

  As for~\eqref{E:G-UnifBd}, by bounding $\cos(\cdot)$ by one in~\eqref{E:G-1}, we
  see that
  \begin{align*}
    G_1(t,x)
    \le \theta_3\left(0,e^{-t/2}\right)
    = \sqrt{\frac{2\pi}{t}}\theta_3\left(0,e^{-2\pi^2/t}\right)
    = \sqrt{\frac{2\pi}{t}} C_t,
  \end{align*}
  where we have applied~\eqref{E:theta-0} in the above equalities. Then raising
  the above inequality to the power $d$ proves the first inequality
  in~\eqref{E:G-UnifBd}. The second one is due to~\eqref{E:Ct-bdd}.

  As for~\eqref{E:G-Inf}, fix an arbitrary $\epsilon>0$. From~\eqref{E:G-1}, we
  see that
  \begin{align}\label{E_:G-Inf}
    \sup_{x\in \mathbb{T}}\left\lvert G_1(t,x) - \frac{1}{2\pi}\right\rvert
    \le \frac{1}{\pi}\sum_{n=1}^{\infty} e^{-\frac{n^2t}{2}}
    = \frac{1}{2\pi}\left(\theta_3\left(0,e^{-t/2}\right)-1\right)
    = \frac{1}{2\pi}\left(\sqrt{\frac{2\pi}{t}} C_t-1\right),
  \end{align}
  where we have used the identity in~\eqref{E:theta-0} in the last equality.
  From the asymptotics in~\eqref{E_:theta-Inf}, we see that
  $\log\left(\sqrt{\frac{2\pi}{t}} C_t\right) = 2e^{-t/2} +
  o\left(e^{-t/2}\right)$ for large $t$. Since the function $t\to
  \sqrt{\frac{2\pi}{t}} C_t$ is continuous on $[\epsilon,\infty]$, there exists
  some constant $\Lambda_\epsilon\ge 2$ such that
  $\log\left(\sqrt{\frac{2\pi}{t}} C_t\right)\le \Lambda_\epsilon e^{-t/2}$ for
  all $t\ge \epsilon$. Therefore,
  \begin{align*}
    \sqrt{\frac{2\pi}{t}} C_t-1
    \le e^{\Lambda_\epsilon e^{-t/2}} -1
    \le    \Lambda_\epsilon e^{-t/2} e^{\Lambda_\epsilon  e^{-t/2}}
    \le    \Lambda_\epsilon e^{\Lambda_\epsilon} e^{-t/2}, \quad \text{for all $t\ge \epsilon$,}
  \end{align*}
  which proves~\eqref{E:G-Inf} with $\Theta_{\epsilon,1} \coloneqq
  \Lambda_\epsilon e^{\Lambda_\epsilon}$. For the case $d\ge 2$, notice that
  \begin{align*}
    \left\lvert G_d(t,x) - \left(\frac{1}{2\pi}\right)^d\right\rvert
    & \le \sum_{i=1}^d \frac{1}{(2\pi)^{i-1}} \left\lvert G_1(t,x_i)- \frac{1}{2\pi} \right\rvert \times \prod_{j=i+1}^{d} G_1(t,x_j) \\
    & \le \sum_{i=1}^d \frac{1}{(2\pi)^{i-1}} \left\lvert G_1(t,x_i)- \frac{1}{2\pi} \right\rvert \times \left(1+\sqrt{\frac{2\pi}{t}}\right)^{d-i},
  \end{align*}
  where we use the convention that $\prod_{\emptyset} \equiv 1$
  and~\eqref{E:G-UnifBd}. Therefore, an application of~\eqref{E_:G-Inf} shows
  that for all $t\ge \epsilon$,
  \begin{align*}
    \sup_{x\in \mathbb{T}^d}\left\lvert G_d(t,x) - \left(\frac{1}{2\pi}\right)^d\right\rvert
    \le \Theta_{\epsilon,1} e^{-t/2} \sum_{i=1}^d \frac{1}{(2\pi)^{i-1}} \left(1+\sqrt{\frac{2\pi}{\epsilon}}\right)^{d-i}.
  \end{align*}
  Hence, \eqref{E:G-Inf} is proved with
  $\Theta_{\epsilon,d}\coloneqq\Theta_{\epsilon,1}\sum_{i=1}^d
  \frac{1}{(2\pi)^{i-1}} \left(1+\sqrt{(2\pi)/\epsilon}\right)^{d-i}.$ This
  completes the whole proof of Lemma~\ref{L:G-bdd}.
\end{proof}

\section*{Acknowledgments}
L.~C. is partially supported by NSF grant DMS-2246850. Both L.~C. (\#~959981)
and C.~O. (\#851792) are partially supported by the collaboration grant from
Simons foundation.

\printbibliography[title = {References}]

@ARTICLE{balan.chen:18:parabolic,
  AUTHOR = {Balan, Raluca M. and Chen, Le},
  URL = {https://doi.org/10.1007/s10959-017-0772-2},
  DATE = {2018},
  DOI = {10.1007/s10959-017-0772-2},
  ISSN = {0894-9840},
  JOURNALTITLE = {J. Theoret. Probab.},
  NUMBER = {4},
  PAGES = {2216--2265},
  TITLE = {Parabolic {A}nderson model with space-time homogeneous {G}aussian noise and rough initial condition},
  VOLUME = {31},
}

@ARTICLE{baudoin.ouyang.ea:22:parabolic,
  AUTHOR = {Baudoin, Fabrice and Ouyang, Cheng and Tindel, Samy and Wang, Jing},
  URL = {http://arXiv.org/abs/2206.14139},
  DATE = {2022-06},
  JOURNALTITLE = {preprint arXiv:2206.14139},
  TITLE = {Parabolic Anderson model on Heisenberg groups: the Itô setting},
}

@ARTICLE{baxter.brosamler:76:energy,
  AUTHOR = {Baxter, J. R. and Brosamler, G. A.},
  URL = {https://doi.org/10.7146/math.scand.a-11622},
  DATE = {1976},
  DOI = {10.7146/math.scand.a-11622},
  ISSN = {0025-5521},
  JOURNALTITLE = {Math. Scand.},
  NUMBER = {1},
  PAGES = {115--136},
  TITLE = {Energy and the law of the iterated logarithm},
  VOLUME = {38},
}

@ARTICLE{brosamler:83:laws,
  AUTHOR = {Brosamler, G. A.},
  URL = {https://doi.org/10.1007/BF00534997},
  DATE = {1983},
  DOI = {10.1007/BF00534997},
  ISSN = {0044-3719},
  JOURNALTITLE = {Z. Wahrsch. Verw. Gebiete},
  NUMBER = {1},
  PAGES = {99--114},
  TITLE = {Laws of the iterated logarithm for {B}rownian motions on compact manifolds},
  VOLUME = {65},
}

@ARTICLE{candil.chen.ea:23:parabolic,
  AUTHOR = {Candil, David and Chen, Le and Lee, Cheuk Yin},
  URL = {http://arXiv.org/abs/2301.06435},
  DATE = {2023-01},
  JOURNALTITLE = {preprint arXiv:2301.06435, to appear in Stoch. Partial Differ. Equ. Anal. Comput.},
  TITLE = {Parabolic stochastic PDEs on bounded domains with rough initial conditions: moment and correlation bounds},
}

@ARTICLE{chen.dalang:15:moments,
  AUTHOR = {Chen, Le and Dalang, Robert C.},
  URL = {https://doi.org/10.1007/s40072-015-0054-x},
  DATE = {2015},
  DOI = {10.1007/s40072-015-0054-x},
  ISSN = {2194-0401},
  JOURNALTITLE = {Stoch. Partial Differ. Equ. Anal. Comput.},
  NUMBER = {3},
  PAGES = {360--397},
  TITLE = {Moments, intermittency and growth indices for the nonlinear fractional stochastic heat equation},
  VOLUME = {3},
}

@ARTICLE{chen.huang:19:comparison,
  AUTHOR = {Chen, Le and Huang, Jingyu},
  URL = {https://doi.org/10.1214/18-AOP1277},
  DATE = {2019},
  DOI = {10.1214/18-AOP1277},
  ISSN = {0091-1798},
  JOURNALTITLE = {Ann. Probab.},
  NUMBER = {2},
  PAGES = {989--1035},
  TITLE = {Comparison principle for stochastic heat equation on {$\Bbb R^d$}},
  VOLUME = {47},
}

@ARTICLE{chen.kim:19:nonlinear,
  AUTHOR = {Chen, Le and Kim, Kunwoo},
  URL = {https://doi.org/10.1007/s10473-019-0303-6},
  DATE = {2019},
  DOI = {10.1007/s10473-019-0303-6},
  ISSN = {0252-9602},
  JOURNALTITLE = {Acta Math. Sci. Ser. B (Engl. Ed.)},
  NUMBER = {3},
  PAGES = {645--668},
  TITLE = {Nonlinear stochastic heat equation driven by spatially colored noise: moments and intermittency},
  VOLUME = {39},
}

@ARTICLE{cosco.seroussi.ea:21:directed,
  AUTHOR = {Cosco, Clément and Seroussi, Inbar and Zeitouni, Ofer},
  URL = {https://doi.org/10.1007/s00220-021-04034-w},
  DATE = {2021},
  DOI = {10.1007/s00220-021-04034-w},
  ISSN = {0010-3616},
  JOURNALTITLE = {Comm. Math. Phys.},
  NUMBER = {1},
  PAGES = {395--432},
  TITLE = {Directed polymers on infinite graphs},
  VOLUME = {386},
}

@ARTICLE{dalang:99:extending,
  AUTHOR = {Dalang, Robert C.},
  URL = {https://doi.org/10.1214/EJP.v4-43},
  DATE = {1999},
  DOI = {10.1214/EJP.v4-43},
  ISSN = {1083-6489},
  JOURNALTITLE = {Electron. J. Probab.},
  PAGES = {no. 6,29},
  TITLE = {Extending the martingale measure stochastic integral with applications to spatially homogeneous s.p.d.e.'s},
  VOLUME = {4},
}

@ARTICLE{foondun.khoshnevisan:13:on,
  AUTHOR = {Foondun, Mohammud and Khoshnevisan, Davar},
  URL = {https://doi.org/10.1090/S0002-9947-2012-05616-9},
  DATE = {2013},
  DOI = {10.1090/S0002-9947-2012-05616-9},
  ISSN = {0002-9947},
  JOURNALTITLE = {Trans. Amer. Math. Soc.},
  NUMBER = {1},
  PAGES = {409--458},
  TITLE = {On the stochastic heat equation with spatially-colored random forcing},
  VOLUME = {365},
}

@ARTICLE{gelbaum:14:fractional,
  AUTHOR = {Gelbaum, Zachary A.},
  URL = {https://doi.org/10.1090/S0002-9947-2014-06106-0},
  DATE = {2014},
  DOI = {10.1090/S0002-9947-2014-06106-0},
  ISSN = {0002-9947},
  JOURNALTITLE = {Trans. Amer. Math. Soc.},
  NUMBER = {9},
  PAGES = {4781--4814},
  TITLE = {Fractional {B}rownian fields over manifolds},
  VOLUME = {366},
}

@ARTICLE{hu.nualart.ea:11:feynman-kac,
  AUTHOR = {Hu, Yaozhong and Nualart, David and Song, Jian},
  URL = {https://doi.org/10.1214/10-AOP547},
  DATE = {2011},
  DOI = {10.1214/10-AOP547},
  ISSN = {0091-1798},
  JOURNALTITLE = {Ann. Probab.},
  NUMBER = {1},
  PAGES = {291--326},
  TITLE = {Feynman-{K}ac formula for heat equation driven by fractional white noise},
  VOLUME = {39},
}

@ARTICLE{mayorcas.singh:23:singular,
  AUTHOR = {Mayorcas, Avi and Singh, Harprit},
  URL = {http://arXiv.org/abs/2301.05121},
  DATE = {2023-01},
  JOURNALTITLE = {preprint arXiv:2301.05121},
  TITLE = {Singular SPDEs on Homogeneous Lie Groups},
}

@BOOK{olver.lozier.ea:10:nist,
  AUTHOR = {Olver, Frank W. J. and Lozier, Daniel W. and Boisvert, Ronald F. and Clark, Charles W.},
  PUBLISHER = {U.S. Department of Commerce, National Institute of Standards and Technology, Washington, DC; Cambridge University Press, Cambridge},
  DATE = {2010},
  ISBN = {978-0-521-14063-8},
  NOTE = {With 1 CD-ROM (Windows, Macintosh and UNIX)},
  PAGES = {xvi+951},
  TITLE = {N{IST} handbook of mathematical functions},
}

@INCOLLECTION{walsh:86:introduction,
  AUTHOR = {Walsh, John B.},
  PUBLISHER = {Springer, Berlin},
  URL = {https://doi.org/10.1007/BFb0074920},
  BOOKTITLE = {École d'été de probabilités de {S}aint-{F}lour, {XIV}---1984},
  DATE = {1986},
  DOI = {10.1007/BFb0074920},
  PAGES = {265--439},
  SERIES = {Lecture Notes in Math.},
  TITLE = {An introduction to stochastic partial differential equations},
  VOLUME = {1180},
}
\end{document}